\renewcommand{\section}{\@startsection%
  {section}
  {1}
  {0em}
  {\baselineskip}
  {0.5\baselineskip}
  {\normalfont\large\bfseries}}
\renewcommand{\subsection}{\@startsection%
  {subsection}
  {2}
  {0em}
  {\baselineskip}
  {0.25\baselineskip}
  {\normalfont\bfseries}
}
\newtheorem{thm}{Theorem}[section]
\newtheorem{lem}[thm]{Lemma}
\newtheorem{prop}[thm]{Proposition}
\newtheorem{conj}[thm]{Conjecture}
\theoremstyle{definition}
\newtheorem{defn}[thm]{Definition}
\newtheorem{rem}[thm]{Remark}
\newtheorem{ex}[thm]{Example}
\newtheorem*{rem*}{Remark}
\newcommand\independent{\protect\mathpalette{\protect\independenT}{\perp}}
\def\independenT#1#2{\mathrel{\rlap{$#1#2$}\mkern2mu{#1#2}}}
\begin{document}

\title{\bf Phase Transitions in Nonlinear Filtering\thanks{Supported
in part by NSF grant CAREER-DMS-1148711 and by the ARO through a
PECASE award.}}

\author{Patrick Rebeschini\thanks{Yale University, 17 Hillhouse Avenue, 
New Haven, CT 06520. E-mail: patrick.rebeschini@yale.edu} \and Ramon van 
Handel\thanks{Sherrerd Hall, Princeton University,    
Princeton, NJ 08544. E-mail: rvan@princeton.edu}}

\date{}

\maketitle

\begin{abstract} 
It has been established under very general conditions that the ergodic 
properties of Markov processes are inherited by their conditional 
distributions given partial information.  While the existing theory 
provides a rather complete picture of classical filtering models, many 
infinite-dimensional problems are outside its scope.  Far from being a 
technical issue, the infinite-dimensional setting gives rise to surprising 
phenomena and new questions in filtering theory.  The aim of this paper is 
to discuss some elementary examples, conjectures, and general theory that 
arise in this setting, and to highlight connections with problems in 
statistical mechanics and ergodic theory.  In particular, we exhibit a 
simple example of a uniformly ergodic model in which ergodicity of the 
filter undergoes a phase transition, and we develop some qualitative 
understanding as to when such phenomena can and cannot occur.
We also discuss closely related problems in the setting of conditional 
Markov random fields.

\smallskip\smallskip\noindent
{\footnotesize\textbf{Keywords:} filtering in infinite dimension; 
conditional ergodicity \& mixing; phase transitions.\\
\textbf{AMS MSC 2010:} 37A50; 60G35; 60K35; 82B26; 82B44.}
\end{abstract}

\smallskip

\section{Introduction}

Let $(X_k,Y_k)_{k\ge 0}$ be a bivariate Markov chain.  Such a model
represents the setting of partial information: it is presumed that only 
$(Y_k)_{k\ge 0}$ can be observed, while $(X_k)_{k\ge 0}$ defines the 
unobserved dynamics.  In order to understand the behavior of the unobserved 
process given the observations, it is natural to `lift' the unobserved 
dynamics to the level of conditional distributions, that is, to 
investigate the nonlinear filter
$$
	\pi_k := \mathbf{P}[X_k\in\,\cdot\,|Y_1,\ldots,Y_k].
$$
Under standard assumptions on the observation structure (cf.\ section 
\ref{sec:model}), the process $(\pi_k)_{k\ge 0}$ is 
itself a measure-valued Markov chain.  The fundamental question that arises in 
this setting is to understand in what manner the probabilistic structure of 
the model $(X_k,Y_k)_{k\ge 0}$ `lifts' to the conditional distributions 
$(\pi_k)_{k\ge 0}$.

Of particular interest in this context is the behavior of ergodic properties 
under conditioning.  It is natural to suppose that the ergodic properties of 
$(X_k,Y_k)_{k\ge 0}$ will be inherited by the filter $(\pi_k)_{k\ge 0}$: for 
example, if $X_k$ forgets its initial condition as $k\to\infty$, then the 
optimal mean-square estimate of $X_k$ (and therefore the filter $\pi_k$) 
should intuitively possess the same property.  Such a conclusion was already 
conjectured by Blackwell as early as 1957 \cite{Bla57}, and a proof was 
provided by Kunita in 1971 \cite{Kun71}.  Unfortunately, both the proof and 
the conclusion are erroneous: it is elementary to construct a 
finite-state Markov chain $(X_k,Y_k)_{k\ge 0}$ that is 1-dependent 
(as strong an ergodic property as one could hope for) 
with observations of the form $Y_k=h(X_{k-1},X_k)$ such 
that the corresponding filtering process $(\pi_k)_{k\ge 0}$ is nonergodic, 
see Example \ref{ex:bla} below.\footnote{
	Surprisingly, the counterexample (intended for a different purpose)
	appears in Blackwell's own paper \cite{Bla57}.
}

Despite the appearance of counterexamples already in the most elementary 
setting, recent advances have provided a surprisingly complete picture of 
such problems in a general setting.  On the one hand, it has been shown 
under very general assumptions \cite{vH09c,TvH13} that ergodicity of the 
underlying model is inherited by the filter when the observations are 
\emph{nondegenerate}, that is, when the conditional law of each 
observation $\mathbf{P}[Y_k\in\,\cdot\,|X]$ has a positive density with 
respect to some fixed reference measure.  This is a mild condition in 
classical filtering models that serves mainly to rule out the singular 
case of noiseless observations: for example, the addition of any 
observation noise to the above counterexample would render the filter 
ergodic.  On the other hand, even in the noiseless case, ergodicity is 
inherited in the absence of certain symmetries that are closely related to 
systems-theoretic notions of \emph{observability} \cite{vH08, vH09, vH09b, 
CvH10}.  One can therefore conclude that while there exist elementary 
examples where the ergodicity of the model fails to be inherited by the 
filter, such examples must be very fragile as they require both a singular 
observation structure and the presence of unusual symmetries, either of 
which is readily broken by a small perturbation of the model.

The theory outlined above provides a satisfactory understanding of 
conditional ergodicity in classical filtering models.  Some care must be 
taken, however, in interpreting this conclusion.  The ubiquitous 
applicability of the theory hinges on the notion that most filtering 
models possess observation densities, an assumption made almost 
universally in the filtering literature (cf.\ \cite{CR11} and the 
references therein).  This assumption is largely innocuous in 
finite-dimensional systems.  The situation is entirely different in 
infinite dimension, where singularity of probability measures is the norm.  
There exists almost no mathematical literature on filtering in infinite 
dimension, despite the substantial practical importance of 
infinite-dimensional filtering models in data assimilation problems that 
arise in areas such as weather forecasting or geophysics \cite{Stu10}.  
The aim of this paper is to draw attention to the fact that, far from 
being a technical issue, the infinite-dimensional setting gives rise to 
new probabilistic phenomena and questions in filtering theory that are 
fundamentally different than those that have been studied in the 
literature to date, and whose understanding remains limited.

\begin{rem}
The singularity of measures in infinite dimension is already a significant 
hurdle in the ergodic theory of infinite-dimensional Markov processes, as 
it precludes application of standard methods based on Harris recurrence 
(e.g., \cite{Mat07}).  An infinite-dimensional unobserved process is not, 
however, the primary source of the problems considered in the sequel: as 
long as the observations are nondegenerate, it was shown in \cite{TvH13} 
that ergodicity is still inherited by the filter.  The problem is that 
nondegeneracy is a restrictive assumption on the observations in infinite 
dimension: it implies that the observations must be effectively 
finite-dimensional \cite[Remark 5.20]{TvH13}.  It is in models were the 
observations are genuinely infinite-dimensional that new phenomena appear.
\end{rem}

To model a filtering problem in infinite dimension, we suppose that 
$(X_k,Y_k)_{k\ge 0}$ is a Markov chain in the product state space 
$E^V\times F^V$, where $E,F$ are local state spaces and $V$ is a countably 
infinite set of sites (for concreteness, we fix $V=\mathbb{Z}^d$ 
throughout). Each element of $V$ should be viewed as a single dimension of 
the model.  A more practical interpretation is that $V$ defines a spatial 
degree of freedom and that $(X_k,Y_k)_{k\ge 0}$ describes the dynamics of 
a time-varying random field, as is the case in data assimilation 
applications.  In accordance with this interpretation, we will assume that 
the dynamics of the state $X_k$ and the observations $Y_k$ are local in 
nature: that is, the conditional distributions of the local state $X_k^v$ 
given the previous state $X_{k-1}$, and of the local observation $Y_k^v$ 
given the underlying process $X$, depend only on $X_{k-1}^w$ and $X_k^w$ 
for sites $w\in V$ that are neighbors of $v$.  In essence, our basic model 
therefore consists of an infinite family of local filtering models 
$(X_k^v,Y_k^v)_{k\ge 0}$ whose dynamics are locally coupled according to 
the graph structure of $V=\mathbb{Z}^d$.  The general model is described 
in detail in section \ref{sec:model}.

In section \ref{sec:phtrans} we investigate the natural 
infinite-dimensional version of Blackwell's Example \ref{ex:bla}. Recall 
that it was crucial in the finite-dimensional setting that the 
observations $Y_k=h(X_{k-1},X_k)$ are noiseless: the addition of any noise 
renders the observations nondegenerate and then ergodicity is preserved.  
This is no longer the case in infinite dimension: even if the local 
observations $Y_k^v$ are nondegenerate, the failure of the filter to 
inherit ergodicity can persist.  In fact, we observe a phase transition: 
the filter fails to be ergodic when the noise is small, but becomes 
ergodic when the noise strength exceeds a strictly positive threshold.  
The remarkable feature of this phenomenon is that no qualitative change of 
any kind occurs in the ergodic properties of the underlying model: 
$(X_k^v,Y_k^v)_{k\ge 0,v\in V}$ is a 1-dependent random field for every 
value of the noise parameter.  We are therefore in the surprising 
situation that complex ergodic behavior emerges in an otherwise trivial 
model when we consider its conditional distributions.  Such conditional 
phase transitions cannot arise in finite dimension.

The above example indicates that our intuition about inheritance of 
ergodicity, which fails in classical filtering models only in pathological 
cases, cannot be taken for granted in infinite dimension even under local 
nondegeneracy assumptions.  This raises the question as to whether there 
are situations in which the inheritance of ergodicity is guaranteed.  In 
view of the finite-dimensional theory, it is natural to conjecture that 
this might be the case under a symmetry breaking assumption. As will be 
discussed in section \ref{sec:obs}, the existing observability theory is 
not satisfactory in infinite dimension, and even in its simplest form the 
verification of this conjecture remains an open problem.  However, we will 
show in section \ref{sec:transinv} that the problem can be resolved in 
translation-invariant models for a (not entirely natural) modification of 
the usual filtering process using tools from multidimensional ergodic 
theory.  In section \ref{sec:conttime} the analogous problem will be 
considered in continuous time, where the method of section 
\ref{sec:transinv} can be extended to provide a full proof of the 
conjecture in the translation-invariant case.

In section \ref{sec:condrf} we turn our attention to the counterpart of 
the filter stability problem in the setting of Markov random fields.  
Such problems provide a simple setting for the investigation of decay of 
correlations in filtering problems, and are of interest in their own right 
as models that arise, for example, in image analysis.
Here the natural question of interest is whether the spatial mixing 
properties of random fields are inherited by conditioning on local 
observations. A direct adaptation of the example of section 
\ref{sec:phtrans} shows that the answer is negative in general.  However, 
we will show in section \ref{sec:mono} that the conditional random field 
inherits mixing properties of the underlying model when the latter 
possesses certain monotonicity properties.  This provides an entirely 
different mechanism for conditional ergodicity than in the 
observability theory of section \ref{sec:obs}.

As will rapidly become evident, the above results repeatedly exploit 
connections between filtering in infinite dimension, the statistical 
mechanics of disordered systems \cite{Bov06}, and multidimensional ergodic 
theory \cite{Con72,Geo11}.  Some further connections with other 
probabilistic problems will be discussed in the final section 
\ref{sec:disc} of this paper.

The consideration of filtering problems in infinite dimension is far from 
esoteric.  It is a long-standing problem in applied probability to develop 
efficient algorithms to compute nonlinear filters in high-dimensional 
models (current algorithms require a computational effort exponential in 
the dimension, a severe problem in data assimilation applications).  
Improved understanding of the ergodic and spatial mixing properties of the 
filter may be essential for progress on such practical problems.  In 
recent work \cite{RvH13, RvH13b}, we have shown that local filtering 
algorithms can attain dimension-free approximation errors in models that 
exhibit conditional decay of correlations.  The machinery developed in 
\cite{RvH13, RvH13b} to investigate high-dimensional filtering problems is 
complementary to the present paper: the former provides strong 
quantitative results under strong (`high-temperature') assumptions, while 
our aim here is to address fundamental issues that arise in infinite 
dimension.  Regardless of any practical implications, however, the 
investigation of emergent phenomena that arise from conditioning is 
arguably of fundamental interest to the understanding of conditional 
distributions, and provides a compelling motivation for the investigation 
of conditional phenomena in probability theory.

\section{Filtering in infinite dimension}
\label{sec:model}

The goal of this section is to set up the basic filtering problem that 
will be studied in the sequel.  We begin by defining a general 
setting for nonlinear filtering and introduce and discuss the basic 
ergodicity question in section \ref{sec:filt}.  We subsequently introduce 
our canonical infinite-dimensional filtering model in section 
\ref{sec:ihmm}.

\subsection{Nonlinear filtering and ergodicity}
\label{sec:filt}

Throughout this paper, we model dynamics with partial information as a 
Markov chain $(X_k,Y_k)_{k\ge 0}$ that has the additional property that 
its transition kernel factorizes as
$$
	\mathbf{P}[(X_k,Y_k)\in A|X_{k-1},Y_{k-1}] =
	\int \mathbf{1}_A(x,y)\,P(X_{k-1},dx)\,\Phi(X_{k-1},x,dy) 
$$ 
for given transition kernels $P$ and $\Phi$: the factorization corresponds 
to the assumption that $(X_k)_{k\ge 0}$ is a Markov chain in its own 
right, and that the observations $(Y_k)_{k\ge 0}$ are conditionally 
independent given $(X_k)_{k\ge 0}$.  Such models are frequently called 
\emph{hidden Markov models}.  While not essential for the development of 
our theory (see, e.g., \cite{TvH12} for a more general setting), the 
hidden Markov model setting is convenient mathematically and is ubiquitous 
in practice as a model of noisy observations of random dynamics.

For the time being, we assume that $X_k$ and $Y_k$ take values in an 
arbitrary Polish space (we will define a more concrete 
infinite-dimensional setting in section \ref{sec:ihmm} below).
The \emph{nonlinear filter} is defined as the regular conditional 
probability
$$
	\pi_k := \mathbf{P}[X_k\in\,\cdot\,|Y_1,\ldots,Y_k].
$$
We are interested in the question of whether $(\pi_k)_{k\ge 0}$ inherits 
the ergodic properties of the underlying dynamics $(X_k)_{k\ge 0}$.  
There are several different but closely connected ways to make this 
question precise (cf.\ Remark \ref{rem:altq} below).  For 
concreteness, we will focus attention on one particularly elementary 
formulation of this question that will serve as the guiding problem to be 
investigated throughout this paper.

We will assume in the sequel that the Markov chain $(X_k)_{k\ge 0}$ admits 
a unique invariant measure $\lambda$.  As 
$\mathbf{P}[X_k,Y_k\in\cdot\mbox{}| X_{k-1},Y_{k-1}]$ does not depend on 
$Y_{k-1}$ due to the hidden Markov structure, the invariant measure 
$\lambda$ extends uniquely to an invariant measure for the chain 
$(X_k,Y_k)_{k\ge 0}$, and we denote the unique stationary law of this 
process as $\mathbf{P}$.  By stationarity, we can assume in the 
sequel that $(X_k,Y_k)_{k\in\mathbb{Z}}$ is defined also for $k<0$.

The ergodic property of $(X_k)_{k\ge 0}$ that we 
will consider is \emph{stability} in the sense that
$$
	|\mathbf{P}[X_k\in A|X_0]-\lambda(A)| \xrightarrow{k\to\infty}0
	\quad\mbox{in }L^1
$$
for every measurable set $A$: that is, the law of $X_k$ `forgets' the 
initial condition $X_0$ as $k\to\infty$.  The analogous 
conditional property is \emph{filter stability} in the sense that
$$
	|\mathbf{P}[X_k\in A|X_0,Y_1,\ldots,Y_k]-
	\mathbf{P}[X_k\in A|Y_1,\ldots,Y_k]|\xrightarrow{k\to\infty}0
	\quad\mbox{in }L^1
$$
for every measurable set $A$: that is, the conditional distribution of 
$X_k$ given the observed data `forgets' the initial condition $X_0$ as 
$k\to\infty$.  It is natural to suppose that stability of the underlying 
dynamics will imply stability of the filter.  This conclusion is 
incorrect, however, as is illustrated by the following classical example 
\cite{Bla57}.

\begin{ex}
\label{ex:bla}
Let $(X_k)_{k\ge 0}$ be an i.i.d.\ sequence of random variables with
$\mathbf{P}[X_k=1]=\mathbf{P}[X_k=-1]=1/2$, and let $Y_k=X_kX_{k-1}$
for $k\ge 1$.  This evidently defines a stationary hidden Markov model 
with $P(x,\cdot)=(\delta_1+\delta_{-1})/2$ and $\Phi(x',x,\cdot)=
\delta_{xx'}$.  Note that
$$
	X_k = X_0Y_1Y_2\cdots Y_k.
$$
We can therefore easily compute for every $k\ge 0$
\begin{align*}
	&\mathbf{P}[X_k=1|X_0,Y_1,\ldots,Y_k] = \mathbf{1}_{X_k=1},
	\\	
	&\mathbf{P}[X_k=1|Y_1,\ldots,Y_k] = 1/2.
\end{align*}
Thus the filter is certainly not stable.  On the other hand, underlying 
dynamics $(X_k)_{k\ge 0}$ is an i.i.d.\ sequence, and is therefore stable 
in the strongest possible sense:
$$
	\mathbf{P}[X_k\in A|X_0]=\lambda(A)\quad\mbox{for all }k\ge 1.
$$
Moreover, even the process $(X_k,Y_k)_{k\ge 0}$ is stable in the 
strongest possible sense: it is a 1-dependent sequence, so that
$\mathbf{P}[(X_k,Y_k)\in A|X_0,Y_0] = \mathbf{P}[(X_k,Y_k)\in A]$
for all $k\ge 2$.
\end{ex}

Example \ref{ex:bla} shows that the inheritance of ergodicity under 
conditioning cannot be taken for granted.  Nonetheless, the phenomenon 
exhibited here is very fragile: if the observations are perturbed by any 
noise (for example, if we set $Y_k=X_kX_{k-1}\xi_k$ with 
$\mathbf{P}[\xi_k=-1]=1-\mathbf{P}[\xi_k=1]=p$ and any $0<p<1$), the 
filter will become stable.  The inheritance of ergodicity is therefore 
apparently obstructed by the singularity of the observation kernel $\Phi$.  
To rule out such singular behavior, it is natural to require that the 
observation kernel $\Phi$ possesses a positive density with respect to 
some reference measure $\varphi$.  A model with this property is said to 
possess \emph{nondegenerate observations}.  One might now expect that 
nondegeneracy of the observations removes the obstruction to inheritance 
of ergodicity observed in Example \ref{ex:bla}. Unfortunately, this is 
still not the case in complete generality, as is demonstrated by an 
esoteric counterexample in \cite{vH12}.  However, the conclusion does hold 
if we use a stronger \emph{uniform} notion of stability.

\begin{thm}[\cite{vH09c}]
\label{thm:vH09c}
Suppose that the following hold.
\begin{enumerate}
\item The underlying dynamics is \emph{uniformly stable} in the sense
$$
	\sup_A
	|\mathbf{P}[X_k\in A|X_0]-\lambda(A)| \xrightarrow{k\to\infty}0
	\quad\mbox{in }L^1.
$$
\item The observations are \emph{nondegenerate} in the sense
$$
	\Phi(x',x,dy) = g(x',x,y)\,\varphi(dy),\qquad
	g(x',x,y)>0\mbox{ for all }x,x',y.
$$
\end{enumerate}
Then the filter is \emph{uniformly stable} in the sense
$$
	\sup_A
	|\mathbf{P}[X_k\in A|X_0,Y_1,\ldots,Y_k]-
	\mathbf{P}[X_k\in A|Y_1,\ldots,Y_k]|\xrightarrow{k\to\infty}0
	\quad\mbox{in }L^1.
$$
\end{thm}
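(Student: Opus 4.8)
The plan is to recast uniform filter stability as an ergodic-theoretic statement about the exchange of an intersection and a join of $\sigma$-fields, and to isolate precisely where each hypothesis enters. Working with the two-sided stationary process $(X_k,Y_k)_{k\in\mathbb{Z}}$, I would first use stationarity to shift the terminal time to $0$, so that the two conditional laws in the definition of filter stability become $\mathbf{P}[X_0\in\cdot\,|\,X_{-k},Y_{-k+1},\ldots,Y_0]$ and $\mathbf{P}[X_0\in\cdot\,|\,Y_{-k+1},\ldots,Y_0]$. Writing $\mathcal{F}^X_{\le n}=\sigma(X_j:j\le n)$ and $\mathcal{F}^Y_{\le 0}=\sigma(Y_j:j\le 0)$, the hidden Markov structure (conditional independence of past and future given $X_{-k}$, together with independence of the observation noises) lets me replace the conditioning in the first law, for functions of $X_0$, by the \emph{decreasing} family $\mathcal{H}_k:=\mathcal{F}^X_{\le -k}\vee\mathcal{F}^Y_{\le 0}$. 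Since $\mathcal{H}_k\downarrow\mathcal{H}_\infty:=\bigcap_k\mathcal{H}_k$ while $\sigma(Y_{-k+1},\ldots,Y_0)\uparrow\mathcal{F}^Y_{\le 0}$, martingale convergence of regular conditional probabilities in total variation reduces uniform filter stability to the single identity
$$
	\mathbf{P}[X_0\in\cdot\,|\,\mathcal{H}_\infty]=\mathbf{P}[X_0\in\cdot\,|\,\mathcal{F}^Y_{\le 0}]\qquad\text{a.s.}
$$

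This identity would follow from two ingredients. First, I would show that uniform stability forces the unconditional past tail $\mathcal{F}^X_{-\infty}:=\bigcap_k\mathcal{F}^X_{\le -k}$ to be $\mathbf{P}$-trivial: indeed $\mathbf{E}[f(X_0)\,|\,\mathcal{F}^X_{\le -k}]=\mathbf{E}[f(X_0)\,|\,X_{-k}]=(P^{k}f)(X_{-k})$ converges to $\lambda f$ in $L^1$ by hypothesis~(1), while by reverse martingale convergence it converges to $\mathbf{E}[f(X_0)\,|\,\mathcal{F}^X_{-\infty}]$, so the latter is a constant for every bounded $f$ and triviality follows by stationarity. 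Second, and this is the crux, I would establish the exchange
$$
	\mathcal{H}_\infty=\bigcap_k\big(\mathcal{F}^X_{\le -k}\vee\mathcal{F}^Y_{\le 0}\big)=\mathcal{F}^X_{-\infty}\vee\mathcal{F}^Y_{\le 0}\pmod{\mathbf{P}};
$$
combined with triviality of $\mathcal{F}^X_{-\infty}$ this collapses $\mathcal{H}_\infty$ to $\mathcal{F}^Y_{\le 0}$ and yields the displayed identity.

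It is precisely the exchange of the intersection with the join that is false in general, and its failure is exactly the mechanism behind Blackwell's Example~\ref{ex:bla}: there the singular observations keep the sign of $X_0$ recoverable from the remote past for every $k$, so a nontrivial event survives in $\mathcal{H}_\infty$. Hypothesis~(2) is what rescues the exchange. By the Bayes formula, the conditional law of the signal given the observations has, on every finite window $[-n,0]$, a strictly positive density proportional to $\prod_{j=-n+1}^{0}g(X_{j-1},X_j,Y_j)$ with respect to the unconditional signal law; nondegeneracy thus makes the conditioned and unconditioned dynamics \emph{locally equivalent}, which is exactly the absolute-continuity hypothesis under which an exchange theorem of von Weizsäcker type applies and transfers the triviality of $\mathcal{F}^X_{-\infty}$ into the conditional setting.

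The main obstacle is this last step. Nondegeneracy delivers only \emph{pointwise} positivity of $g$, hence only finite-horizon equivalence with no uniform control of the accumulated likelihood, whereas the exchange is an infinite-horizon tail statement. Propagating the local equivalence to the tail $\sigma$-field, so that the window $[-n,0]$ may be sent to the remote past without the densities degenerating, is the delicate heart of the argument. That this cannot succeed under the weaker, non-uniform notion of stability, where the conclusion genuinely fails \cite{vH12}, is a strong indication that the uniform stability of hypothesis~(1) must be used in an essential way precisely to drive this tail estimate.
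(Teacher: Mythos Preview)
The paper does not contain a proof of Theorem~\ref{thm:vH09c}: the result is cited from \cite{vH09c}, and the paper merely sketches in section~\ref{sec:meas} the measure-theoretic framework in which such results live. Your reduction is exactly that framework. You correctly rewrite uniform filter stability as the identity
\[
\bigcap_k\bigl(\mathcal{F}^X_{\le -k}\vee\mathcal{F}^Y_{\le 0}\bigr)=\mathcal{F}^Y_{\le 0}\pmod{\mathbf{P}},
\]
you correctly derive triviality of $\bigcap_k\mathcal{F}^X_{\le -k}$ from hypothesis~(1), and you correctly identify that the whole problem is the exchange of the intersection with the join. This is precisely the discussion the paper gives in section~\ref{sec:meas}, including the observation that Kunita's original argument failed exactly because this exchange is not automatic.

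However, your proposal is not a proof: it is a plan that stops at the genuine difficulty. You write that nondegeneracy makes the conditioned and unconditioned dynamics ``locally equivalent'' and that ``an exchange theorem of von Weizs\"acker type applies,'' but you then immediately concede that nondegeneracy gives only finite-horizon equivalence with no uniform control, and that ``propagating the local equivalence to the tail $\sigma$-field \ldots\ is the delicate heart of the argument.'' That delicate heart is the entire content of the theorem, and you have not supplied it. The paper itself emphasizes in section~\ref{sec:meas} that the hoped-for ``local'' exchange principle is \emph{false} in general: the counterexample of \cite{vH12} shows the exchange can fail even with nondegenerate observations when only (non-uniform) stability is assumed. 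So an appeal to a generic von Weizs\"acker-type result cannot work; the argument in \cite{vH09c} must, and does, exploit the \emph{uniform} total-variation convergence in hypothesis~(1) in a specific way to push the finite-window equivalence to the tail. Your last paragraph shows you are aware of this, but awareness of where the work lies is not a substitute for doing it.
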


This result, together with the mathematical theory behind its proof (cf.\ 
section \ref{sec:meas}), provides a very general qualitative understanding 
of the inheritance of ergodicity in classical filtering models.  However, 
as will be explained below, this theory breaks down completely in 
infinite-dimensional models.  In the remainder of this paper, we will see 
that new phenomena arise in the infinite-dimensional setting.

\begin{rem}
\label{rem:altq}
The question of inheritance of ergodic properties under conditioning can 
be formulated in a number of different ways.  For concreteness, we focus 
our attention in this paper on the elementary formulation introduced 
above.  As the choice of problem is somewhat arbitrary, let us briefly 
describe a number of alternative formulations.

In the setting of stability of the filter, we have considered `forgetting' 
of the initial condition $X_0$ under the stationary measure.  Similar 
problems can be formulated, however, in a more general setting. 
Denote by $\mathbf{P}^\mu$ the law of the process
$(X_k,Y_{k})_{k\ge 0}$ with the initial distribution $X_0\sim\mu$.
A natural notion of stability is to require that 
$$
	\mathbf{P}^\mu[X_k\in\cdot\mbox{}]\xrightarrow{k\to\infty}\lambda
	\quad\mbox{for every }\mu
$$
in a suitable topology on probability measures.
If we define the filter started at $\mu$ as
$\pi_k^\mu := \mathbf{P}^\mu[X_k\in\cdot\mbox{}|Y_1,\ldots,Y_k]$,
we can now investigate the general filter stability problem
$$
	|\pi_k^\mu(f)-\pi_k^\nu(f)|\xrightarrow{k\to\infty}0
	\quad\mbox{in }L^1(\mathbf{P}^\gamma)
$$
for a suitable class of measures $\mu,\nu,\gamma$ and functions $f$.  The 
formulation that we consider in this paper corresponds to the special case 
$\nu=\lambda$ and $\mu=\gamma=\delta_x$ for $x$ outside a $\lambda$-null 
set.  Nonetheless, our formulation proves to be equivalent in a rather 
general setting to stability for general initial measures
$\mu,\nu,\gamma$, cf.\ \cite[Chapter 12]{CR11} and \cite{vH09c,TvH13}.

A different and perhaps more natural formulation dates back to Blackwell 
\cite{Bla57} and Kunita \cite{Kun71}.  Using the Markov property of the 
underlying model, it is not difficult to show that the measure-valued 
stochastic process $(\pi_k)_{k\ge 0}$ is itself a Markov chain, cf.\ 
\cite[Appendix A]{vH12}.  One can now ask whether the ergodic properties 
of the Markov chain $(X_k)_{k\ge 0}$ `lift' to ergodic properties of the 
Markov chain $(\pi_k)_{k\ge 0}$.  For example, if $(X_k)_{k\ge 0}$ admits 
a unique stationary measure, does $(\pi_k)_{k\ge 0}$ admit a unique 
stationary measure also?  Similarly, if $(X_k)_{k\ge 0}$ converges to its 
stationary measure starting from any initial condition, does the same 
property hold for $(\pi_k)_{k\ge 0}$?  Remarkably, while these questions 
appear in first instance to be quite distinct from the question of filter 
stability, such properties again prove to be equivalent in a very general 
setting to the notion of filter stability that we consider in this paper, 
cf.\ \cite{Kun71,Ste89,CvH10,CR11,vH12}.

A third formulation of inheritance of ergodicity under conditioning is 
obtained when we consider, rather than the filter, the conditional 
distribution of the entire process $X=(X_k)_{k\in\mathbb{Z}}$ given the 
infinite observation sequence $Y=(Y_k)_{k\in\mathbb{Z}}$.  Using the 
Markov property of the underlying model, it is not difficult to establish 
that $X$ is still a Markov process under the conditional distribution 
$\mathbf{P}[\,\cdot\,|Y]$, albeit time-inhomogeneous and with transition 
probabilities that depend on the realized observation sequence $Y$: that 
is, the conditional process is a Markov chain in a random environment.  
One can now ask whether the process $X$ inherits its ergodic properties 
under $\mathbf{P}$ when it is considered under the conditional 
distribution $\mathbf{P}[\,\cdot\,|Y]$.  Once again, this apparently 
distinct formulation proves to be equivalent in a general setting the 
formulation considered in this paper, a fact that is exploited heavily in 
the theory of \cite{vH09c,TvH13}.

It is now well understood that the properties described above 
are equivalent in classical filtering models.  While some of these 
arguments extend directly to the infinite-dimensional setting, others do 
not, and it remains to be investigated to what extent these 
equivalences remain valid in infinite dimension.  Nonetheless, the problem 
formulation considered here is arguably the most elementary one, and 
provides a natural starting point for the investigation of conditional 
phenomena in infinite dimension.
\end{rem}

\begin{rem} 
Even when the underlying dynamics $(X_k)_{k\ge 0}$ is not stable, it may 
be the case that the filter is stable.  For example, using the trivial 
observation model $Y_k=X_k$, the filter is stable regardless of any 
properties of the underlying model.  More generally, the filter is 
expected to be stable when the observations are `sufficiently 
informative,' which is made precise in \cite{vH08, vH09, vH09b} in terms 
of nonlinear notions of \emph{observability}.  Such results are in some 
sense the opposite of Theorem \ref{thm:vH09c}: the latter shows that 
ergodicity is inherited by the filter, while the former show that the 
filter can be ergodic regardless of ergodicity of the underlying model 
(even without nondegeneracy).  None of these results prove to be 
satisfactory in infinite dimension: it appears that a general theory 
for ergodicity of the filter will require both ergodicity of the 
underlying model and some form of observability, as will become evident
in the following sections.
\end{rem}

\subsection{The infinite-dimensional model}
\label{sec:ihmm}

The aim of this paper is to investigate conditional phenomena that arise 
in infinite dimension.  So far, no assumptions have been made on the model 
dimension: we have set up our theory in any Polish state space.  
Nonetheless, while no explicit dimensionality requirements appear, for 
example, in Theorem \ref{thm:vH09c}, the assumptions of previous results 
can typically hold only in finite-dimensional situations.  To understand 
the problems that arise in infinite dimension, and to provide a concrete 
setting for the investigation of conditional phenomena in infinite 
dimension, we presently introduce a canonical infinite-dimensional 
filtering model that will be used in the sequel.

The practical interest in infinite-dimensional filtering models stems from 
problems that have spatial in addition to dynamical structure. To model 
this situation, let us assume for concreteness that the spatial degrees of 
freedom are indexed by the infinite lattice $\mathbb{Z}^d$.  We also 
define Polish spaces $E$ and $F$ that describe the state of the model at 
each spatial location.  We now assume that $X_k$ and $Y_k$ are random 
fields that are indexed by $\mathbb{Z}^d$ and take values locally in $E$ 
and $F$, respectively, for every time $k$: that is,
$$
	X_k=(X_k^v)_{v\in\mathbb{Z}^d}
	\in E^{\mathbb{Z}^d}
	\qquad\mbox{and}\qquad
	Y_k=(Y_k^v)_{v\in\mathbb{Z}^d}\in 
	F^{\mathbb{Z}^d}.
$$
Each $v\in\mathbb{Z}^d$ should be viewed as a single `dimension' of the 
model.\footnote{
	The present setting is easily extended to the setting of more 
	general locally finite graphs and to the setting where each 
	location $v$ may possess a different local state space $E^v$.  
	Such an extension does not illuminate significantly the phenomena 
	that will be investigated in the sequel.  On the other hand, 
	a nontrivial extension of substantial interest in
        applications is to
	continuous infinite-dimensional models such as stochastic partial 
	differential equations, cf.\ \cite{Stu10}.  While we do not 
	investigate such models here, the present setting may be viewed as 
	prototypical for more complicated models of this type, see
	\cite{RvH13} for further discussion.
}
We now define a hidden Markov model that respects the spatial structure of 
the problem by assuming that both the underlying dynamics and the 
observations are \emph{local}: that is, we assume that the transition and 
observation kernels $P$ and $\Phi$ factorize as
$$
	P(x,dz) = \prod_{v\in\mathbb{Z}^d} P^v(x,dz^v),\qquad
	\Phi(x,z,dy) = \prod_{v\in\mathbb{Z}^d}\Phi^v(x,z,dy^v),
$$
where 
$$
	P^v(x,A)\quad\mbox{and}\quad
	\Phi^v(x,z,B)\quad
	\mbox{depend only on }x^w,z^w\mbox{ for }\|w-v\|\le 1.
$$
Such a model should be viewed as a hidden Markov model counterpart of 
probabilistic cellular automata \cite{LMS90} or interacting particle 
systems \cite{Lig05} that have been widely investigated in the literature
as natural models of space-time dynamics.
Alternatively, one might view such a model as an infinite collection 
$(X_k^v,Y_k^v)_{k\ge 0}$ of hidden Markov models whose dynamics and 
observations are locally coupled to their neighbors in $\mathbb{Z}^d$.

While problems of this type have been rarely considered in filtering 
theory, the infinite-dimensional model that we have formulated is in 
principle a special case of the general model described in the previous 
section.  However, its structure is such that the assumptions of a result 
such as Theorem \ref{thm:vH09c} typically cannot hold.  Let us consider, 
for example, the setting where each local observation $Y^v$ has a positive 
density of the form $\Phi^v(x,z,dy^v) = g(z^v,y^v)\,\varphi(dy^v)$, so 
that the observations are \emph{locally nondegenerate}.  Choose two values 
$e,e'\in E$ such that $g(e,\cdot)\ne g(e',\cdot)$, and define the constant 
configurations $z,z'$ as $z^v=e$ and $z^{\prime v}=e'$ for all 
$v\in\mathbb{Z}^d$. Then the measures $\Phi(x,z,\cdot)$ and 
$\Phi(x,z',\cdot)$ are two distinct laws of an infinite number of i.i.d.\ 
random variables, and are therefore mutually singular.  This immediately 
rules out the possibility that the observations are nondegenerate in the 
sense of Theorem \ref{thm:vH09c}. It is precisely this problem that lies 
at the heart of the difficulties in infinite-dimensional models: 
probability measures in infinite dimension are typically mutually 
singular, even when they admit densities locally (that is, for any 
finite-dimensional marginal).  In the absence of densities, classical 
results in filtering theory cannot be taken for granted, and the study of 
filtering in infinite dimension gives rise to fundamentally different 
problems than have been studied in the literature to date.  We initiate 
the investigation of such problems in the sequel.

\begin{rem}
The singularity of measures in infinite dimension is problematic not 
only for the nondegeneracy of observations, but also for the ergodic 
theory of Markov chains.  For example, the uniform stability property
in Theorem \ref{thm:vH09c} will rarely hold 
in infinite dimension: it is often the case that the law of $X_k$ is 
singular with respect to $\lambda$ for all $k<\infty$, which rules 
out total variation convergence (see \cite[Example 2.3]{TvH13} for a 
simple illustration).  However, this issue is surmounted in \cite{TvH13}
using a form of localization: by performing the analysis of Theorem 
\ref{thm:vH09c} locally (that is, to finite-dimensional projections of the 
original model), we can avoid the singularity of the full 
infinite-dimensional problem.  This allows to extend the conclusion of 
Theorem \ref{thm:vH09c} to a wide range of infinite-dimensional models 
with nondegenerate observations.  In practice, this implies that much of 
the classical filtering theory extends, at least in spirit, to models 
where $X_k$ is infinite-dimensional but $Y_k$ is (effectively) 
finite-dimensional.  It is only when the observations $Y_k$ are also
infinite-dimensional that new phenomena arise.
\end{rem}

\begin{rem}
Let us note that we have used the term `infinite-dimensional' to denote 
the situation where there are infinitely many independent degrees of 
freedom, which is the key issue in our setting.  The problem of dimension 
is unrelated to the linear algebraic or metric dimension of the state 
space: indeed, even each of the local state spaces $E$ and $F$ in our 
model can itself be an arbitrary Polish space.  Conversely, it is possible 
to have infinite-dimensional systems that are `effectively 
finite-dimensional' in the sense that only finitely many degrees of 
freedom carry significant information.  This is common, for example, in 
stochastic partial differential equations (see, e.g., \cite{TvH13}).

At the same time, it should be noted that even in finite-dimensional 
systems where results such as Theorem \ref{thm:vH09c} technically apply, 
the qualitative information contained in such statements may be misleading 
from the practical point of view: in finite but high-dimensional systems, 
phenomena that arise qualitatively in infinite dimension are still 
manifested in a quantitative fashion (see \cite{RvH13} for quantitative 
results and discussion on filtering in high dimension).  For example, 
if the filter is not stable for the infinite-dimensional model, it 
will often still be the case that the filter is stable for every 
finite-dimensional truncation of the model; however, the quantitative rate 
of stability will vanish rapidly as the dimension is increased.  
Conversely, if the filter is stable for the infinite-dimensional model, 
then the rate of stability of the filter for the finite-dimensional models 
will be dimension-free.  As it is ultimately the quantitative behavior of 
filtering algorithms that is of importance in practice,
the qualitative phenomena investigated here in infinite dimension can 
still provide more insight into the behavior of practical filtering 
problems in high dimension than classical results in filtering theory. 
\end{rem}

\section{A conditional phase transition}
\label{sec:phtrans}

\subsection{Model and results}

The goal of this section is to develop a simple example of the general 
infinite-dimensional setting of section \ref{sec:ihmm} where we observe 
nontrivial behavior of the inheritance of ergodicity.  This model, to be 
described presently, is a natural infinite-dimensional variation on 
Blackwell's counterexample (Example \ref{ex:bla} above).

Throughout this section, 
$$
	X_k=(X_k^v)_{v\in\mathbb{Z}}\in \{-1,1\}^\mathbb{Z}
	\qquad\mbox{and}\qquad
	Y_k=(\bar Y_k^v,\hat Y_k^v)_{v\in\mathbb{Z}}\in 
	(\{-1,1\}\times\{-1,1\})^\mathbb{Z}
$$
are binary random fields in one spatial dimension.  We let
$$
	(X_k^v)_{k,v\in\mathbb{Z}}\text{ are i.i.d.\ with }
	\mathbf{P}[X_k^v=1]=1/2,
$$
and we let
$$
	\bar Y_k^v = X_k^vX_{k-1}^v\bar\xi_k^v,\qquad\quad
	\hat Y_k^v = X_k^vX_k^{v+1}\hat\xi_k^v,	
$$
where
$$
	(\bar\xi_k^v)_{k,v\in\mathbb{Z}},~(\hat\xi_k^v)_{k,v\in\mathbb{Z}}
	\text{ are i.i.d.\ with }
	\mathbf{P}[\bar\xi_k^v=-1]=p
$$
and $(\bar\xi_k^v)_{k,v\in\mathbb{Z}},~(\hat\xi_k^v)_{k,v\in\mathbb{Z}}$
are independent of $(X_k^v)_{k,v\in\mathbb{Z}}$.

This evidently corresponds to a model of the form discussed in section 
\ref{sec:ihmm}. In words, the underlying dynamics is of the simplest 
possible type: each time and each spatial location is an independent 
random variable.  When $p=0$, the observations reveal for each site 
whether its current state differs from its state at the previous time and 
from the states of its two neighbors at the present time.  When $p>0$, 
each observation is subject to additional noise that inverts the outcome 
with probability $p$.  By symmetry, it will suffice to consider the case 
$p\le 1/2$, which we will do from now on.

The model that we have constructed is evidently a direct extension of 
Example \ref{ex:bla} to infinite dimension.  As in Example \ref{ex:bla}, 
the process $(X_k,Y_k)_{k\in\mathbb{Z}}$ is ergodic in the strongest 
sense, so that even the uniform stability assumption of Theorem 
\ref{thm:vH09c} is satisfied.  When $p=0$, it is easily seen by the same 
reasoning as in Example \ref{ex:bla} that the filter is not stable.  
However, in Example \ref{ex:bla} the addition of observation noise with 
error probability $p>0$ would yield nondegenerate observations, and thus 
filter stability by Theorem \ref{thm:vH09c}.  In the present setting, on 
the other hand, nondegeneracy fails for any $p$. Nonetheless, the 
observations are \emph{locally nondegenerate} when $p>0$, and one might 
conjecture that this suffices to ensure inheritance of ergodicity.  This 
is not the case.

\begin{thm}
\label{thm:phtrans}
For the model of this section, there exist constants $0<p_\star\le 
p^\star<1/2$ such that the filter is stable for $p^\star<p\le 1/2$ and is 
not stable for $0\le p<p_\star$.
\end{thm}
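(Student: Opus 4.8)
The plan is to recognize the conditional law of the field given the observations as a disordered Ising model on the space-time lattice, and to match the two phases of that model to stability and non-stability of the filter. View the sites of $\mathbb{Z}^2$ as pairs $(k,v)$ and note that each observation is a noisy measurement of a nearest-neighbor spin product: $\bar Y_k^v$ observes the bond between $(k-1,v)$ and $(k,v)$, while $\hat Y_k^v$ observes the bond between $(k,v)$ and $(k,v+1)$, so the observed bonds are exactly all nearest-neighbor edges of $\mathbb{Z}^2$. Since $X$ is i.i.d.\ uniform and each observation equals the corresponding bond product times an independent sign that is $-1$ with probability $p$, a direct computation of the posterior gives, for any finite window $\Lambda$,
$$
	\mathbf{P}[X_\Lambda = x \mid Y] \;\propto\; \prod_{\langle i,j\rangle}
	\exp\!\big(\beta\, Y_{ij}\, x_i x_j\big),\qquad
	\beta := \tfrac12\log\tfrac{1-p}{p},
$$
where $Y_{ij}\in\{-1,1\}$ is the observation on the bond $\langle i,j\rangle$. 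Thus $\mathbf{P}[X\in\,\cdot\,\mid Y]$ is a random-bond Ising model with inverse temperature $\beta$ and quenched disorder $Y$. The gauge transformation $x_i\mapsto X_i^{*}x_i$, with $X^{*}$ the realized configuration, removes the dependence on $X^{*}$ and turns $Y_{ij}$ into the i.i.d.\ signs $\xi_{ij}$; hence, up to gauge, the posterior is the standard random-bond Ising model with bonds equal to $-1$ with probability $p$, sitting exactly on the Nishimori line $\tanh\beta=1-2p$.

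Next I would reduce filter stability to a correlation statement for this Gibbs measure. Because every observation is a product of two spins, the observation $\sigma$-field is invariant under the global flip $X\mapsto -X$, so $\mathbf{E}[X_k^v\mid Y_{1:k}]=0$ for all $k,v$, and taking $A=\{X_k^0=1\}$ gives
$$
	\mathbf{P}[X_k^0=1\mid X_0,Y_{1:k}]-\mathbf{P}[X_k^0=1\mid Y_{1:k}]
	= \tfrac12\,\mathbf{E}[X_k^0\mid X_0,Y_{1:k}].
$$
Filter stability is therefore equivalent to convergence of $\mathbf{E}[X_k^0\mid X_0,Y_{1:k}]$ to zero in $L^1$, which measures the residual dependence of the bulk spin $X_k^0$ on the time-$0$ boundary condition $X_0$ once the observations are given. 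Conceptually (cf.\ Remark \ref{rem:altq} and the equivalences recalled there), this is precisely the question of whether the conditional random field is a unique Gibbs measure, in which case conditioning on $X_0$ is asymptotically irrelevant (stability), or exhibits phase coexistence, in which case conditioning on $X_0$ selects a pure phase whose magnetization survives in the bulk (non-stability). It then suffices to establish uniqueness at high temperature (large $p$) and long-range order at low temperature (small $p$), uniformly in the disorder; note that only the two extreme regimes are needed, not a sharp threshold.

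For $p$ close to $1/2$ the inverse temperature $\beta$ is small, and I would verify the Dobrushin uniqueness condition for the single-site conditional distributions of the random-bond Ising model. Since the interaction strength $|\beta Y_{ij}|=\beta$ is bounded uniformly in the disorder, Dobrushin's criterion holds for every realization of $Y$ once $\beta$ is below the appropriate threshold, giving a unique Gibbs measure together with exponential decay of correlations at a rate uniform in $Y$. This uniform decay forces the influence of the time-$0$ boundary on $X_k^0$ to vanish as $k\to\infty$, so $\mathbf{E}[X_k^0\mid X_0,Y_{1:k}]\to0$ in $L^1$ and the filter is stable; this defines $p^\star<1/2$. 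For $p$ close to $0$, $\beta$ is large, and I would run a Peierls contour argument adapted to the quenched disorder: when $p$ is small almost all bonds are ferromagnetic, and the low-temperature contour estimate survives after averaging over $Y$ (the energy cost of a contour still dominates its entropy with high probability), yielding ferromagnetic long-range order on the Nishimori line in $d=2$. This order means that fixing the entire time-$0$ row $X_0$ forces a nonzero bulk magnetization, so $\mathbf{E}[X_k^0\mid X_0,Y_{1:k}]$ stays bounded away from zero in $L^1$ uniformly in $k$ and the filter is not stable; this defines $p_\star>0$.

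I expect the low-temperature direction to be the main obstacle. Unlike the homogeneous Ising model, the Peierls argument must survive the quenched disorder, and one must show that conditioning on the single time-$0$ row, rather than imposing a full ferromagnetic boundary, still produces a bulk magnetization that does not decay as the observation window grows with $k$; controlling the interplay between the growing finite window $Y_{1:k}$, the one-sided boundary condition, and the frustration created by the antiferromagnetic bonds is the delicate point. The high-temperature direction is comparatively routine given the uniform bound on the couplings. A secondary technical issue is the passage from the one-sided, finite-time filter to the two-sided conditional field underlying the Gibbs-measure picture; I would handle this through the equivalences indicated in Remark \ref{rem:altq}, or else argue directly with the conditional two-point function $\mathbf{E}[X_k^0X_0^0\mid Y_{1:k}]$, whose squared $L^2$ norm is exactly the spin-glass correlation governed by the phase transition.
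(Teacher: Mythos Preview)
Your proposal is correct and follows essentially the same route as the paper: the posterior is identified as a random-bond Ising model on the Nishimori line via the gauge transformation, high-noise stability is established through Dobrushin's uniqueness condition (applied uniformly in the disorder since the coupling strength is bounded by $\beta$), and low-noise instability through a Peierls contour argument in the quenched disorder. The one-sided boundary obstacle you flag is resolved in the paper not through the equivalences of Remark~\ref{rem:altq} but by the more direct device of additionally conditioning on the spatial boundary $\{X_\ell^v:|v|>m\}$ (Lemma~\ref{lem:findimred}), which reduces the problem to a finite box $[1,k]\times[-m,m]$ with the true configuration fixed on three sides, so that the standard Peierls estimate applies uniformly in $k,m$.
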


\begin{rem}
We naturally believe that one can choose $p_\star=p^\star$ in Theorem 
\ref{thm:phtrans}, but we did not succeed in proving that.
The proof yields some explicit bounds on $p_\star$ and $p^\star$.
\end{rem}

Theorem \ref{thm:phtrans} shows that local nondegeneracy does not suffice 
to ensure inheritance of ergodicity in infinite dimension: ergodicity of 
the filter undergoes a \emph{phase transition} at a strictly positive 
signal to noise ratio of the observations.  Remarkably, the underlying 
model does not seem to exhibit any qualitative change in behavior:
$(X_k^v,Y_k^v)_{k,v\in\mathbb{Z}}$ is a one-dependent random field for 
every value of the error probability $p$.  Thus it is evidently possible 
in infinite dimension that complex ergodic behavior emerges in an 
otherwise trivial model when we consider its conditional distributions.

The remainder of this section is devoted to the proof of Theorem 
\ref{thm:phtrans}.  The proof relies on standard tools from statistical 
mechanics \cite{Bov06,Geo11}: a Peierls argument for the low noise regime 
and a Dobrushin contraction method for the high noise regime.

\subsection{Proof of Theorem \ref{thm:phtrans}: low noise}
\label{sec:lownoise}

We begin by noting that as $(X_k^v,Y_k^v)_{k,v\in\mathbb{Z}}$ and 
$(-X_k^v,Y_k^v)_{k,v\in\mathbb{Z}}$ have the same law, it follows that
$\mathbf{E}[X_k^0|Y_1,\ldots,Y_k]=\mathbf{E}[-X_k^0|Y_1,\ldots,Y_k]$,
and we therefore have
$$
	\mathbf{E}[X_k^0|Y_1,\ldots,Y_k]=0\quad\mbox{for all }k\ge 1.
$$
To prove that the filter is not stable, it therefore suffices to show that
$$
	\inf_{k\ge 1}
	\mathbf{E}|\mathbf{E}[X_k^0|X_0,Y_1,\ldots,Y_k]|>0.
$$
To show this, we begin by reducing the problem to finite dimension.

\begin{lem}
\label{lem:findimred}
Suppose that $0<p\le 1/2$.  Then
$$
	\mathbf{E}[X_k^0|X_0,Y_1,\ldots,Y_k,
	\{X_1^v,\ldots,X_k^v:|v|>m\}]
	\xrightarrow{m\to\infty}
	\mathbf{E}[X_k^0|X_0,Y_1,\ldots,Y_k]\quad\mbox{a.s.}
$$
\end{lem}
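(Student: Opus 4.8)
The plan is to read the left-hand side as a reverse (downward) martingale in $m$ and to identify its limit by exploiting the fact that, for \emph{fixed} $k$, the relevant conditional field is one-dimensional in space and hence cannot exhibit long-range order. Write $\mathcal{H}=\sigma(X_0,Y_1,\ldots,Y_k)$ and, for $m\ge 0$,
$$
\mathcal{G}_m=\sigma\big(X_0,Y_1,\ldots,Y_k,\{X_j^v:1\le j\le k,\ |v|>m\}\big),
$$
so that $\mathcal{H}\subseteq\mathcal{G}_{m+1}\subseteq\mathcal{G}_m$ and the quantity on the left of the display is $\mathbf{E}[X_k^0\mid\mathcal{G}_m]$. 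Since the $\mathcal{G}_m$ are decreasing, L\'evy's downward theorem already guarantees that $\mathbf{E}[X_k^0\mid\mathcal{G}_m]$ converges a.s.\ and in $L^1$ to $\mathbf{E}[X_k^0\mid\mathcal{G}_\infty]$ with $\mathcal{G}_\infty=\bigcap_m\mathcal{G}_m$. The entire content of the lemma is therefore to show that $\mathbf{E}[X_k^0\mid\mathcal{G}_\infty]=\mathbf{E}[X_k^0\mid\mathcal{H}]$, i.e.\ that the spins infinitely far away carry no information about $X_k^0$ beyond what is already encoded in $\mathcal{H}$.

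The key structural observation is that, conditionally on $\mathcal{H}$, the field $(X_j^v)_{1\le j\le k,\,v\in\mathbb{Z}}$ is a nearest-neighbor Markov random field on the grid $\{1,\ldots,k\}\times\mathbb{Z}$ (with the time-$0$ layer frozen to the known value $X_0$): the temporal observations $\bar Y_j^v$ couple $X_j^v$ to $X_{j-1}^v$, the spatial observations $\hat Y_j^v$ couple $X_j^v$ to $X_j^{v+1}$, and there are no other interactions. Grouping the spins into columns $C_v=(X_1^v,\ldots,X_k^v)$, this is a \emph{one-dimensional} nearest-neighbor Gibbs measure in the spatial index $v$ with the \emph{finite} per-column state space $\{-1,1\}^k$. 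Because $0<p\le 1/2$, every conditional weight is strictly positive, with coupling strength $\beta=\tfrac12\log\frac{1-p}{p}<\infty$ fixed; in particular the transfer matrices relating adjacent columns have entries in a fixed bounded ratio, uniformly over the (sign) data contained in $\mathcal{H}$.

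First I would use the spatial Markov property to collapse the conditioning: since $0$ is interior to $[-m,m]$, conditionally on $\mathcal{H}$ and on the two boundary columns $C_{m+1},C_{-(m+1)}$ the spin $X_k^0$ is independent of all columns with $|v|>m+1$, so $\mathbf{E}[X_k^0\mid\mathcal{G}_m]$ is a function of $\mathcal{H}$ and of $(C_{m+1},C_{-(m+1)})$ alone. Then I would invoke the standard exponential decay of correlations for one-dimensional finite-state Gibbs measures (transfer-matrix / Birkhoff--Hilbert contraction, or equivalently a Dobrushin argument): the oscillation of this function as the boundary columns range over $\{-1,1\}^k$ is at most $C\rho^{m}$, where $\rho<1$ and $C<\infty$ depend only on $\beta$ and $k$ and \emph{not} on the realized observations, since the latter only flip the signs of the couplings and leave their magnitude unchanged. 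Combining this oscillation bound with the tower property $\mathbf{E}[\,\mathbf{E}[X_k^0\mid\mathcal{G}_m]\mid\mathcal{H}\,]=\mathbf{E}[X_k^0\mid\mathcal{H}]$ yields
$$
\big|\mathbf{E}[X_k^0\mid\mathcal{G}_m]-\mathbf{E}[X_k^0\mid\mathcal{H}]\big|\le C\rho^{m}\xrightarrow{m\to\infty}0,
$$
which is the claim (indeed with a deterministic rate, so the convergence is much stronger than required, and one need not even invoke the martingale step).

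The main obstacle is precisely the identification of the limit, i.e.\ establishing that the far-away spins become irrelevant; this is where it would be tempting --- but wrong --- to appeal to uniqueness of the infinite-volume Gibbs measure. What makes the argument go through is that $k$ is held fixed, so the conditional field lives on a \emph{spatially} one-dimensional lattice (a chain of finite columns) and hence has a unique phase with exponentially decaying correlations for every $p>0$. This should be contrasted sharply with the non-stability of the filter itself, which is a genuinely two-dimensional (space $\times$ time) phenomenon emerging only as $k\to\infty$; for any finite $k$ there is no phase transition, which is exactly what licenses the reduction to finite dimension. The one place where the hypothesis $p>0$ is indispensable is in guaranteeing the strict positivity (finiteness of $\beta$) of the conditional specification: at $p=0$ the couplings degenerate into hard constraints, the transfer matrices lose their uniform ellipticity, and the reduction breaks down --- consistent with the fact that the filter is trivially non-stable at $p=0$.
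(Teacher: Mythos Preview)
Your proposal is correct and follows essentially the same route as the paper: both arguments recognize that, conditionally on $\mathcal{H}$, the columns $C_v=(X_1^v,\ldots,X_k^v)$ form a one-dimensional nearest-neighbor Markov chain in the spatial index $v$ with uniformly positive transitions (since $\beta<\infty$), and both extract a geometric decay rate in $m$. The paper makes the uniform positivity explicit via a change of measure $\mathbf{Q}$ that decouples the two half-lines and yields a Doeblin constant $e^{-4\beta k}$, whereas you phrase the same mechanism as bounded transfer-matrix ratios and Birkhoff contraction; these are equivalent, and your reverse-martingale preamble is, as you note yourself, superfluous once the deterministic bound is in hand.
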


\begin{proof}
Let $\beta := \log\sqrt{(1-p)/p}>0$. We begin by noting that
$$
	\mathbf{P}[\hat Y_\ell^v=y|X_0,\ldots,X_k] =
	\sqrt{p(1-p)}\, e^{\beta yX_\ell^vX_\ell^{v+1}}
$$
for $1\le\ell\le k$ and $y\in\{-1,1\}$.
Define the probability measure $\mathbf{Q}$ such that
$$
	\mathbf{P}(A) = \mathbf{E_Q}\bigg[
	\mathbf{1}_A
	\prod_{\ell=1}^k 4p(1-p)\, 
	e^{\beta \hat Y_\ell^m X_\ell^mX_\ell^{m+1}}\,
	e^{\beta \hat Y_{\ell}^{-m-1}X_\ell^{-m-1}X_\ell^{-m}}
	\bigg].
$$
Then under $\mathbf{Q}$, the observations $\hat Y_\ell^m$ and
$\hat Y_\ell^{-m-1}$, $1\le\ell\le k$ are symmetric Bernoulli and
independent from all the remaining variables in the model, while the 
remainder of the model is the same as defined above.  In particular, this
implies that
$$
	\{X_0^v,X_\ell^v,Y_\ell^v:1\le \ell\le k,|v|>m\}
	\independent
	\{X_0^v,X_\ell^v,Y_\ell^v:1\le \ell\le k,|v|\le m\}
	\quad\mbox{under }\mathbf{Q}.
$$
We therefore obtain using the Bayes formula
\begin{align*}
	&\mathbf{P}[A|X_0,Y_1,\ldots,Y_k,
	\{X_1^v,\ldots,X_k^v:|v|>m\}] = \\
	&\quad\frac{
	\mathbf{E_Q}[\mathbf{1}_A
	\frac{d\mathbf{P}}{d\mathbf{Q}}|X_0,Y_1,\ldots,Y_k,
	\{X_1^v,\ldots,X_k^v:|v|>m\}]
	}{
	\mathbf{E_Q}[\frac{d\mathbf{P}}{d\mathbf{Q}}|X_0,Y_1,\ldots,Y_k,
	\{X_1^v,\ldots,X_k^v:|v|>m\}]} \\
	&
	\quad\quad\ge e^{-4\beta k}\,
	\mathbf{Q}[A|X_0,Y_1,\ldots,Y_k,
        \{X_1^v,\ldots,X_k^v:|v|>m\}]
	\\
	&
	\quad\quad= e^{-4\beta k}\,
	\mathbf{Q}[A|X_0,Y_1,\ldots,Y_k]
\end{align*}
for any $A\in\sigma\{X_0,Y_1,\ldots,Y_k,X_1^v,\ldots,X_k^v:|v|\le m\}$.

Define $Z^0:=(X_1^0,\ldots,X_k^0)$ and
$Z^{-m}:=(X_1^m,\ldots,X_k^m,X_1^{-m},\ldots,X_k^{-m})$ for $m\ge 1$.
Due to the conditional independence
structure of the infinite-dimensional filtering model,
$$
	\mathbf{E}[f(Z^{-m})|X_0,Y_1,\ldots,Y_k,
	Z^{-m-1},Z^{-m-2},\ldots] =
	\mathbf{E}[f(Z^{-m})|X_0,Y_1,\ldots,Y_k,Z^{-m-1}]
$$
for every $m\ge 0$.  Thus $(Z^m)_{m\le 0}$ is a 
Markov chain under any regular version of the conditional distribution 
$\mathbf{P}[\,\cdot\,|X_0,Y_1,\ldots,Y_k]$ (almost surely with respect to 
the realization of $X_0,Y_1,\ldots,Y_k$).  Moreover, the above estimate 
shows that the (random) transition kernels of this Markov chain satisfy
the Doeblin condition \cite[Theorem 16.2.4]{MT93}, so
$$
	|\mathbf{E}[X_k^0|X_0,Y_1,\ldots,Y_k,
	\{X_1^v,\ldots,X_k^v:|v|>m\}]-
	\mathbf{E}[X_k^0|X_0,Y_1,\ldots,Y_k]| \le
	2(1-e^{-4\beta k})^{m+1}
$$
for all $m\ge 0$.  This completes the proof.
\end{proof}

Lemma \ref{lem:findimred} reduces our problem to a finite-dimensional one. 
Indeed, it is clear that the filter is not stable for $p=0$ (for precisely 
the same reason as in Example \ref{ex:bla}), so we will assume 
without loss of generality in the sequel that $0<p\le 1/2$.  
Applying Lemma \ref{lem:findimred}, it follows that in order to prove that 
the filter is not stable, it suffices to show that
$$
	\inf_{k,m\ge 1}\mathbf{E}|\mathbf{E}[X_k^0|X_0,Y_1,\ldots,Y_k,
	\{X_1^v,\ldots,X_k^v:|v|>m\}]|>0.
$$
But the conditional independence structure of the 
infinite-dimensional filtering model implies that the conditional 
expectation inside 
this expression depends only on $X_\ell^v$ and $Y_\ell^v$ for $0\le\ell\le 
k$ and $|v|\le m+1$.  We are thus faced with the problem of obtaining
a lower bound on this finite-dimensional quantity that is uniform in 
$k,m$.

To lighten the notation, it will be convenient to view 
$(X_k^v)_{k,v\in\mathbb{Z}}$ not as a sequence of spatial random fields on 
$\mathbb{Z}$, but rather as a single space-time random field on 
$\mathbb{Z}^2$.  To this end, we will write $X^q:=X_k^v$ for 
$q=(k,v)\in\mathbb{Z}^2$.  We will similarly write $Y^{qr}:=\bar Y_k^v$ 
and $\xi^{qr}:=\bar\xi_k^v$ if $q=(k-1,v)$ and $r=(k,v)$, and 
$Y^{qr}:=\hat Y_k^v$ and $\xi^{qr}:=\hat\xi_k^v$ if $q=(k,v)$ and 
$r=(k,v+1)$ (the order of the indices $q,r$ is irrelevant, that is, 
$Y^{qr}:=Y^{rq}$ etc.)  In this manner, we can view 
$X=(X^q)_{q\in\mathbb{Z}^2}$ as a random field on the lattice 
$\mathbb{Z}^2$, with observations $Y^{qr}$ attached to each edge 
$\{q,r\}\subset\mathbb{Z}^2$ with $\|q-r\|=1$.

\begin{lem}
\label{lem:rbim}
Suppose that $0<p\le 1/2$, and let $k,m\ge 1$. 
Define $\beta := \log\sqrt{(1-p)/p}$,
$J:=[1,k]\times[-m,m]$, and
$\partial J:=\{0\}\times[-m,m]\cup[1,k]\times\{-m-1,m+1\}$.
For any given configuration $x\in\{-1,1\}^{\mathbb{Z}^2}$, we define the
random measure $\Sigma$ on $\{-1,1\}^J$ as
$$
	\Sigma^x(\{z\}) := \frac{1}{Z}
	\exp\Bigg(\beta\,\Bigg\{
	\sum_{\{q,r\}\subseteq J:\|q-r\|=1}
	\xi^{qr}x^qx^rz^qz^r
	+
	\sum_{q\in J,r\in\partial J:\|q-r\|=1}
	\xi^{qr}x^qz^q
	\Bigg\}\Bigg),
$$
where $Z$ is the normalization such that $\Sigma^x(J)=1$.
Then
$$
	\mathbf{E}[(X^q)_{q\in J}\in A|X_0,Y_1,\ldots,Y_k,
        \{X_1^v,\ldots,X_k^v:|v|>m\}] =
	\Sigma^X(A).
$$
\end{lem}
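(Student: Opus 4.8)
The plan is to obtain the posterior of the interior spins by a direct application of Bayes' rule, exploiting that the prior on the spins is trivial and that the observation likelihood factorizes over edges. First I would record the single-edge likelihood. Since the spins $(X^q)$ are i.i.d.\ uniform and independent of the noise, and each observation satisfies $Y^{qr}=X^qX^r\xi^{qr}$ with $\mathbf{P}[\xi^{qr}=-1]=p$, the same computation as in Lemma \ref{lem:findimred} gives, for any edge $\{q,r\}$ with $\|q-r\|=1$,
$$
	\mathbf{P}[Y^{qr}=y\mid X] = \sqrt{p(1-p)}\,e^{\beta\, y\, s^q s^r},
	\qquad \beta=\log\sqrt{(1-p)/p},
$$
where $s^q,s^r\in\{-1,1\}$ denote the values of $X^q,X^r$.

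Next I would set up the Bayes formula. The conditioning $\sigma$-field is generated by $X_0$, by $Y_1,\ldots,Y_k$, and by $\{X_\ell^v:|v|>m\}$; together these fix every spin at times $0,\ldots,k$ except those in $J$, and record every observation on an edge of $[0,k]\times\mathbb{Z}$, while spins at times outside $[0,k]$ are independent of both $z:=(X^q)_{q\in J}$ and of the conditioning and hence drop out. As the prior on $z$ is uniform, the posterior of $z$ is proportional to the joint likelihood of the observations viewed as a function of $z$ with all other spins fixed. This likelihood factorizes, so each observed edge contributes $\sqrt{p(1-p)}\,e^{\beta\,Y^{qr}s^qs^r}$. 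I would then sort edges by their relation to $J$: edges with no endpoint in $J$ give $z$-independent factors absorbed into the normalization $Z$; edges with both endpoints in $J$ give $e^{\beta\,Y^{qr}z^qz^r}$; and edges with a single endpoint $q\in J$ give $e^{\beta\,Y^{qr}z^qX^r}$, where $r$ is the fixed neighbor. The bookkeeping step is to verify that every fixed neighbor of $J$ lies in $\partial J$: the temporal neighbors below $J$ form $\{0\}\times[-m,m]$, the spatial neighbors form $[1,k]\times\{-m-1,m+1\}$, and the temporal neighbors \emph{above} $J$ sit at time $k+1$, whose incident edges are not among $Y_1,\ldots,Y_k$ and so never appear. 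This yields
$$
	\mathbf{P}[z\in A\mid X_0,Y_1,\ldots,Y_k,\{X_\ell^v:|v|>m\}]
	\propto \exp\Big(
	\beta\!\!\sum_{\{q,r\}\subseteq J}\!\! Y^{qr}z^qz^r
	+ \beta\!\!\sum_{q\in J,\,r\in\partial J}\!\! Y^{qr}X^rz^q\Big).
$$

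Finally I would perform the gauge substitution that turns this into $\Sigma^X$. Since all quantities are $\pm1$, the relation $Y^{qr}=X^qX^r\xi^{qr}$ gives $Y^{qr}=\xi^{qr}X^qX^r$; evaluating at the realized configuration $x=X$, interior edges satisfy $Y^{qr}=\xi^{qr}x^qx^r$, while boundary edges satisfy $Y^{qr}X^r=\xi^{qr}X^q(X^r)^2=\xi^{qr}x^q$. Substituting these identities into the exponent above reproduces exactly the definition of $\Sigma^X$, completing the proof.

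The only genuinely delicate point is the boundary bookkeeping in the second paragraph: one must check that the conditioned neighbors of $J$ are precisely $\partial J$, and in particular that the unobserved time-$(k{+}1)$ edges drop out so that no spurious boundary term appears. Everything else is a routine application of Bayes' rule to a product measure together with the $\pm1$ gauge identity.
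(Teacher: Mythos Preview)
Your proposal is correct and follows essentially the same route as the paper: write the joint density of the boundary spins, interior spins, and relevant observations as a product over edges using the single-edge likelihood $\sqrt{p(1-p)}\,e^{\beta y s^q s^r}$, apply Bayes' rule with the uniform prior on the interior, and then substitute $Y^{qr}=\xi^{qr}X^qX^r$ to obtain the $\Sigma^X$ form. Your treatment of the time-$(k{+}1)$ boundary (those edges are simply not among the observations $Y_1,\ldots,Y_k$) is exactly the point that makes $\partial J$ come out as stated, and the paper handles it the same way by only including edges $\{q,r\}$ with $q\in J$, $r\in J\cup\partial J$ in the joint density.
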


\begin{proof}
By the conditional independence
structure of the filtering model, we have
\begin{align*}
	&\mathbf{E}[(X^q)_{q\in J}\in A|X_0,Y_1,\ldots,Y_k,
        \{X_1^v,\ldots,X_k^v:|v|>m\}] =\\
	&\qquad\mathbf{E}[(X^q)_{q\in J}\in A|
	(X^q)_{q\in\partial J},
	(Y^{qr})_{q\in J,
	r\in J\cup\partial J,\|q-r\|=1}].
\end{align*}
The joint distribution of the random variables that appear
in this expression is
\begin{multline*}
	\mathbf{P}[(X^q)_{q\in J\cup\partial J}=z,
	(Y^{qr})_{q\in J,
        r\in J\cup\partial J,\|q-r\|=1}=y] = 
	2^{-|J\cup\partial J|}\times\mbox{}\\
	\prod_{\{q,r\}\subseteq J:\|q-r\|=1}
	\sqrt{p(1-p)}\, e^{\beta y^{qr}z^qz^r}
	\prod_{q\in J,
        r\in \partial J:\|q-r\|=1}
	\sqrt{p(1-p)}\, e^{\beta y^{qr}z^qz^r},
\end{multline*}
where $|A|$ denotes the cardinality of a set $A$.  The result
now follows readily from the Bayes formula and the fact that
$Y^{qr}=X^qX^r\xi^{qr}$ by construction.
\end{proof}

Lemma \ref{lem:rbim} shows that the conditional distribution 
$\mathbf{P}[\,\cdot\,|X_0,Y_1,\ldots,Y_k, \{X_1^v,\ldots,X_k^v:|v|>m\}]$ 
has a familiar form in statistical mechanics: it is (up to the change of 
variables or \emph{gauge transformation} $\sigma^q = x^qz^q$) an Ising 
model with random interactions, also known as a \emph{random bond Ising 
model} or an \emph{Ising spin glass}, with inverse temperature
$\beta=\log\sqrt{(1-p)/p}$.  The failure of stability of the filter for 
large $\beta$ can now be addressed using a standard method in statistical 
mechanics \cite[section 6.4]{Bov06}.  For concreteness, we include the 
requisite arguments in the present setting, which completes the proof.

\begin{rem}
For the benefit of readers who are not familiar with ideas from 
statistical mechanics, we provide a brief description of the 
Peierls argument that is used below.

In Lemma \ref{lem:rbim}, one should interpret $x$ as the true 
configuration of the underlying system, and $z$ as the configuration in 
the set $J$ that is reconstructed from the noisy observations.  The random 
variable $\Sigma^x(\{z\})$ (which depends only on the realization of the 
observation noise once $x,z$ are fixed) determines the probability that we 
reconstruct $z$ when the true configuration is $x$.  As we have 
conditioned on the true value of the signal at $X_0,X_1^v,\ldots,X_k^v$ 
for $|v|>m$, we can assume that the true configuration is correctly 
reconstructed on the boundary $z^r=x^r$ for $r\in\partial J$.  We want to 
show that the probability of correctly reconstructing the true 
configuration at a given site inside $J$ (e.g., $X_k^0$ which 
corresponds to the point $\mathbf{0}:=(k,0)\in J$) remains bounded away 
from $\frac{1}{2}$ with high probability, no matter how far this site is
from the boundary $\partial J$.

Now suppose that $z^\mathbf{0}\ne x^\mathbf{0}$, that is, that the 
configuration at $\mathbf{0}$ is incorrectly decoded.  Then there must 
exist an interface between the set of incorrectly decoded vertices (that 
contains $\mathbf{0}$) and the set of correctly decoded vertices (that 
contains the boundary $\partial J$).  This is illustrated in the following 
figure, where correctly decoded vertices are indicated by white circles 
and incorrectly decoded vertices are indicated by black circles:
\begin{center}
\begin{tikzpicture}

\filldraw[color=black!20] (4.75,-.75) rectangle (3.25,.25);
\filldraw[color=black!20] (3.25,.25) rectangle (2.75,-.25);
\filldraw[color=black!20] (2.25,.25) rectangle (4.25,.75);
\filldraw[color=black!20] (3.25,1.75) rectangle (4.25,.75);
\filldraw[color=black!20] (3.25,1.75) rectangle (3.75,2.25);

\draw (-.5,0) node[left] {$\scriptstyle 0$};
\draw (-.5,.5) node[left] {$\scriptstyle 1$};
\draw (-.5,1) node[left] {$\scriptstyle 2$};
\draw (-.5,1.6) node[left] {$\vdots$};
\draw (-.5,2) node[left] {$\scriptstyle m$};
\draw (-.5,2.5) node[left] {$\scriptstyle m+1$};
\draw (-.5,-.5) node[left] {$\scriptstyle -1$};
\draw (-.5,-1) node[left] {$\scriptstyle -2$};
\draw (-.5,-1.4) node[left] {$\vdots$};
\draw (-.5,-2) node[left] {$\scriptstyle -m$};
\draw (-.5,-2.5) node[left] {$\scriptstyle -m-1$};

\draw(0,3) node[above] {$\scriptstyle 0$};
\draw(.5,3) node[above] {$\scriptstyle 1$};
\draw(1,3) node[above] {$\scriptstyle 2$};
\draw(1.5,3) node[above] {$\scriptstyle 3$};
\draw(3,3) node[above] {$\cdots$};
\draw(4.5,3) node[above] {$\scriptstyle k$};

\draw (.5,0) to (4.5,0);
\draw[<-] (4.575,-.075) to[out=-30,in=180] (5.25,-.4) node[right] {$\scriptstyle\mathbf{0}$};
\draw (.5,.5) to (4.5,.5);
\draw (.5,1) to (4.5,1);
\draw (.5,1.5) to (4.5,1.5);
\draw (.5,2) to (4.5,2);
\draw (.5,-.5) to (4.5,-.5);
\draw (.5,-1) to (4.5,-1);
\draw (.5,-1.5) to (4.5,-1.5);
\draw (.5,-2) to (4.5,-2);

\draw[dashed] (0,0) to (.5,0);
\draw[dashed] (0,.5) to (.5,.5);
\draw[dashed] (0,1) to (.5,1);
\draw[dashed] (0,1.5) to (.5,1.5);
\draw[dashed] (0,2) to (.5,2);
\draw[dashed] (0,-.5) to (.5,-.5);
\draw[dashed] (0,-1) to (.5,-1);
\draw[dashed] (0,-1.5) to (.5,-1.5);
\draw[dashed] (0,-2) to (.5,-2);

\filldraw[draw=black,fill=white] (0,0) circle (.075);
\filldraw[draw=black,fill=white] (0,.5) circle (.075);
\filldraw[draw=black,fill=white] (0,1) circle (.075);
\filldraw[draw=black,fill=white] (0,1.5) circle (.075);
\filldraw[draw=black,fill=white] (0,2) circle (.075);
\filldraw[draw=black,fill=white] (0,-.5) circle (.075);
\filldraw[draw=black,fill=white] (0,-1) circle (.075);
\filldraw[draw=black,fill=white] (0,-1.5) circle (.075);
\filldraw[draw=black,fill=white] (0,-2) circle (.075);

\draw (.5,2) to (.5,-2);
\draw (1,2) to (1,-2);
\draw (1.5,2) to (1.5,-2);
\draw (2,2) to (2,-2);
\draw (2.5,2) to (2.5,-2);
\draw (3,2) to (3,-2);
\draw (3.5,2) to (3.5,-2);
\draw (4,2) to (4,-2);
\draw (4.5,2) to (4.5,-2);

\draw[dashed] (.5,2) to (.5,2.5);
\draw[dashed] (1,2) to (1,2.5);
\draw[dashed] (1.5,2) to (1.5,2.5);
\draw[dashed] (2,2) to (2,2.5);
\draw[dashed] (2.5,2) to (2.5,2.5);
\draw[dashed] (3,2) to (3,2.5);
\draw[dashed] (3.5,2) to (3.5,2.5);
\draw[dashed] (4,2) to (4,2.5);
\draw[dashed] (4.5,2) to (4.5,2.5);

\draw[dashed] (.5,-2) to (.5,-2.5);
\draw[dashed] (1,-2) to (1,-2.5);
\draw[dashed] (1.5,-2) to (1.5,-2.5);
\draw[dashed] (2,-2) to (2,-2.5);
\draw[dashed] (2.5,-2) to (2.5,-2.5);
\draw[dashed] (3,-2) to (3,-2.5);
\draw[dashed] (3.5,-2) to (3.5,-2.5);
\draw[dashed] (4,-2) to (4,-2.5);
\draw[dashed] (4.5,-2) to (4.5,-2.5);

\filldraw[draw=black,fill=white] (.5,2.5) circle (.075);
\filldraw[draw=black,fill=white] (1,2.5) circle (.075);
\filldraw[draw=black,fill=white] (1.5,2.5) circle (.075);
\filldraw[draw=black,fill=white] (2,2.5) circle (.075);
\filldraw[draw=black,fill=white] (2.5,2.5) circle (.075);
\filldraw[draw=black,fill=white] (3,2.5) circle (.075);
\filldraw[draw=black,fill=white] (3.5,2.5) circle (.075);
\filldraw[draw=black,fill=white] (4,2.5) circle (.075);
\filldraw[draw=black,fill=white] (4.5,2.5) circle (.075);

\filldraw[draw=black,fill=white] (.5,2) circle (.075);
\filldraw[draw=black,fill=white] (1,2) circle (.075);
\filldraw[draw=black,fill=white] (1.5,2) circle (.075);
\filldraw[draw=black,fill=white] (2,2) circle (.075);
\filldraw[draw=black,fill=white] (2.5,2) circle (.075);
\filldraw[draw=black,fill=white] (3,2) circle (.075);
\filldraw[draw=black,fill=black] (3.5,2) circle (.075);
\filldraw[draw=black,fill=white] (4,2) circle (.075);
\filldraw[draw=black,fill=white] (4.5,2) circle (.075);

\filldraw[draw=black,fill=black] (.5,1.5) circle (.075);
\filldraw[draw=black,fill=black] (1,1.5) circle (.075);
\filldraw[draw=black,fill=black] (1.5,1.5) circle (.075);
\filldraw[draw=black,fill=white] (2,1.5) circle (.075);
\filldraw[draw=black,fill=white] (2.5,1.5) circle (.075);
\filldraw[draw=black,fill=white] (3,1.5) circle (.075);
\filldraw[draw=black,fill=black] (3.5,1.5) circle (.075);
\filldraw[draw=black,fill=black] (4,1.5) circle (.075);
\filldraw[draw=black,fill=white] (4.5,1.5) circle (.075);

\filldraw[draw=black,fill=black] (.5,1) circle (.075);
\filldraw[draw=black,fill=black] (1,1) circle (.075);
\filldraw[draw=black,fill=white] (1.5,1) circle (.075);
\filldraw[draw=black,fill=white] (2,1) circle (.075);
\filldraw[draw=black,fill=white] (2.5,1) circle (.075);
\filldraw[draw=black,fill=white] (3,1) circle (.075);
\filldraw[draw=black,fill=black] (3.5,1) circle (.075);
\filldraw[draw=black,fill=black] (4,1) circle (.075);
\filldraw[draw=black,fill=white] (4.5,1) circle (.075);

\filldraw[draw=black,fill=white] (.5,.5) circle (.075);
\filldraw[draw=black,fill=white] (1,.5) circle (.075);
\filldraw[draw=black,fill=white] (1.5,.5) circle (.075);
\filldraw[draw=black,fill=white] (2,.5) circle (.075);
\filldraw[draw=black,fill=black] (2.5,.5) circle (.075);
\filldraw[draw=black,fill=black] (3,.5) circle (.075);
\filldraw[draw=black,fill=black] (3.5,.5) circle (.075);
\filldraw[draw=black,fill=black] (4,.5) circle (.075);
\filldraw[draw=black,fill=white] (4.5,.5) circle (.075);

\filldraw[draw=black,fill=white] (.5,0) circle (.075);
\filldraw[draw=black,fill=white] (1,0) circle (.075);
\filldraw[draw=black,fill=white] (1.5,0) circle (.075);
\filldraw[draw=black,fill=white] (2,0) circle (.075);
\filldraw[draw=black,fill=white] (2.5,0) circle (.075);
\filldraw[draw=black,fill=black] (3,0) circle (.075);
\filldraw[draw=black,fill=white] (3.5,0) circle (.075);
\filldraw[draw=black,fill=black] (4,0) circle (.075);
\filldraw[draw=black,fill=black] (4.5,0) circle (.075);

\filldraw[draw=black,fill=white] (.5,-.5) circle (.075);
\filldraw[draw=black,fill=white] (1,-.5) circle (.075);
\filldraw[draw=black,fill=black] (1.5,-.5) circle (.075);
\filldraw[draw=black,fill=white] (2,-.5) circle (.075);
\filldraw[draw=black,fill=white] (2.5,-.5) circle (.075);
\filldraw[draw=black,fill=white] (3,-.5) circle (.075);
\filldraw[draw=black,fill=black] (3.5,-.5) circle (.075);
\filldraw[draw=black,fill=black] (4,-.5) circle (.075);
\filldraw[draw=black,fill=black] (4.5,-.5) circle (.075);

\filldraw[draw=black,fill=white] (.5,-1) circle (.075);
\filldraw[draw=black,fill=black] (1,-1) circle (.075);
\filldraw[draw=black,fill=black] (1.5,-1) circle (.075);
\filldraw[draw=black,fill=white] (2,-1) circle (.075);
\filldraw[draw=black,fill=white] (2.5,-1) circle (.075);
\filldraw[draw=black,fill=white] (3,-1) circle (.075);
\filldraw[draw=black,fill=white] (3.5,-1) circle (.075);
\filldraw[draw=black,fill=white] (4,-1) circle (.075);
\filldraw[draw=black,fill=white] (4.5,-1) circle (.075);

\filldraw[draw=black,fill=white] (.5,-1.5) circle (.075);
\filldraw[draw=black,fill=black] (1,-1.5) circle (.075);
\filldraw[draw=black,fill=white] (1.5,-1.5) circle (.075);
\filldraw[draw=black,fill=white] (2,-1.5) circle (.075);
\filldraw[draw=black,fill=white] (2.5,-1.5) circle (.075);
\filldraw[draw=black,fill=white] (3,-1.5) circle (.075);
\filldraw[draw=black,fill=black] (3.5,-1.5) circle (.075);
\filldraw[draw=black,fill=white] (4,-1.5) circle (.075);
\filldraw[draw=black,fill=white] (4.5,-1.5) circle (.075);

\filldraw[draw=black,fill=white] (.5,-2) circle (.075);
\filldraw[draw=black,fill=white] (1,-2) circle (.075);
\filldraw[draw=black,fill=white] (1.5,-2) circle (.075);
\filldraw[draw=black,fill=white] (2,-2) circle (.075);
\filldraw[draw=black,fill=white] (2.5,-2) circle (.075);
\filldraw[draw=black,fill=white] (3,-2) circle (.075);
\filldraw[draw=black,fill=black] (3.5,-2) circle (.075);
\filldraw[draw=black,fill=black] (4,-2) circle (.075);
\filldraw[draw=black,fill=white] (4.5,-2) circle (.075);

\filldraw[draw=black,fill=white] (.5,-2.5) circle (.075);
\filldraw[draw=black,fill=white] (1,-2.5) circle (.075);
\filldraw[draw=black,fill=white] (1.5,-2.5) circle (.075);
\filldraw[draw=black,fill=white] (2,-2.5) circle (.075);
\filldraw[draw=black,fill=white] (2.5,-2.5) circle (.075);
\filldraw[draw=black,fill=white] (3,-2.5) circle (.075);
\filldraw[draw=black,fill=white] (3.5,-2.5) circle (.075);
\filldraw[draw=black,fill=white] (4,-2.5) circle (.075);
\filldraw[draw=black,fill=white] (4.5,-2.5) circle (.075);

\end{tikzpicture}
\end{center}
We call the shaded region a \emph{contour}: it has the key property that 
every edge that crosses its boundary connects an incorrectly decoded site 
inside with a correctly decoded site outside (it 
also proves to be convenient to assume contours are simply connected, 
i.e., that they have no holes; this is why the white vertex is included in 
the shaded region).

The main idea of the proof is that when the error probability $p$ is close 
to zero, it is very unlikely that a decoded configuration $z$ contains 
many edges that connect a correctly decoded site $r$ with an incorrectly 
decoded site $q$.  Intuitively, if the error probability $p$ is small, 
then the observations reveal with a high degree of confidence whether 
$x^q$ and $x^r$ coincide or differ; therefore, if we reconstruct one 
(in)correctly, then it is very likely that the other will be reconstructed 
(in)correctly as well.  More precisely, the $\Sigma^x$-probability of 
reconstructing a configuration that contains a given contour $J'\subseteq 
J$ is exponentially small in the length of the boundary of $J'$
(Lemma \ref{lem:peierls} below).  It then follows from a simple union 
bound that the probability that the decoded configuration contains 
\emph{any} contour that includes $\mathbf{0}$ is bounded away from 
$\frac{1}{2}$.  But this completes the proof, as at least one such contour 
must be present if $\mathbf{0}$ is incorrectly decoded.  (Of course, the 
crucial insight behind this approach is that the probability 
of containing a given contour and the number of contours of a given length
do not depend on the size of the domain, so that all the estimates that 
appear in this argument are dimension-free.)
\end{rem}

We now proceed to make these ideas precise.

\begin{prop}
There exists an absolute constant $0<p_\star<1/2$ such that
$$
	\mathbf{E}|\mathbf{E}[X_k^0|X_0,Y_1,\ldots,Y_k,
	\{X_1^v,\ldots,X_k^v:|v|>m\}]|\ge \tfrac{1}{4}
$$
for every $k,m\ge 1$ whenever $0<p<p_\star$.
\end{prop}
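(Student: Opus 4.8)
The plan is to reduce, via Lemma~\ref{lem:rbim} and the gauge transformation $\sigma^q := X^q z^q$, the posterior law of the unknown spins to a random bond Ising model that depends only on the observation noise, and then to run a Peierls argument on the event that the central site $\mathbf{0}=(k,0)$ is incorrectly decoded. First I would record the reduction: by Lemma~\ref{lem:rbim} the posterior law of $(X^q)_{q\in J}$ is $\Sigma^X$, and after the gauge change the resulting measure $\Sigma$ is an Ising model with couplings $\xi^{qr}\in\{-1,1\}$ and $+$ boundary condition on $\partial J$, for which the event $\{\sigma^{\mathbf 0}=-1\}$ is exactly the event that $\mathbf 0$ is incorrectly decoded. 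Crucially, after the gauge change $\Sigma$ no longer depends on the true configuration $X$ but only on the noise $\xi$, so that
$$
	\mathbf{E}[X_k^0\mid X_0,Y_1,\ldots,Y_k,\{X_1^v,\ldots,X_k^v:|v|>m\}]
	= X^{\mathbf 0}\big(\Sigma[\sigma^{\mathbf 0}=1]-\Sigma[\sigma^{\mathbf 0}=-1]\big).
$$
Since $|X^{\mathbf 0}|=1$ and $|1-2t|\ge 1-2t$, taking absolute values and expectations gives $\mathbf{E}|\mathbf{E}[X_k^0\mid\,\cdots]|\ge 1-2\,\mathbf{E}[\Sigma[\sigma^{\mathbf 0}=-1]]$, so it suffices to prove $\mathbf{E}[\Sigma[\sigma^{\mathbf 0}=-1]]\le\tfrac38$ for all $k,m\ge1$ once $p$ is small.

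The next step is the Peierls bound. If $\sigma^{\mathbf 0}=-1$ then, because the boundary spins are all $+$, there must be a contour $\gamma$ (a simply connected interface, as in the figure) enclosing $\mathbf 0$, so a union bound yields $\Sigma[\sigma^{\mathbf 0}=-1]\le\sum_{\gamma\ni\mathbf 0}\Sigma[\gamma]$. For a fixed $\gamma$, flipping all spins strictly inside $\gamma$ is injective on the configurations containing $\gamma$ and alters only the crossing edges, which gives the usual estimate $\Sigma[\gamma]\le\exp(-2\beta\sum_{e\in\gamma}\xi_e)$. Combining this with the standard contour count $\#\{\gamma\ni\mathbf 0:|\gamma|=L\}\le C\,L\,3^{L}$ and summing over $L\ge4$ would close the argument, provided the expected weight of a length-$L$ contour decays geometrically and uniformly in $k,m$.

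That last point is the main obstacle, and the one genuinely delicate feature of the problem. Because $\beta=\log\sqrt{(1-p)/p}$ is the Nishimori temperature, the naive annealed bound is worthless: a direct computation gives $\mathbf{E}[\exp(-2\beta\sum_{e\in\gamma}\xi_e)]=\prod_{e\in\gamma}\big((1-p)e^{-2\beta}+pe^{2\beta}\big)=1$ identically, so the first moment of the contour weight carries no information. The remedy is to exploit that $\Sigma[\gamma]$ is a probability, hence $\Sigma[\gamma]\le1$ and therefore $\Sigma[\gamma]\le\Sigma[\gamma]^{1/2}\le\exp(-\beta\sum_{e\in\gamma}\xi_e)$. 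This "square-root trick'' effectively halves the inverse temperature and produces the nontrivial estimate
$$
	\mathbf{E}[\Sigma[\gamma]]\le\prod_{e\in\gamma}\big((1-p)e^{-\beta}+pe^{\beta}\big)
	=\big(2\sqrt{p(1-p)}\big)^{|\gamma|}.
$$

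Finally I would assemble the pieces. Since $2\sqrt{p(1-p)}\to0$ as $p\to0$, substituting the contour estimate into the union bound gives
$$
	\mathbf{E}[\Sigma[\sigma^{\mathbf 0}=-1]]
	\le\sum_{L\ge4}C\,L\,3^{L}\big(2\sqrt{p(1-p)}\big)^{L},
$$
a series that converges as soon as $6\sqrt{p(1-p)}<1$ and that tends to $0$ as $p\to0$, with all bounds independent of $k$ and $m$. Choosing an absolute constant $p_\star>0$ small enough that this sum is at most $\tfrac38$ for every $0<p<p_\star$ then yields $\mathbf{E}|\mathbf{E}[X_k^0\mid\,\cdots]|\ge\tfrac14$ for all $k,m\ge1$, as claimed. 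The entire argument is dimension-free precisely because both the per-contour weight and the number of contours of a given length are independent of the domain size.
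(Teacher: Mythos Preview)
Your argument is correct, and it takes a genuinely different route from the paper's own proof. Both proofs begin with the same reduction via Lemma~\ref{lem:rbim} and the gauge change $\sigma^q=x^qz^q$, and both arrive at the same per-contour bound $\Sigma[\gamma]\le\exp(-2\beta\sum_{e\in\gamma}\xi_e)$ (this is the paper's Lemma~\ref{lem:peierls}). From that point on the strategies diverge.

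The paper proves a \emph{quenched} statement: it shows that with $\xi$-probability at least $\tfrac12$ one has $\Sigma^x(\{z:z^{\mathbf 0}=x^{\mathbf 0}\})\ge\tfrac34$. To get there it splits into two events, bounding on the one hand $\sum_{J'\ni\mathbf 0}e^{-\beta|\mathfrak{E}J'|}$ deterministically, and on the other hand the $\xi$-probability that some contour has $\sum_{e\in\mathfrak{E}J'}\xi_e\le\tfrac12|\mathfrak{E}J'|$ via a binomial tail bound. This produces two separate series $c_1,c_2$ that must both be made small. You instead take the \emph{annealed} route, bounding $\mathbf{E}[\Sigma(\sigma^{\mathbf 0}=-1)]$ directly; your observation that the naive first-moment bound collapses to $1$ at the Nishimori temperature, and that this is repaired by the inequality $\Sigma[\gamma]\le\Sigma[\gamma]^{1/2}\le\exp(-\beta\sum_e\xi_e)$, is exactly right and yields $\mathbf{E}[\Sigma[\gamma]]\le(2\sqrt{p(1-p)})^{|\gamma|}$. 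This gives a single geometric series and a cleaner path to the lower bound $\tfrac14$. The paper's version has the minor advantage of giving a pathwise (in $\xi$) statement, but that extra information is not needed for the proposition as stated. One cosmetic point: since $\mathbf 0=(k,0)$ lies on the free edge of $J$, the minimal boundary length is $3$ rather than $4$ (as in the paper's sums), but this does not affect your argument.
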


\begin{proof}
Let us fix $k,m\ge 1$ throughout the proof, and define 
$\mathbf{0}:=(k,0)\in J$.  We will prove below the following claim:
there exists an absolute constant $0<p_\star<1/2$ such that
$$
	\mathbf{P}\Bigg[
	\Sigma^x(\{z:z^{\mathbf{0}}=x^{\mathbf{0}}\}) \ge \frac{3}{4}
	\Bigg] \ge \frac{1}{2}
$$
whenever $0<p<p_\star$: that is, when the noise is sufficiently small, the 
conditional distribution $\mathbf{P}[X_k^0\in\,\cdot\,|X_0,Y_1,\ldots,Y_k,
\{X_1^v,\ldots,X_k^v:|v|>m\}]$ assigns a large probability to the actually 
realized value of $X_k^0$ at least half of the time (recall
Lemma \ref{lem:rbim}).  Let us complete
the proof assuming this claim.  Note that
$\Sigma^x(\{z:z^{\mathbf{0}}=x^{\mathbf{0}}\}) \ge 3/4$
implies $|\Sigma^x(\{z:z^{\mathbf{0}}=1\}) -
\Sigma^x(\{z:z^{\mathbf{0}}=-1\})| \ge 1/2$.
Thus the above claim implies that
$$
	\mathbf{P}\bigg[|\mathbf{E}[X_k^0|X_0,Y_1,\ldots,Y_k,
	\{X_1^v,\ldots,X_k^v:|v|>m\}]| \ge \frac{1}{2}
	\bigg|X_0,\ldots,X_k\bigg] \ge \frac{1}{2},
$$
where we have used Lemma \ref{lem:rbim} and the fact that $\{X^q\}$ and 
$\{\xi^{qr}\}$ are independent. The proof is now completed by a 
straightforward estimate.

It remains to prove the claim.  To this end, we use a Peierls 
argument.  Fix for the time being configurations $x,z\in\{-1,1\}^J$. 
For any $J'\subseteq J$, define the boundary edges
$$
	\mathfrak{E}J':=
	\{\{q,r\}:q\in J',~r\in (J\backslash J')\cup\partial
        J,~\|q-r\|=1\}.
$$
A subset $J'\subseteq J$ is called a \emph{contour} if it is simply 
connected, $z^q=-x^q$ for all $\{q,r\}\in\mathfrak{E}J'$ with $q\in J'$, 
and $z^r=x^r$ if in addition $r\in J\backslash J'$.  We will denote the 
set of contours as $\mathfrak{C}_{z,x}$ (note that the definition of a 
contour depends on the given configurations $z$ and $x$).
If $z^{\mathbf{0}}=-x^{\mathbf{0}}$, then there must exist a 
contour $J'\in\mathfrak{C}_{z,x}$ such that $\mathbf{0}\in J'$: indeed, 
construct $J'$ by choosing the maximal connected subset of $J$ 
such that $\mathbf{0}\in J'$ and $z^q=-x^q$ for all $q\in J'$, and then 
`fill in the holes' to make $J'$ simply connected.  Thus
$$
	\Sigma^x(\{z:z^{\mathbf{0}}=-x^{\mathbf{0}}\}) \le
	\Sigma^x(\{z:\exists\,J'\in\mathfrak{C}_{z,x},~
	\mathbf{0}\in J'\}) \le
	\sum_{J'\ni\mathbf{0}}
	\Sigma^x(\{z:J'\in\mathfrak{C}_{z,x}\}).
$$
Now note that, by the definition of a contour, $x^qz^q=-1$
whenever $\{q,r\}\in\mathfrak{E}J'$ with $q\in J'$, and $x^qx^rz^qz^r=-1$ 
if in addition $r\in J\backslash J'$.  Thus the existence of a contour 
implies the presence of many such edges.  The basic idea of the proof
is that the probability that this occurs is small under $\Sigma^x$
due to Lemma \ref{lem:rbim}.  Let us make this precise.

\begin{lem}
\label{lem:peierls}
For any $J'\subseteq J$, we have
$$
	\Sigma^x(\{z:J'\in\mathfrak{C}_{z,x}\}) \le
	\exp\bigg(
	-2\beta\sum_{\{q,r\}\in\mathfrak{E}J'}\xi^{qr}\bigg).
$$
\end{lem}

\begin{proof}
Assume without loss of generality that $J'$ is simply connected.
Let us use for simplicity the convention that $z^r=x^r$ 
for $r\in\partial J$.  Define the events
\begin{align*}
	A &= \{z:z^q=-x^q\mbox{ and }z^r=x^r\mbox{ for }
	\{q,r\}\in\mathfrak{E}J',~q\in J'\} ,\\
	B &= \{z:z^q=x^q\mbox{ and }z^r=x^r\mbox{ for }
	\{q,r\}\in\mathfrak{E}J',~q\in J'\}.
\end{align*}
Then we evidently have by Lemma \ref{lem:rbim}
$$
	\Sigma^x(\{z:J'\in\mathfrak{C}_{z,x}\}) =
	\Sigma^x(A) \le
	\frac{\Sigma^x(A)}{\Sigma^x(B)}.	
$$
An elementary computation shows that
$$
	\frac{\Sigma^x(A)}{\Sigma^x(B)} = 
	\exp\bigg(
	-2\beta\sum_{\{q,r\}\in\mathfrak{E}J'}\xi^{qr}\bigg)\,
	\frac{\sum_{z}\mathbf{1}_A(z)\exp(\beta\sum_{\{q,r\}\subseteq J'
	:\|q-r\|=1}
	\xi^{qr}x^qx^rz^qz^r)}{
	\sum_{z}\mathbf{1}_B(z)\exp(\beta\sum_{\{q,r\}\subseteq J'
        :\|q-r\|=1}
        \xi^{qr}x^qx^rz^qz^r)
	}.
$$
But the ratio in this expression is unity, as the exponential term inside
the sums is invariant under the transformation $z^q\mapsto -z^q$ for all
$q\in J'$.  The proof is complete.
\end{proof}

Lemma \ref{lem:peierls} allows us to estimate
\begin{align*}
	&\mathbf{P}\Bigg[
	\Sigma^x(\{z:z^{\mathbf{0}}=-x^{\mathbf{0}}\})] \ge 
	\sum_{J'\ni\mathbf{0}\mbox{ \small{simply connected}}}
	e^{-\beta|\mathfrak{E}J'|}
	\Bigg] \\
	&
	\le 
	\mathbf{P}\Bigg[
	\sum_{J'\ni\mathbf{0}\mbox{ \small{simp.\ conn.}}}
	\exp\bigg(
	-2\beta\sum_{\{q,r\}\in\mathfrak{E}J'}\xi^{qr}\bigg)
	\ge 
	\sum_{J'\ni\mathbf{0}\mbox{ \small{simp.\ conn.}}}
	e^{-\beta|\mathfrak{E}J'|}\Bigg] \\
	&\le 
	\mathbf{P}\Bigg[\exists\,J'\ni\mathbf{0}\mbox{ simply connected with }
	\sum_{\{q,r\}\in\mathfrak{E}J'}\xi^{qr}\le
	\frac{|\mathfrak{E}J'|}{2}
	\Bigg] \\
	&\le 
	\sum_{J'\ni\mathbf{0}\mbox{ \small{simply connected}}}
	\mathbf{P}\Bigg[
	\sum_{\{q,r\}\in\mathfrak{E}J'}\xi^{qr}\le
	\frac{|\mathfrak{E}J'|}{2}
	\Bigg].
\end{align*}
Using a standard combinatorial result \cite[Lemma 6.13]{Geo11}
$$
	|\{J'\subseteq J\mbox{ simply connected}
	:\mathbf{0}\in J',~|\mathfrak{E}J'|=l\}| \le l3^{l-1},
$$
as well as the simple bound
$$
	\mathbf{P}\Bigg[
	\sum_{\{q,r\}\in\mathfrak{E}J'}\xi^{qr}\le
	\frac{|\mathfrak{E}J'|}{2}
	\Bigg] =
	\mathbf{P}\Bigg[
	\mathrm{Bin}(|\mathfrak{E}J'|,1-p)
	\le
	\frac{3}{4}\,
	|\mathfrak{E}J'|
	\Bigg] \le
	2^{|\mathfrak{E}J'|} p^{|\mathfrak{E}J'|/4},
$$
we can conclude that
$$
	\mathbf{P}\Bigg[
	\Sigma^x(\{z:z^{\mathbf{0}}=-x^{\mathbf{0}}\})] \ge c_1
	\Bigg] \le c_2,\quad
	c_1 =
	\sum_{l=3}^\infty
	l3^{l-1}\bigg(\frac{p}{1-p}\bigg)^{l/2},\quad
	c_2 =
	\sum_{l=3}^\infty
	l3^{l-1}
	2^{l} p^{l/4}.
$$
But we can now evidently choose $p_\star>0$ sufficiently small such
that $c_1\le 1/4$ and $c_2\le 1/2$ whenever $p\le p_\star$,
which readily yields the desired estimate.
\end{proof}

\subsection{Proof of Theorem \ref{thm:phtrans}: high noise}

We now turn to proving that the filter is stable when the noise is strong.  
We begin by noting that it suffices to prove stability of 
finite-dimensional marginals of the filter.

\begin{lem}
\label{lem:finapprox}
Suppose that
$$
	\mathbf{E}|
	\mathbf{E}[f(X_k^{-m},\ldots,X_k^m)|X_0,Y_1,\ldots,Y_k]-
	\mathbf{E}[f(X_k^{-m},\ldots,X_k^m)|Y_1,\ldots,Y_k]
	|\xrightarrow{k\to\infty}0
$$
for every function $f$ and every $m\ge 1$.  Then the filter is stable.
\end{lem}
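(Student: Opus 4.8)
The plan is to deduce stability for an arbitrary measurable set $A\subseteq\{-1,1\}^{\mathbb{Z}}$ from the finite-dimensional hypothesis by approximating the indicator $\mathbf{1}_A$ by cylinder functions. Write $\mathcal{F}_m:=\sigma(X^v:|v|\le m)$ for the $\sigma$-algebra generated by the coordinates in $[-m,m]$, so that $\mathcal{F}_m\uparrow\mathcal{F}$, the full Borel $\sigma$-algebra of $\{-1,1\}^{\mathbb{Z}}$. For the fixed set $A$, I would set $f_m:=\mathbf{E}_\lambda[\mathbf{1}_A|\mathcal{F}_m]$, which is a $[0,1]$-valued function of $(x^v)_{|v|\le m}$ only, hence an admissible test function of exactly the type appearing in the hypothesis. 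By Lévy's upward martingale convergence theorem, $f_m\to\mathbf{1}_A$ in $L^1(\lambda)$ as $m\to\infty$.

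With this choice in hand, the core of the argument is the triangle inequality
\begin{align*}
	&\mathbf{E}\big|\mathbf{P}[X_k\in A|X_0,Y_1,\ldots,Y_k]-\mathbf{P}[X_k\in A|Y_1,\ldots,Y_k]\big|\\
	&\quad\le \mathbf{E}\big|\mathbf{E}[(\mathbf{1}_A-f_m)(X_k)|X_0,Y_1,\ldots,Y_k]\big|
	+\mathbf{E}\big|\mathbf{E}[f_m(X_k)|X_0,Y_1,\ldots,Y_k]-\mathbf{E}[f_m(X_k)|Y_1,\ldots,Y_k]\big|\\
	&\qquad{}+\mathbf{E}\big|\mathbf{E}[(f_m-\mathbf{1}_A)(X_k)|Y_1,\ldots,Y_k]\big|.
\end{align*}
For every fixed $m$ the middle term tends to $0$ as $k\to\infty$ by the hypothesis applied to $f=f_m$. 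The two outer terms are handled identically: by conditional Jensen and the tower property, each is bounded by $\mathbf{E}|\mathbf{1}_A(X_k)-f_m(X_k)|$, and since $X_k\sim\lambda$ for every $k$ by stationarity of $\mathbf{P}$, this equals $\|\mathbf{1}_A-f_m\|_{L^1(\lambda)}$, which tends to $0$ as $m\to\infty$.

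The proof is then completed in the usual order of limits: given $\varepsilon>0$, first choose $m$ so large that the two outer terms are each below $\varepsilon/3$, and only then let $k\to\infty$ to eliminate the middle term. The point to emphasize is that the outer-term bound $\|\mathbf{1}_A-f_m\|_{L^1(\lambda)}$ is \emph{uniform in $k$}; it is precisely this uniformity that licenses fixing $m$ before sending $k\to\infty$, and it is furnished by stationarity, which forces the marginal law of $X_k$ to equal $\lambda$ for all $k$. This is the only genuinely load-bearing observation—the main (minor) obstacle being to arrange the approximation so that the error does not depend on $k$—while the remainder is the martingale approximation and a routine $\varepsilon$-argument.
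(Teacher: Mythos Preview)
Your proof is correct and is essentially identical to the paper's own argument: both define $f_m$ as the conditional expectation of $\mathbf{1}_A$ given the coordinates in $[-m,m]$, split via the triangle inequality into a middle term handled by the hypothesis and outer terms bounded by $\mathbf{E}|\mathbf{1}_A(X_k)-f_m(X_k)|$, invoke stationarity to make the latter independent of $k$, and finish with martingale convergence as $m\to\infty$. The only cosmetic differences are that the paper combines your two outer terms into a single term with coefficient $2$ and phrases the conclusion via $\limsup$ rather than an explicit $\varepsilon/3$ argument.
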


\begin{proof}
Fix any measurable subset $A$ of $\{-1,1\}^{\mathbb{Z}}$ and define
$$
	F_m = f_m(X_0^{-m},\ldots,X_0^m) := \mathbf{P}[X_0\in A|
	X_0^{-m},\ldots,X_0^m].
$$
We can estimate
\begin{align*}
	&\mathbf{E}|
	\mathbf{P}[X_k\in A|X_0,Y_1,\ldots,Y_k]-
	\mathbf{P}[X_k\in A|Y_1,\ldots,Y_k]
	| \le
	2\,\mathbf{E}|f_m(X_k^{-m},\ldots,X_k^m)-\mathbf{1}_A(X_k)| 
	\\ &\qquad\quad\mbox{}+
	\mathbf{E}|
	\mathbf{E}[f_m(X_k^{-m},\ldots,X_k^m)|X_0,Y_1,\ldots,Y_k]-
	\mathbf{E}[f_m(X_k^{-m},\ldots,X_k^m)|Y_1,\ldots,Y_k]
	|.
\end{align*}
By stationarity the first term does not depend on $k$, and the assumption 
gives
$$
	\limsup_{k\to\infty}
	\mathbf{E}|
	\mathbf{P}[X_k\in A|X_0,Y_1,\ldots,Y_k]-
	\mathbf{P}[X_k\in A|Y_1,\ldots,Y_k]
	| \le
	2\,\mathbf{E}|F_m-\mathbf{1}_A(X_0)|.
$$
Letting $m\to\infty$ and using the martingale convergence theorem
concludes the proof.
\end{proof}

We will in fact prove a much stronger \emph{pathwise} bound than is 
required by the above lemma. The basic tool we will use for this purpose 
is the Dobrushin comparison theorem \cite[Theorem 8.20]{Geo11}, which we 
state here in a convenient form.

\begin{thm}[Dobrushin comparison theorem]
\label{thm:dobr}
Let $\mu$ and $\nu$ be probability measures on 
$\{-1,1\}^I$ for some countable set $I$, and choose measurable
functions $m_i,n_i$ such that
$$
	m_i(X) = \mu(X^i=1|\{X^j:j\ne i\}),\qquad
	n_i(X) = \nu(X^i=1|\{X^j:j\ne i\}).
$$
Define
$$
	b_i := \sup_x|m_i(x)-n_i(x)|,\qquad
	C_{ji} := \sup_{x,z:x^v=z^v\mathrm{~for~}v\ne i}
	|m_j(x)-m_j(z)|,
$$
and assume that
$$
	\sup_{j\in I}\sum_{i\in I} C_{ji}<1.
$$
Then $D:=\sum_{n=0}^\infty C^n$ exists (in the sense of matrix algebra), 
and
$$
	|\mu(f)-\nu(f)| \le \sum_{j\in J}\sum_{i\in I} D_{ji}b_i
$$
whenever $J$ is a finite set, $f(x)$ depends only on $\{x^j:j\in J\}$, and 
$0\le f\le 1$.
\end{thm}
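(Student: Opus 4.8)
The plan is to deduce the stated inequality from a sharper \emph{oscillation bound}. Writing $\delta_i g := \sup\{|g(x)-g(z)| : x^v=z^v \text{ for } v\ne i\}$ for the oscillation of a bounded function $g$ at site $i$, I would show that
\[
	|\mu(f)-\nu(f)| \le \sum_{j\in I}\delta_j(f)\,(Db)_j,
	\qquad (Db)_j := \sum_{i\in I}D_{ji}b_i ,
\]
for every bounded $f$ depending on finitely many coordinates. The theorem is then immediate: if $0\le f\le 1$ depends only on $\{x^j:j\in J\}$ then $\delta_j(f)\le\mathbf 1_{j\in J}$, so the right-hand side is at most $\sum_{j\in J}\sum_i D_{ji}b_i$. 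The engine of the proof is the family of single-site conditional-expectation operators $\Pi^i_\mu g := \mu(g\mid\{X^v:v\ne i\})$, which are functions of $\{x^v:v\ne i\}$ alone and satisfy $\mu(\Pi^i_\mu g)=\mu(g)$; the operator $\Pi^i_\nu$ is defined analogously.

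First I would record two elementary single-site estimates. Expanding $\Pi^i_\mu g(x)=\sum_{a=\pm1}g(x^{i\to a})\,\mu(X^i=a\mid x_{\ne i})$ and using that the two conditional weights sum to one, a short computation gives the \emph{discrepancy bound} $\|\Pi^i_\mu g-\Pi^i_\nu g\|_\infty \le b_i\,\delta_i g$ (the weights differ by exactly $m_i-n_i$, which is controlled by $b_i$). A second, similar expansion — splitting the change of $\Pi^i_\mu g$ under a perturbation at a site $j\ne i$ into the change of $g$ and the change of the conditional weight — yields the \emph{oscillation-transfer bound} $\delta_i(\Pi^i_\mu g)=0$ and $\delta_j(\Pi^i_\mu g)\le \delta_j g + C_{ij}\,\delta_i g$ for $j\ne i$, where crucially $C_{ii}=0$ because $m_i$ does not depend on the $i$-th coordinate.

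The heart of the argument is a telescoping sweep controlled by a potential. Since $\sup_j\sum_i C_{ji}=c<1$, the matrix $C$ has $\|C\|_\infty=c$, so $D=\sum_{n\ge0}C^n=(I-C)^{-1}$ converges entrywise with $\sum_i D_{ji}\le (1-c)^{-1}$; set $u:=Db\ge 0$, so that $(I-C)u=b$, i.e.\ $u_i-\sum_j C_{ij}u_j=b_i$. Define the potential $\Phi(g):=\sum_j\delta_j(g)\,u_j$. Applying a single update at site $i$ to the exact identity
\[
	\mu(g)-\nu(g) = \big[\mu(\Pi^i_\mu g)-\nu(\Pi^i_\mu g)\big] + \nu\big(\Pi^i_\mu g-\Pi^i_\nu g\big),
\]
the last term is bounded by $b_i\delta_i g$ by the discrepancy bound, while the first term compares $\mu,\nu$ on the new function $g'=\Pi^i_\mu g$. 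The oscillation-transfer bound together with $(I-C)u=b$ and $C_{ii}=0$ shows that the potential drops by exactly the cost paid: $\Phi(g')\le\Phi(g)-b_i\delta_i(g)$. Iterating over a sequence $i_1,i_2,\dots$ and writing $g_n=\Pi^{i_n}_\mu\cdots\Pi^{i_1}_\mu f$ therefore gives the exact decomposition $\mu(f)-\nu(f)=[\mu(g_n)-\nu(g_n)]+\sum_{k\le n}\varepsilon_{i_k}$ with $\sum_k|\varepsilon_{i_k}|\le\Phi(f)-\Phi(g_n)\le\Phi(f)=\sum_j\delta_j(f)u_j$.

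The remaining, and main, obstacle is to show that the residual $\mu(g_n)-\nu(g_n)$ vanishes along a suitable sweep, so that $\mu(f)-\nu(f)=\sum_k\varepsilon_{i_k}$ and the oscillation bound follows. Summing the transfer bound and using the row-sum condition gives $\sum_j\delta_j(g_k)\le\sum_j\delta_j(g_{k-1})-(1-c)\,\delta_{i_k}(g_{k-1})$, so the total oscillation is non-increasing and strictly decreases whenever a site of positive oscillation is updated; bounding $|\mu(g_n)-\nu(g_n)|\le\mathrm{osc}(g_n)\le\sum_j\delta_j(g_n)$ then forces the residual to $0$ provided the enumeration $i_1,i_2,\dots$ revisits every relevant site infinitely often. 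The delicate point is the bookkeeping for countable $I$: although $f$ depends on finitely many coordinates, each update can spread positive oscillation to neighbouring sites, so one must verify that a fair sweep still drives $\sum_j\delta_j(g_n)\to0$ (equivalently, that the Dobrushin contraction $\|C\|_\infty<1$ propagates through the infinite lattice). I would handle this by first proving the bound for finite $I$, where cyclic sweeps trivially exhaust the sites and the contraction is immediate, and then passing to countable $I$ by a monotone approximation through finite sub-volumes $\Lambda$, controlling the error with the uniform tail estimate $\sum_{i\notin\Lambda}D_{ji}b_i$ afforded by $\sum_i D_{ji}\le(1-c)^{-1}$.
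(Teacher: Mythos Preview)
The paper does not prove this theorem; it is quoted from \cite[Theorem~8.20]{Geo11} and used as a black box. Your approach---single-site Gibbs-sampler operators $\Pi^i_\mu$, the discrepancy bound $\|\Pi^i_\mu g-\Pi^i_\nu g\|_\infty\le b_i\delta_i g$, the oscillation-transfer bound $\delta_j(\Pi^i_\mu g)\le\delta_j g+C_{ij}\delta_i g$, and the potential $\Phi(g)=\sum_j\delta_j(g)u_j$ with $u=Db$---is exactly the F\"ollmer--Georgii route, and the potential drop $\Phi(\Pi^i_\mu g)\le\Phi(g)-b_i\delta_i(g)$ is correct. For finite $I$ your argument is complete.

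The one genuine gap is the passage to countable $I$. Your total-oscillation contraction shows $S_n:=\sum_j\delta_j(g_n)$ is non-increasing and $\sum_n\delta_{i_n}(g_{n-1})<\infty$, but for a general fair sweep on an infinite index set this does not by itself force $S_n\to 0$: oscillation can leak outward to new sites as fast as it is killed at visited ones. More seriously, your proposed fix---prove the bound for finite $I$ and ``pass through finite sub-volumes $\Lambda$''---does not work as written, because the single-site conditionals of the \emph{marginals} $\mu|_\Lambda,\nu|_\Lambda$ on $\{-1,1\}^\Lambda$ are not the restrictions of $m_i,n_i$, so the finite-$I$ result does not apply to them. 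A clean repair is to note that $\mu(g_n)=\mu(f)$ for all $n$ (as $\mu$ is $\Pi^i_\mu$-invariant) while $\nu(g_n)=(\nu\,T^{i_n}_\mu\cdots T^{i_1}_\mu)(f)$ with $T^i_\mu$ the single-site Gibbs sampler; Dobrushin's condition $\|C\|_\infty<1$ is precisely the hypothesis under which this sampler converges to the unique Gibbs measure $\mu$ for any sweep visiting each site infinitely often, whence the residual vanishes. Alternatively, Georgii's own proof avoids the residual by tracking the full oscillation \emph{vector} $(\delta_j(g_n))_j$ rather than its sum, and bounding it componentwise by iterates of $C$ acting on $(\delta_j f)_j$; this yields the estimate directly without needing $S_n\to 0$.
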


We will apply this result pathwise to compare the 
filters with and without conditioning on the initial condition.  To this 
end, we must compute the quantities that arise in the Dobrushin comparison 
theorem for suitably chosen regular conditional probabilities.

\begin{lem}
\label{lem:spec}
Fix any version of the regular conditional probabilities
$$
	\mu_{X,Y} := \mathbf{P}[X_0,\ldots,X_k\in\,\cdot\,|
	X_0,Y_1,\ldots,Y_k],\qquad
	\nu_{Y} := \mathbf{P}[X_0,\ldots,X_k\in\,\cdot\,|
	Y_1,\ldots,Y_k].
$$
Then there is a set $A$ with $\mathbf{P}[(X,Y)\in A]=1$ such that
for every $(x,y)\in A$
\begin{multline*}
	\mu_{x,y}(X_\ell^v=1|\{X_r^w:(r,w)\ne(\ell,v)\}) =
	\nu_{y}(X_\ell^v=1|\{X_r^w:(r,w)\ne(\ell,v)\}) = \\
	\frac{
	e^{\beta 
	\{\bar y_\ell^v X_{\ell-1}^v +
	\hat y_\ell^v X_\ell^{v+1} +
	\bar y_{\ell+1}^v X_{\ell+1}^v +
	\hat y_\ell^{v-1} X_\ell^{v-1}\}}
	}{
	e^{\beta 
	\{\bar y_\ell^v X_{\ell-1}^v +
	\hat y_\ell^v X_\ell^{v+1} +
	\bar y_{\ell+1}^v X_{\ell+1}^v +
	\hat y_\ell^{v-1} X_\ell^{v-1}\}}
	+
	e^{-\beta 
	\{\bar y_\ell^v X_{\ell-1}^v +
	\hat y_\ell^v X_\ell^{v+1} +
	\bar y_{\ell+1}^v X_{\ell+1}^v +
	\hat y_\ell^{v-1} X_\ell^{v-1}\}}
	}
\end{multline*}
for $1\le\ell<k$ and $v\in\mathbb{Z}$,
\begin{multline*}
	\mu_{x,y}(X_k^v=1|\{X_r^w:(r,w)\ne(k,v)\}) =
	\nu_{y}(X_k^v=1|\{X_r^w:(r,w)\ne(k,v)\}) = \\
	\frac{
	e^{\beta 
	\{\bar y_k^v X_{k-1}^v +
	\hat y_k^v X_k^{v+1} +
	\hat y_k^{v-1} X_k^{v-1}\}}
	}{
	e^{\beta 
	\{\bar y_k^v X_{k-1}^v +
	\hat y_k^v X_k^{v+1} +
	\hat y_k^{v-1} X_k^{v-1}\}}
	+
	e^{-\beta 
	\{\bar y_k^v X_{k-1}^v +
	\hat y_k^v X_k^{v+1} +
	\hat y_k^{v-1} X_k^{v-1}\}}
	}
\end{multline*}
for $v\in\mathbb{Z}$, and $\mu_{x,y}(X_0^v=1)=\mathbf{1}_{x_0^v=1}$
for $v\in\mathbb{Z}$, where $\beta:=\log\sqrt{(1-p)/p}$.
\end{lem}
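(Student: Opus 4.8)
The plan is to identify both conditional laws as Gibbs measures for one common nearest-neighbor interaction on the space-time slab $I=\{0,\ldots,k\}\times\mathbb{Z}$, and then to read off the single-site conditional probabilities directly from that interaction. The starting point is the same computation used in the proof of Lemma \ref{lem:findimred}: since $\hat Y_\ell^v=X_\ell^vX_\ell^{v+1}\hat\xi_\ell^v$ with $\mathbf{P}[\hat\xi_\ell^v=-1]=p$, one has $\mathbf{P}[\hat Y_\ell^v=y|X]=\sqrt{p(1-p)}\,e^{\beta yX_\ell^vX_\ell^{v+1}}$, and symmetrically $\mathbf{P}[\bar Y_\ell^v=y|X]=\sqrt{p(1-p)}\,e^{\beta yX_\ell^vX_{\ell-1}^v}$, with $\beta=\log\sqrt{(1-p)/p}$. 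Combined with the uniform i.i.d.\ prior on the spins and the conditional independence of the observations given $X$, the joint law of $(X_0,\ldots,X_k,Y_1,\ldots,Y_k)$ restricted to any finite box factorizes, up to a constant, as a product of edge weights $e^{\beta y^{qr}z^qz^r}$ over the \emph{observed} edges: the temporal edges $\{(\ell-1,v),(\ell,v)\}$ for $1\le\ell\le k$ (carrying $\bar y_\ell^v$) and the spatial edges $\{(\ell,v),(\ell,v+1)\}$ for $1\le\ell\le k$ (carrying $\hat y_\ell^v$). Crucially, the spatial edges at time $0$ and the temporal edges leaving time $k$ are \emph{not} observed, so they do not appear.

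First I would fix versions of the regular conditional probabilities and compute the single-site conditional of $\nu_y$ at a site $(\ell,v)$ given all other spins. By the factorization the only weights depending on $X_\ell^v$ are those on the edges incident to $(\ell,v)$, so the conditional law of $X_\ell^v$ is proportional to $e^{\beta X_\ell^vS}$, where $S$ is the sum of the neighbor contributions. For an interior time $1\le\ell<k$ the four incident observed edges give $S=\bar y_\ell^vX_{\ell-1}^v+\hat y_\ell^vX_\ell^{v+1}+\bar y_{\ell+1}^vX_{\ell+1}^v+\hat y_\ell^{v-1}X_\ell^{v-1}$ (the forward temporal edge contributes $\bar y_{\ell+1}^vX_{\ell+1}^v$ because $\bar Y_{\ell+1}^v=X_{\ell+1}^vX_\ell^v\bar\xi_{\ell+1}^v$, and the left spatial edge contributes $\hat y_\ell^{v-1}X_\ell^{v-1}$ because $\hat Y_\ell^{v-1}=X_\ell^{v-1}X_\ell^v\hat\xi_\ell^{v-1}$); at the terminal time $\ell=k$ the forward temporal term drops since $Y_{k+1}$ is not observed, leaving the three-term exponent. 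Normalizing $e^{\beta X_\ell^vS}$ over $X_\ell^v\in\{-1,1\}$ produces exactly the logistic expressions in the statement.

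Next I would compare $\mu_{x,y}$ with $\nu_y$. The clean observation is that $\mu_{x,y}$ is nothing but $\nu_y$ further conditioned on the event $\{X_0=x_0\}$, since conditioning on $(X_0,Y_1,\ldots,Y_k)$ is the same as conditioning on $Y_1,\ldots,Y_k$ and then on $X_0$. For any $\ell\ge 1$ the pinned slice $X_0$ is measurable with respect to the remaining spins $\{X_r^w:(r,w)\ne(\ell,v)\}$, so conditioning $\nu_y$ on $\{X_0=x_0\}$ does not alter the single-site conditional at $(\ell,v)$: the two specifications coincide and equal the logistic formula above. At the bottom slice $\ell=0$, on the other hand, $\mu_{x,y}$ pins the configuration, giving $\mu_{x,y}(X_0^v=1)=\mathbf{1}_{x_0^v=1}$. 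This asymmetry at $\ell=0$ is precisely the origin of the only nonzero coefficients $b_i$ in the subsequent application of the Dobrushin comparison theorem (Theorem \ref{thm:dobr}).

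I expect the main obstacle to be measure-theoretic rather than computational: making the preceding \emph{finite}-box Bayes computation rigorous as a statement about single-site conditionals of genuinely infinite-dimensional measures on $\{-1,1\}^I$, and producing the single full-measure set $A$ on which all the identities hold simultaneously. Since each spin has only finitely many incident edges, the conditional of $X_\ell^v$ given all other spins should be obtained by conditioning first on an increasing sequence of finite boxes and passing to the limit, using the locality and conditional independence of the model---exactly the structure already exploited in Lemmas \ref{lem:findimred} and \ref{lem:rbim}---together with a martingale argument to control the exceptional null sets uniformly in $(\ell,v)$. Once the single-site conditionals are pinned down almost surely, the remaining identifications are the routine normalizations indicated above.
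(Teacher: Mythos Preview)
Your proposal is correct and follows essentially the same route as the paper: identify the factorized Ising-type structure (this is the content of Lemma~\ref{lem:rbim}), read off the single-site conditionals from the edges incident to $(\ell,v)$, and then pass to a common full-measure set by countability. The one place where the paper is more efficient is the measure-theoretic step you flag as the main obstacle: rather than approximating by finite boxes and invoking martingale convergence, the paper simply cites the elementary fact that $\mu_{X,Y}(X_\ell^v=1\mid\{X_r^w:(r,w)\ne(\ell,v)\})=\mathbf{P}[X_\ell^v=1\mid X_0,Y_1,\ldots,Y_k,\{X_r^w:(r,w)\ne(\ell,v)\}]$ (iterated conditioning under a regular conditional probability, \cite[p.\ 95--96]{vW83} or \cite[Lemma 3.4]{TvH13}), after which the explicit formula follows directly from Lemma~\ref{lem:rbim} without any limiting argument.
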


\begin{proof}
It is an elementary fact that
\begin{align*}
	\mu_{X,Y}(X_\ell^v=1|\{X_r^w:(r,w)\ne(\ell,v)\}) &=
	\mathbf{P}[X_\ell^v=1|X_0,Y_1,\ldots,Y_k,\{X_r^w:(r,w)\ne(\ell,v)\}],\\
	\nu_{Y}(X_\ell^v=1|\{X_r^w:(r,w)\ne(\ell,v)\}) &=
	\mathbf{P}[X_\ell^v=1|Y_1,\ldots,Y_k,\{X_r^w:(r,w)\ne(\ell,v)\}],
\end{align*}
see \cite[p.\ 95--96]{vW83} or \cite[Lemma 3.4]{TvH13}.  That each 
statement in the Lemma holds for $\mathbf{P}$-a.e.\ $(x,y)$ can therefore 
be read off from Lemma \ref{lem:rbim}.  As there are countably many 
statements, they can be assumed to hold simultaneously on a set $A$ of 
unit measure.
\end{proof}

We can now complete the proof of filter stability for $p>p^\star$.

\begin{prop}
\label{prop:dob}
There exists an absolute constant $0<p^\star<1/2$ such that
$$
	|\mathbf{E}[f(X_k^{-m},\ldots,X_k^m)|X_0,Y_1,\ldots,Y_k]-
	\mathbf{E}[f(X_k^{-m},\ldots,X_k^m)|Y_1,\ldots,Y_k]|
	\le (8m+4)\|f\|_\infty e^{-k}
$$
a.s.\ for every $k,m\ge 1$ and function $f$ whenever $p^\star<p\le 1/2$.
\end{prop}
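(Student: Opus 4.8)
The plan is to apply the Dobrushin comparison theorem (Theorem~\ref{thm:dobr}) pathwise, comparing $\mu:=\mu_{x,y}$ with $\nu:=\nu_y$ on the space-time slab $I:=\{0,\ldots,k\}\times\mathbb{Z}$, using the explicit single-site conditionals furnished by Lemma~\ref{lem:spec}. The decisive structural input is that these conditionals \emph{coincide} for $\mu$ and $\nu$ at every site of time $\ell\ge 1$: the two measures differ only through the additional conditioning on $X_0$, so in the notation of Theorem~\ref{thm:dobr} one has $b_{(\ell,v)}=0$ whenever $\ell\ge 1$, while on the time-zero slice $\mu$ is deterministic and hence $b_{(0,v)}\le 1$. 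Thus the entire discrepancy between the conditioned and unconditioned filters is localized on $\{0\}\times\mathbb{Z}$, which is distance $k$ (in time) from the sites on which $f$ depends.

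Next I would compute the interdependence matrix $C$ from the conditionals of $\mu$. Writing each conditional in the sigmoid form $m_{(\ell,v)}=\sigma(2\beta W)$ with $\sigma(u)=(1+e^{-u})^{-1}$ and $W$ a sum of at most four neighbor terms in $\{-1,+1\}$, flipping a single neighbor shifts $W$ by $\pm2$, so that
\[
	C_{ji}\le \sup_{W}\bigl|\sigma(2\beta(W+1))-\sigma(2\beta(W-1))\bigr|
	\le \tanh\beta = 1-2p ,
\]
and $C_{ji}=0$ unless $i$ is a nearest neighbor of $j$ in $\mathbb{Z}^2$. Since each site has at most four neighbors and the time-zero rows of $C$ vanish ($\mu$ is deterministic there), $\sup_j\sum_i C_{ji}\le 4(1-2p)=:c$. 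Because $c\to 0$ as $p\to1/2$, one may fix $0<p^\star<1/2$ so that $c\le e^{-1}$ for all $p^\star<p\le1/2$, which in particular secures the Dobrushin condition $c<1$.

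The heart of the argument is then a light-cone estimate. The set $J=\{(k,v):|v|\le m\}$ on which $f$ depends lies on the time-$k$ slice, whereas the only nonzero $b_i$ sit on the time-zero slice; and since $C$ couples only space-time nearest neighbors, each factor of $C$ changes the time coordinate by at most one. Hence $(C^n)_{ji}=0$ for every $n<k$ when $j\in J$ and $i$ lies at time zero, so $D_{ji}=\sum_{n\ge k}(C^n)_{ji}$. Combining this with the submultiplicativity bound $\sum_{i\in I}(C^n)_{ji}\le c^n$ gives, for each $j\in J$,
\[
	\sum_{i}D_{ji}b_i \le \sum_{i:\,\mathrm{time}\,0}D_{ji}
	\le \sum_{n\ge k}c^n = \frac{c^k}{1-c}\le 2e^{-k}
	\qquad(c\le e^{-1}).
\]
Summing over the $2m+1$ sites of $J$ and passing from $0\le f\le1$ to general $f$ by the affine normalization $g=(f+\|f\|_\infty)/(2\|f\|_\infty)$ (which costs a factor $2\|f\|_\infty$) yields exactly $(8m+4)\|f\|_\infty e^{-k}$. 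Lemma~\ref{lem:spec} guarantees that the conditionals have the stated form on a set of full measure, delivering the almost-sure bound.

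I expect the only genuine subtlety to be the light-cone observation: without exploiting both that the interactions are strictly nearest-neighbor in space-time and that the discrepancy $b$ is confined to the single slice $\{0\}\times\mathbb{Z}$, the Dobrushin comparison would yield only a \emph{bounded} influence of the initial condition rather than one decaying exponentially in $k$. The computation of $C_{ji}$, the identity $\tanh\beta=1-2p$, and the bookkeeping needed to land on precisely the constant $(8m+4)$ are routine by comparison, and require only that $p^\star$ be taken close enough to $1/2$ that $c=4(1-2p)\le e^{-1}$.
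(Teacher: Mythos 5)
Your proposal is correct and follows essentially the same route as the paper: a pathwise application of the Dobrushin comparison theorem (Theorem~\ref{thm:dobr}) with the conditionals of Lemma~\ref{lem:spec}, the discrepancy $b$ supported on the time-zero slice, and $f$ supported on the time-$k$ slice. The only differences are cosmetic: you bound $C_{ji}$ by $\tanh\beta=1-2p$ via the single-flip oscillation of the sigmoid where the paper uses the cruder total-oscillation bound $\tanh(4\beta)$ (this only changes the admissible $p^\star$), and you extract the factor $e^{-k}$ from the light-cone identity $(C^n)_{ji}=0$ for $n<k$ rather than from the paper's exponentially weighted matrix norm $\|C\|_{*}=\sup_j\sum_i e^{\|j-i\|}C_{ji}$ --- two equivalent ways of encoding the same geometric decay.
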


\begin{proof}
We apply Theorem \ref{thm:dobr} with $I=\{0,\ldots,k\}\times\mathbb{Z}$ 
and $\mu=\mu_{x,y}$, $\nu=\nu_y$ as defined in Lemma \ref{lem:spec}.
Evidently $b_{(0,v)}\le 1$ and $b_{(\ell,v)}=0$ for $1\le\ell\le k$ and 
$v\in\mathbb{Z}$, so we have 
$$
	|\mu_{x,y}(f(X_k^{-m},\ldots,X_k^m))-
	\nu_y(f(X_k^{-m},\ldots,X_k^m))| \le
	2\|f\|_\infty\sum_{w=-m}^m\sum_{v\in\mathbb{Z}}D_{(k,w)(0,v)}
$$
by Theorem \ref{thm:dobr} provided that the condition on the matrix $C$ is 
satisfied.

We proceed to estimate the matrix $C$ using Lemma \ref{lem:spec}.  
Evidently
$$
	C_{(\ell',v')(\ell,v)}=0\quad\mbox{if}\quad
	\ell'=0\quad\mbox{or}\quad|\ell'-\ell|+|v'-v|>1
	\quad\mbox{or}\quad\ell=\ell',~v=v'.
$$
On the other hand, note that by Lemma \ref{lem:spec}
$$
	\frac{e^{-4\beta}}{e^{4\beta}+e^{-4\beta}} \le
	\mu_{x,y}(X_\ell^v=1|\{X_r^w:(r,w)\ne(\ell,v)\})
	\le \frac{e^{4\beta}}{e^{4\beta}+e^{-4\beta}},
$$
so we can estimate
$$
	C_{ji} \le \tanh(4\beta) < 1\quad\mbox{for all }i,j\in I.
$$
It follows readily that
$$
	\|C\|_* :=
	\sup_{j\in I}\sum_{i\in I}e^{\|j-i\|}C_{ji}
	\le
	4e\tanh(4\beta).
$$
We can now evidently choose $0<p^\star<1/2$ such that 
$4e\tanh(4\beta)<1/2$ for $p^\star<p\le 1/2$.  Then the condition of
Theorem \ref{thm:dobr} is satisfied.  Moreover, as $\|\cdot\|_*$ is a 
matrix norm
$$
	\|D\|_* \le
	\sum_{n=0}^\infty \|C\|_*^n \le 2.
$$
Thus we obtain
\begin{align*}
	&|\mu_{x,y}(f(X_k^{-m},\ldots,X_k^m))-
	\nu_y(f(X_k^{-m},\ldots,X_k^m))| \\ &\quad\le
	(4m+2)\|f\|_\infty e^{-k}
	\max_{w=-m,\ldots,m}
	\sum_{v\in\mathbb{Z}}
	e^{\|(k,w)-(0,v)\|}
	D_{(k,w)(0,v)} \\
	&\quad
	\le
	(4m+2)\|D\|_*\|f\|_\infty e^{-k}
	\le
	(8m+4)\|f\|_\infty e^{-k}.
\end{align*}
As our estimates are valid for $\mathbf{P}$-a.e.\ $(x,y)$,
the proof is complete.
\end{proof}

\begin{rem}
It is natural to conjecture that one can choose $p_\star=p^\star$ in 
Theorem \ref{thm:phtrans}.  While we certainly believe this to be true, we 
were not able to prove this fact using standard methods.  The difficulty 
can be seen in Lemma \ref{lem:rbim}, as we presently explain.

Lemma \ref{lem:rbim} shows that that the conditional distribution of 
$X_1,\ldots,X_n$ given $Y_1,\ldots,Y_n$ can be viewed as an Ising model in 
the spin variables $\sigma^q:=x^qz^q$ with independent random interactions 
$\xi^{qr}$.  An Ising model is called ferromagnetic if all the 
interactions are positive.  In the ferromagnetic case, it is standard to 
establish the existence of a unique phase transition point by monotonicity 
arguments \cite[p.\ 100]{Geo11}.  Unfortunately, while our model is 
`ferromagnetic on average' as 
$\mathbf{P}[\xi^{qr}=1]>\mathbf{P}[\xi^{qr}=-1]$, there are always 
infinitely many interactions of either sign.  Thus correlation 
inequalities cannot be used, and in their absence it is not clear how to 
prove the existence of a simple phase boundary.  Monotonicity arguments 
will play a central role in section \ref{sec:mono} below to 
rule out conditional phase transitions in a somewhat different context.

While we have made no attempt to optimize the estimates for $p_\star$ and 
$p^\star$ that can be extracted from the proof of Theorem 
\ref{thm:phtrans}, the methods used here are not expected to yield 
realistic values of these constants.  On the other hand, the strong 
control provided by the Dobrushin comparison theorem has proved to be a 
powerful tool for the quantitative analysis of filtering algorithms in 
high dimension, cf.\ \cite{RvH13,RvH13b}.  
\end{rem}

\section{Symmetry breaking and observability}
\label{sec:obs}

Theorem \ref{thm:phtrans} shows that inheritance of ergodicity under 
conditioning cannot be taken for granted in infinite dimension even when 
the model is locally nondegenerate.  Are such phenomena prevalent in 
infinite dimension, or are they restricted to some carefully constructed 
examples?  We would like to understand in what situations 
such phenomena can be ruled out, both from the mathematical perspective 
and in view of the importance of filter stability (as well as 
spatial decay of correlations in infinite dimension) for the performance 
of practical filtering algorithms \cite{RvH13}.

It is not difficult to understand the mechanism that causes the filter to 
be unstable in Theorem \ref{thm:phtrans}.  In this model, the observations 
possess a global symmetry: the conditional law of $Y$ is unchanged under 
the transformation $X\mapsto -X$.  This symmetry renders the filter 
trivially unstable in the absence of observation noise, in precise analogy 
with Example \ref{ex:bla}.  In the finite-dimensional case, however, 
Theorem \ref{thm:vH09c} shows that the addition of any observation noise 
suffices to ensure that ergodicity of the underlying model is not broken 
by the additional symmetry introduced by conditioning.  The surprise in 
infinite dimension is that the qualitative effect of the added symmetry 
still persists in the presence of observation noise.  Thus local 
nondegeneracy in itself does not suffice to ensure the inheritance of 
ergodicity under conditioning.

On the other hand, the phenomenon exhibited in Theorem \ref{thm:phtrans} 
evidently cannot arise in models that do not possess observation 
symmetries.  It seems natural to conjecture that the presence of such 
symmetries is the \emph{only} possible obstruction to inheritance of 
ergodicity under conditioning: that is, inheritance of ergodicity is 
ensured once observation symmetries are ruled out.  It is not entirely 
obvious, however, how such a principle can be rigorously formulated.  On 
the other hand, even in the absence of a general definition, this 
intuitive notion should certainly be satisfied in many elementary 
observation models.  For example, let us state the following simple 
conjecture, which encapsulates the essence of the above intuition in 
the simplest possible setting.

\begin{conj}
\label{conj:obs}
Let $(X_k,Y_k)_{k\in\mathbb{Z}}$ be a stationary infinite-dimensional 
hidden Markov model as in section \ref{sec:ihmm} with 
$X_k\in\{-1,1\}^{\mathbb{Z}}$ and with $Y_k\in\{-1,1\}^{\mathbb{Z}}$ 
of the form
$$
	Y_k^v = X_k^v\xi_k^v,\qquad
	(\xi_k^v)_{k,v\in\mathbb{Z}}\mbox{ are i.i.d.}
	\independent X \mbox{ with } \mathbf{P}[\xi_k^v=-1]=p.
$$
If the underlying process $(X_k)_{k\in\mathbb{Z}}$ is stable, 
then the filter is stable.
\end{conj}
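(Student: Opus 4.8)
<br>

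The plan is to establish filter stability in Conjecture~\ref{conj:obs} by exhibiting conditional decay of correlations via the Dobrushin comparison theorem, exactly as in the high-noise half of Theorem~\ref{thm:phtrans}, but now \emph{without} any smallness assumption on the noise. The crucial structural difference from the phase-transition model is that here the observation $Y_k^v = X_k^v\xi_k^v$ depends on a \emph{single} site $X_k^v$, rather than on a product $X_k^vX_{k-1}^v$ or $X_k^vX_k^{v+1}$ of neighboring spins. Consequently, conditioning on $Y_1,\ldots,Y_k$ does not introduce any \emph{coupling} between distinct sites of the state: each observation is an independent noisy measurement of one coordinate. The first step is therefore to compute, as in Lemma~\ref{lem:spec}, the single-site conditional probabilities of the regular conditional distributions $\mu_{x,y}=\mathbf{P}[X_0,\ldots,X_k\in\,\cdot\,|X_0,Y_1,\ldots,Y_k]$ and $\nu_y=\mathbf{P}[X_0,\ldots,X_k\in\,\cdot\,|Y_1,\ldots,Y_k]$. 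The observation term contributes a field $\beta\,y_\ell^v$ at site $(\ell,v)$, while the temporal coupling now comes only from the \emph{dynamics} $P$, not from the observations.

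The heart of the matter, and the reason the conjecture is genuinely harder than Theorem~\ref{thm:phtrans}, is that stability of $(X_k)_{k\in\mathbb{Z}}$ is a \emph{qualitative} hypothesis, whereas the Dobrushin condition $\sup_j\sum_i C_{ji}<1$ is a \emph{quantitative} one. In the high-noise proof, large $\beta$ was small (i.e.\ $p$ near $1/2$ gives $\beta$ near $0$), which forced the interaction coefficients $C_{ji}\le\tanh(4\beta)$ to be small and closed the contraction estimate directly. Here we are given no bound on $\beta$ at all, and the temporal interaction strength is governed entirely by the transition kernel $P$. So the plan is: first quantify stability of the underlying chain as a Dobrushin-type contraction coefficient for $P$ — that is, assume (or derive from stability) that the one-step dependence of $X_\ell^v$ on $X_{\ell-1}$ and $X_{\ell+1}$ through the dynamics is summably small. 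One would compute the interaction matrix $C_{(\ell',v')(\ell,v)}$ arising from the conditional specification, separating the spatial-neighbor contributions (from the local structure of $P$) from the temporal contributions, and then verify $\|C\|_*<1$ in a weighted norm as in Proposition~\ref{prop:dob}. Because the observation fields $b_i$ are nonzero only at the initial time slice $\ell=0$ (the conditioning on $X_0$ in $\mu$ versus its absence in $\nu$), the comparison bound $|\mu(f)-\nu(f)|\le\sum_{j\in J,i}D_{ji}b_i$ again localizes to $D_{(k,w)(0,v)}$, and the contraction of $D$ across the $k$ time-steps delivers geometric decay in $k$, hence stability via Lemma~\ref{lem:finapprox}.

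The main obstacle I anticipate is precisely bridging qualitative stability to a quantitative Dobrushin contraction. Stability of $(X_k)$ as defined in section~\ref{sec:filt} is $L^1$-forgetting of the initial condition; it does \emph{not} a priori give a single-step coefficient summing to less than one, and indeed a general stable chain need not satisfy Dobrushin's condition for its one-step specification. The honest resolution is likely one of two routes. Either one strengthens the hypothesis to a genuine high-temperature / Dobrushin-uniqueness condition on the space-time specification of $X$ (which is the regime in which the conjecture can actually be proved by these tools, matching the remark that \cite{RvH13,RvH13b} obtain results under strong assumptions), or one argues that the absence of observation coupling means the conditional field $\nu_y$ is a small \emph{perturbation} of the unconditional law of $X$ — a perturbation that is invisible to the \emph{bulk} interactions and enters only through the single-site fields — so that whatever uniqueness property the underlying field enjoys is preserved under conditioning. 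Making the latter perturbation argument rigorous for an arbitrary stable (but possibly low-temperature) dynamics is, I expect, exactly the point at which the general conjecture resists a routine proof; the paper itself flags this verification as an open problem, and I would accordingly aim to prove the statement under an explicit Dobrushin-type strengthening of stability, and highlight the gap to the fully general hypothesis as the essential difficulty.
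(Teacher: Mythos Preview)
This statement is a \emph{conjecture}, not a theorem: the paper does not prove it and explicitly flags it as open (``Despite the seemingly obvious nature of this conjecture, we were not able to prove such a result in a general setting''). So there is no paper proof to compare against, and your proposal is not a proof either --- as you honestly acknowledge in your final paragraph.

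Your diagnosis of the obstacle is accurate: the Dobrushin comparison route requires a quantitative single-step contraction coefficient summing to less than one, and the qualitative stability hypothesis of the conjecture does not supply this. Your proposed fallback --- prove the result under an explicit Dobrushin-type strengthening --- is exactly the regime of \cite{RvH13,RvH13b}, which the paper already cites as complementary ``high-temperature'' machinery. So that route yields nothing new toward the conjecture itself.

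What is worth noting is that the paper's own partial progress (Theorems~\ref{thm:iobs} and~\ref{thm:conttime}) takes a completely different approach from yours: rather than attempting to quantify the stability of $X$, the paper exploits \emph{observability} via entropy arguments (Proposition~\ref{prop:babyobs}, then Conze's multidimensional entropy identity for the translation-invariant case). That approach uses no ergodicity of $X$ whatsoever --- stability of the intermediate filter in Theorem~\ref{thm:iobs} follows from the observation structure alone. The paper's method gets stuck not on a quantitative-vs-qualitative gap but on a $\sigma$-field exchange problem (Remark~\ref{rem:weirdexchg}): passing from the intermediate filter to the actual filter requires commuting a limit in $v$ with a limit in $k$, which is precisely the kind of measure-theoretic identity discussed in section~\ref{sec:meas}. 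Your Dobrushin approach and the paper's observability approach are thus genuinely orthogonal, and neither resolves the conjecture.
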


The idea behind this conjecture is that the direct observation structure 
$Y_k^v=X_k^v\xi_k^v$ is evidently devoid of symmetries for any
$p\ne\frac{1}{2}$: every configuration $x\in\{-1,1\}^{\mathbb{Z}}$ gives 
rise to a distinct observation law $\mathbf{P}[Y_k\in\,\cdot\,|X_k=x]$
(the case $p=\frac{1}{2}$ is trivial as then $Y\independent X$; we 
will therefore assume $p\ne\frac{1}{2}$ in the sequel).
Thus any mechanism 
of the type exhibited by Theorem \ref{thm:phtrans} is ruled out, and it 
seems hard to imagine another mechanism by which ergodicity of the 
underlying process could be obstructed due to conditioning on such 
informative observations.  Despite the seemingly obvious nature of this 
conjecture, we were not able to prove such a result in a general setting.  

The idea that stability of the filter is related to the absence of 
symmetries is not new in the infinite-dimensional setting.  It arises 
already in classical filtering models for a somewhat different reason: it 
may happen that the filter is stable even when the underlying model is 
\emph{not} ergodic.  In such situations, stability properties can emerge 
under the conditional distribution due to the informative nature of the 
observations; in essence, the filter will `forget' its initial 
distribution as the information contained therein is superseded by the 
information in the observations.\footnote{
	This is not the only mechanism that gives rise to stability of 
	nonlinear filters in classical nonergodic models; it is also 
	possible to exploit the local ergodicity of the model, cf.\ 
	\cite{DGLM11} and the references therein.  However, this approach 
	is unrelated to the symmetric breaking mechanism that is developed
	in this section.
} This phenomenon was made precise in the 
papers \cite{vH08, vH09, vH09b}.  While the theory developed in these 
papers is closely related to the symmetry breaking properties that we aim 
to exploit here, these results are not satisfactory in infinite dimension 
as will be explained below.

In this section, we will aim to extend such observability arguments to 
\emph{translation-invariant} systems in infinite dimension by exploiting a 
technique from multidimensional ergodic theory \cite{Con72}.  Somewhat 
surprisingly, the problem proves to be more tractable in the 
continuous-time setting, for which will establish validity of the natural 
analogue of Conjecture \ref{conj:obs}.  In its original discrete time 
formulation, however, our ultimate result falls short of establishing 
Conjecture \ref{conj:obs} even for translation-invariant models.  
Nonetheless, the theory developed here provides one possible mechanism for 
symmetry breaking in conditional ergodic theory. An entirely different 
mechanism will be discussed in the context of conditional random fields in 
section \ref{sec:mono} below.

The remainder of this section in organized as follows.  In section 
\ref{sec:genobs} we will recall some basic ideas from the general 
observability theory developed in \cite{vH08, vH09, vH09b}, and explain 
why these results are not satisfactory in infinite dimension.  We also
outline a slightly stronger entropic formulation that will provide the 
basis for a partial extension to translation-invariant systems in infinite 
dimension, which is developed in section \ref{sec:transinv}.  In section 
\ref{sec:conttime} we consider the continuous-time analogue of Conjecture 
\ref{conj:obs}, and provide a complete proof in this setting for the
translation-invariant case.

\subsection{General observability theory}
\label{sec:genobs}

Let $(X_k,Y_k)_{k\ge 0}$ be a general hidden Markov model as in section 
\ref{sec:filt}, and denote by $\mathbf{P}^\mu$ the law of the model with 
initial distribution $X_0\sim\mu$.  Define the \emph{prediction filter}
$$
	\tilde\pi_k^\mu:=\mathbf{P}^\mu[X_{k}\in\cdot|Y_1,\ldots,Y_{k-1}]
$$
(note that we are conditioning the state at time $k$ only on observations 
prior to time $k$).  The basic observation behind the observability theory 
of \cite{vH08, vH09, vH09b} is as follows.

\begin{thm}[\cite{vH09,vH09b}]
\label{thm:obs}
Suppose that $X_k$ takes values in a compact metric space and that 
$(X_k,Y_k)_{k\ge 0}$ is Feller.  Suppose that the following
\emph{observability} assumption holds:
$$
	\mathbf{P}^\mu[(Y_k)_{k\ge 0}\in\mbox{}\cdot\mbox{}]=
	\mathbf{P}^\nu[(Y_k)_{k\ge 0}\in\mbox{}\cdot\mbox{}]
	\quad\mbox{if and only if}\quad
	\mu=\nu.
$$
Then we have for every bounded and continuous function $f$
$$
	|\tilde\pi_k^\mu(f)-\tilde\pi_k^\nu(f)|
	\xrightarrow{k\to\infty}0\quad\mathbf{P}^\mu\mbox{-a.s.}
$$
whenever $\mathbf{P}^\mu[(Y_k)_{k\ge 0}\in\mbox{}\cdot\mbox{}]\ll
\mathbf{P}^\nu[(Y_k)_{k\ge 0}\in\mbox{}\cdot\mbox{}]$.
\end{thm}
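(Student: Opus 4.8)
The plan is to exploit the interplay between the prediction filter and the law of the \emph{future} observations, using compactness of the space of state distributions, a likelihood‑ratio martingale, and the observability (injectivity) hypothesis. The conceptual backbone is the identity
$$
\mathbf{P}^\mu[(Y_j)_{j\ge k+1}\in\,\cdot\,\mid\mathcal{Y}_{k-1}]=\Phi(\tilde\pi_k^\mu),
\qquad \mathcal{Y}_{k-1}:=\sigma(Y_1,\ldots,Y_{k-1}),
$$
which holds by the Markov property and time‑homogeneity, where $\Phi(\rho):=\mathbf{P}^\rho[(Y_j)_{j\ge 1}\in\,\cdot\,]$ sends the current conditional state law $\rho$ to the law of the ensuing observation sequence; up to the obvious time shift, the observability hypothesis is exactly the injectivity of $\Phi$. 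First I would record that, since $E$ is compact and the model is Feller, $\Phi$ is weakly continuous on the compact space $\mathcal{P}(E)$: for a bounded continuous cylinder function $g$ one has $\langle\Phi(\rho),g\rangle=\int_E G\,d\rho$ with $G\in C(E)$ by the Feller property, and cylinder functions are convergence‑determining. Thus $\Phi$ is a continuous injection of the compact set $\mathcal{P}(E)$, hence a homeomorphism onto its image, and the whole proof reduces to showing that the predicted future observation laws under $\mu$ and $\nu$ merge.

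Next I would introduce the likelihood ratio $\Lambda_k:=d\mathbf{P}^\mu|_{\mathcal{Y}_k}/d\mathbf{P}^\nu|_{\mathcal{Y}_k}$, which is well defined by the assumed absolute continuity on $\mathcal{Y}_\infty$. Being closed by $\Lambda_\infty$, it is a uniformly integrable $\mathbf{P}^\nu$‑martingale, so $\Lambda_k\to\Lambda_\infty$ both a.s.\ and in $L^1(\mathbf{P}^\nu)$. Because $\mathbf{P}^\mu\ll\mathbf{P}^\nu$ one has $0<\Lambda_\infty<\infty$ $\mathbf{P}^\mu$‑a.s., and hence $\Lambda_{k+n}/\Lambda_{k-1}\to 1$ $\mathbf{P}^\mu$‑a.s.\ for each fixed $n$.

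The key step converts this into merging of the one‑sided predictions. For a bounded continuous $g$ depending on finitely many of the observations $Y_{k+1},\ldots,Y_{k+n}$, the Bayes formula gives $\mathbf{E}^\mu[g\mid\mathcal{Y}_{k-1}]=\mathbf{E}^\nu[g\,\Lambda_{k+n}/\Lambda_{k-1}\mid\mathcal{Y}_{k-1}]$, so that
$$
\bigl|\mathbf{E}^\mu[g\mid\mathcal{Y}_{k-1}]-\mathbf{E}^\nu[g\mid\mathcal{Y}_{k-1}]\bigr|
\le \frac{\|g\|_\infty}{\Lambda_{k-1}}\,
\mathbf{E}^\nu\Bigl[\,\sup_{j\ge k-1}|\Lambda_j-\Lambda_{k-1}|\ \Big|\ \mathcal{Y}_{k-1}\Bigr].
$$
The quantity inside the conditional expectation tends to $0$ a.s.\ and is dominated by $2\sup_j\Lambda_j\in L^1(\mathbf{P}^\nu)$ by Doob's inequality, so Hunt's lemma shows the right‑hand side vanishes $\mathbf{P}^\mu$‑a.s. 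Recognizing $\mathbf{E}^\mu[g\mid\mathcal{Y}_{k-1}]=\langle\Phi(\tilde\pi_k^\mu),g\rangle$ and likewise for $\nu$, and intersecting the resulting null sets over a countable convergence‑determining family of $g$, I obtain $\Phi(\tilde\pi_k^\mu)-\Phi(\tilde\pi_k^\nu)\to 0$ weakly, $\mathbf{P}^\mu$‑a.s. I expect this passage from the $L^1$‑convergence of the martingale to almost sure convergence of the conditional predictions — the Hunt's lemma step together with the control of $\Lambda_{k+n}/\Lambda_{k-1}$ on $\{\Lambda_\infty>0\}$ — to be the main obstacle.

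Finally I would close the argument by compactness. Fixing an observation path in the full‑measure set, any subsequence of $(\tilde\pi_k^\mu,\tilde\pi_k^\nu)$ admits, by compactness of $\mathcal{P}(E)$, a further subsequence converging weakly to some $(\rho_1,\rho_2)$. Continuity of $\Phi$ and the previous step give $\Phi(\rho_1)=\Phi(\rho_2)$, whence $\rho_1=\rho_2$ by observability. Since every subsequential limit lies on the diagonal, $\tilde\pi_k^\mu-\tilde\pi_k^\nu\to 0$ weakly, that is, $|\tilde\pi_k^\mu(f)-\tilde\pi_k^\nu(f)|\to 0$ for every bounded continuous $f$, as claimed. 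The only genuinely structural inputs are observability (injectivity) and compactness, which together upgrade the merging of the predicted observations into the merging of the filters; everything else is soft martingale theory.
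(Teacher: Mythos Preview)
Your overall strategy is exactly the one the paper alludes to: it does not give a proof of this theorem but cites \cite{vH09,vH09b} and notes that the argument ``relies on a classical martingale argument due to Blackwell and Dubins.'' Your scheme---merge the conditional laws of the future observations via the likelihood-ratio martingale, then use continuity and injectivity of $\Phi$ together with compactness of $\mathcal{P}(E)$ to pull this back to merging of the prediction filters---is precisely that approach.

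There is, however, a genuine technical error in the martingale step. You assert that $\sup_j\Lambda_j\in L^1(\mathbf{P}^\nu)$ ``by Doob's inequality,'' and then invoke Hunt's lemma with this as the dominating function. Doob's maximal inequality does \emph{not} give $\sup_j\Lambda_j\in L^1$ for an $L^1$-bounded (even uniformly integrable) nonnegative martingale; that requires $\Lambda_\infty\in L\log L$ or $L^p$ for some $p>1$, which is nowhere assumed. So as written the Hunt step is unjustified.

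The fix is easy and in fact simplifies your argument. There is no need to pass through $\Lambda_{k+n}$: since $g$ is $\mathcal{Y}_\infty$-measurable, the Bayes formula gives directly
\[
\mathbf{E}^\mu[g\mid\mathcal{Y}_{k-1}]=\frac{1}{\Lambda_{k-1}}\,\mathbf{E}^\nu[g\,\Lambda_\infty\mid\mathcal{Y}_{k-1}],
\]
so that, uniformly in the cylinder depth,
\[
\bigl|\mathbf{E}^\mu[g\mid\mathcal{Y}_{k-1}]-\mathbf{E}^\nu[g\mid\mathcal{Y}_{k-1}]\bigr|
\le \frac{\|g\|_\infty}{\Lambda_{k-1}}\,\mathbf{E}^\nu\bigl[|\Lambda_\infty-\Lambda_{k-1}|\;\big|\;\mathcal{Y}_{k-1}\bigr].
\]
It remains to show $\mathbf{E}^\nu[\,|\Lambda_\infty-\Lambda_{k-1}|\mid\mathcal{Y}_{k-1}]\to 0$ $\mathbf{P}^\nu$-a.s.\ (hence $\mathbf{P}^\mu$-a.s.). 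For this, truncate: for fixed $M$, the bounded variable $\Lambda_\infty\wedge M$ satisfies $\mathbf{E}^\nu[\,|\Lambda_\infty\wedge M-\mathbf{E}^\nu[\Lambda_\infty\wedge M\mid\mathcal{Y}_{k-1}]|\mid\mathcal{Y}_{k-1}]\to 0$ a.s.\ by Hunt's lemma (now legitimately, with a bounded dominator), while the truncation error is controlled by $\mathbf{E}^\nu[(\Lambda_\infty-M)^+\mid\mathcal{Y}_{k-1}]\to(\Lambda_\infty-M)^+$ a.s.\ and $|\mathbf{E}^\nu[\Lambda_\infty\wedge M\mid\mathcal{Y}_{k-1}]-\Lambda_{k-1}|\to(\Lambda_\infty-M)^+$ a.s.; letting $M\to\infty$ finishes it. This is exactly the original Blackwell--Dubins argument \cite{BD62}. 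With this correction your proof goes through, and the compactness/observability conclusion is fine as you wrote it.
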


Note that no ergodicity or nondegeneracy assumptions are imposed in this 
result: the only assumption is that of observability, that is, that 
distinct initial laws give rise to distinct observation laws.  The 
latter can evidently be viewed as a very mild type of symmetry breaking 
assumption.  A more general form of the result that does not require 
compactness or Feller assumptions can be found in \cite{vH09b}.

There is nothing in Theorem \ref{thm:obs} that prohibits its application 
in infinite-dimensional systems.  In fact, it is readily verified that the 
observability assumption of Theorem \ref{thm:obs} is automatically 
satisfied in the setting of Conjecture \ref{conj:obs} (provided $p\ne 
\frac{1}{2}$).  Nonetheless, unlike in most finite-dimensional problems, 
the conclusion of Theorem \ref{thm:obs} is not satisfactory in the 
infinite-dimensional setting, as we will presently explain.

There are two ways in which the conclusion of Theorem \ref{thm:obs} falls 
short of the filter stability property as it was introduced in section 
\ref{sec:filt}.  On the one hand, the conclusion applies to the prediction 
filter $\tilde\pi_k$ rather than to the filter $\pi_k$.  This is, however, 
not a major issue: one could argue that the prediction filter is of equal 
interest in practice as the filter itself, and thus we would be quite 
content in first instance to resolve a variant of Conjecture 
\ref{conj:obs} for the prediction filter.\footnote{
	In finite dimension, stability of the 
	filter can often be deduced from that of the prediction filter 
	and vice versa (e.g., \cite{vH08,TvH13}), so that the 
	conditional ergodic theory is not expected to be substantially 
	different in these two formulations.  The situation in infinite
	dimension is not entirely clear, however, cf.\ section
	\ref{sec:transinv} below.
}
Much more serious, however, is the fact that stability is only obtained 
for initial measures $\mu,\nu$ that give rise to \emph{absolutely 
continuous} observation laws.  If the state space of $X_k$ is countable, 
this is not a restriction: then
$\delta_x\ll\lambda$ whenever $\lambda(\{x\})>0$, so choosing 
$\mu=\delta_{X_0}$ and $\nu=\lambda$ in Theorem \ref{thm:obs} yields
$$
	|\mathbf{E}[f(X_k)|X_0,Y_1,\ldots,Y_{k-1}]-
	\mathbf{E}[f(X_k)|Y_1,\ldots,Y_{k-1}]|
	\xrightarrow{k\to\infty}0
	\quad\mbox{in }L^1,
$$
the natural counterpart of the stability property of section 
\ref{sec:filt} for the prediction filter.  In a continuous state space, 
this argument does not work, as a point mass is singular with respect to 
any nonatomic measure.  Nonetheless, this proves to be a minor problem in 
most finite-dimensional models: when the hidden Markov model possesses 
transition and observation densities, one can generally deduce absolute 
continuity of the observation laws $\mathbf{P}^\mu[(Y_k)_{k\ge 
0}\in\mbox{}\cdot\mbox{}]\ll\mathbf{P}^\nu[(Y_k)_{k\ge 0}\in\mbox{}\cdot\mbox{}]$
even when $\mu$ and $\nu$ are mutually singular.  This is discussed in 
detail in \cite[section 11.5]{CR11}.  In infinite dimension, on the other 
hand, the existence of transition and observation densities is out of the 
question, and thus the conclusion of Theorem \ref{thm:obs} is severely 
limited.  In particular, there is no hope to deduce the very natural 
filter stability property as introduced in section \ref{sec:filt}, or its 
prediction filter counterpart, by using the result or the method of proof 
of Theorem \ref{thm:obs}.

\begin{rem} While the result of Theorem \ref{thm:obs} falls short of the 
natural stability property as introduced in section \ref{sec:filt}, it 
does ensure a minimal form of stability for absolutely continuous initial 
conditions in the setting of Conjecture \ref{conj:obs}. Unfortunately, in 
infinite dimension, absolutely continuous measures are of little practical 
interest.  Indeed, if $\mu\ll\nu$ are absolutely continuous probability 
measures on an infinite product space $E^{\mathbb{Z}}$, then the density 
$\frac{d\mu}{d\nu}$ can be approximated arbitrarily well in $L^1(\nu)$ by 
densities that depend on a finite number of coordinates.  This implies 
that absolutely continuous probability measures in infinite dimension 
differ effectively only on a finite number of coordinates (indeed, if 
$\frac{d\mu}{d\nu}$ is a function of $\{x^v\}_{v\in I}$ for a finite set 
$I\subset\mathbb{Z}$, then $\mu(\{x^v\}_{v\not\in 
I}\in\cdot\,|\{x^v\}_{v\in I})= \nu(\{x^v\}_{v\not\in 
I}\in\cdot\,|\{x^v\}_{v\in I})$ by the Bayes formula, and thus the 
difference between $\mu$ and $\nu$ is entirely determined by the 
marginal on $I$). Thus the stability property for absolutely continuous 
initial measures does not capture the dissipation of an initial error in 
infinitely many coordinates, and is therefore of little relevance to the 
analysis of any reasonable approximation method that might arise in 
practice. \end{rem}

The proof of Theorem \ref{thm:obs} in \cite{vH09,vH09b} relies on a 
classical martingale argument due to Blackwell and Dubins \cite{BD62}.  
Instead of explaining this approach, let us present an alternative proof 
of stability, using the notion of entropy, in a special case: the 
finite-dimensional counterpart of Conjecture \ref{conj:obs}.  The key 
mechanism behind the proof is equivalent to that of 
\cite{vH09,vH09b}.
Nonetheless, the entropic formulation 
will be crucial to developing a nontrivial infinite-dimensional extension 
in section \ref{sec:transinv} below.

\begin{prop}
\label{prop:babyobs}
Let $(X_k,Y_k)_{k\in\mathbb{Z}}$ be a hidden Markov model as in 
section \ref{sec:filt} with $X_k\in\{-1,1\}^r$ (where $r<\infty$)
and with observations $Y_k\in\{-1,1\}^r$ of the form
$$
	Y_k^v = X_k^v\xi_k^v,\qquad
	(\xi_k^v)_{k\in\mathbb{Z},v\in\{1,\ldots,r\}}\mbox{ are i.i.d.}
	\independent X \mbox{ with } \mathbf{P}[\xi_k^v=-1]=p.
$$
Then the prediction filter is stable in the sense that
$$
	|\mathbf{P}[X_k\in A|X_0,Y_1,\ldots,Y_{k-1}]-
	\mathbf{P}[X_k\in A|Y_1,\ldots,Y_{k-1}]|
	\xrightarrow{k\to\infty}0\quad\mbox{in }L^1
$$
for every set $A$, provided that $p\ne\frac{1}{2}$.
\end{prop}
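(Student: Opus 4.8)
The plan is to establish a contraction at the level of the \emph{predictive law of the observation} $Y_k$, and then to undo the observation kernel to transfer this to the state. Write $\tilde\pi_k := \mathbf{P}[X_k\in\cdot\,|Y_1,\ldots,Y_{k-1}]$ and $\tilde\pi_k^{X_0} := \mathbf{P}[X_k\in\cdot\,|X_0,Y_1,\ldots,Y_{k-1}]$ for the two prediction filters to be compared, and let $G$ denote the linear `smearing' map sending a law $\rho$ on $\{-1,1\}^r$ to the predictive observation law $G\rho := \int g(x,\cdot)\,\rho(dx)$, where $g(x,y)=\prod_{v=1}^r\{(1-p)\mathbf{1}_{y^v=x^v}+p\,\mathbf{1}_{y^v\ne x^v}\}$ is the observation density. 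By the hidden Markov structure, $Y_k$ is conditionally independent of the past given $X_k$, so $\mathbf{P}[Y_k\in\cdot\,|X_0,Y_1,\ldots,Y_{k-1}]=G\tilde\pi_k^{X_0}$ and $\mathbf{P}[Y_k\in\cdot\,|Y_1,\ldots,Y_{k-1}]=G\tilde\pi_k$. Since the desired $L^1$ convergence for every set $A$ follows from $\mathbf{E}\|\tilde\pi_k^{X_0}-\tilde\pi_k\|_{\mathrm{TV}}\to0$, it suffices to prove the latter. Note that no ergodicity of the underlying chain is needed; as in Theorem \ref{thm:obs}, stability here is driven purely by informativeness of the observations.

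The entropic input is that the finitely-valued initial condition can only be `used up' finitely often. Since $H(X_0)\le r\log 2<\infty$, the chain rule for mutual information and its monotonicity give $\sum_{k\ge1}I(X_0;Y_k\,|\,Y_1,\ldots,Y_{k-1})=I(X_0;(Y_k)_{k\ge1})\le H(X_0)<\infty$, so the summands must vanish: $I(X_0;Y_k\,|\,Y_1,\ldots,Y_{k-1})\to0$. I would then invoke the standard identity expressing conditional mutual information as an expected relative entropy between the conditional and marginal laws of $Y_k$, namely $I(X_0;Y_k\,|\,Y_1,\ldots,Y_{k-1})=\mathbf{E}[\,D(G\tilde\pi_k^{X_0}\,\|\,G\tilde\pi_k)\,]$ (finite since $g>0$ for $0<p<1$), and apply Pinsker's inequality to obtain $\mathbf{E}\|G\tilde\pi_k^{X_0}-G\tilde\pi_k\|_{\mathrm{TV}}^2\to0$.

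The observability step is to invert $G$. Here $G=M^{\otimes r}$ is the $r$-fold tensor power of the single-site noise matrix $M=\bigl(\begin{smallmatrix}1-p&p\\p&1-p\end{smallmatrix}\bigr)$, whose eigenvalues are $1$ and $1-2p$; thus $G$ is invertible precisely when $p\ne\frac12$, which is exactly the hypothesis. As $\tilde\pi_k^{X_0}-\tilde\pi_k$ is a signed measure on a finite set, invertibility yields a finite constant $C=\|G^{-1}\|$ with $\|\rho\|_{\mathrm{TV}}\le C\|G\rho\|_{\mathrm{TV}}$ for every such $\rho$, and therefore $\mathbf{E}\|\tilde\pi_k^{X_0}-\tilde\pi_k\|_{\mathrm{TV}}^2\le C^2\,\mathbf{E}\|G\tilde\pi_k^{X_0}-G\tilde\pi_k\|_{\mathrm{TV}}^2\to0$. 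Since $L^2$ convergence implies $L^1$ convergence, this would complete the proof.

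I do not expect the main difficulty to lie in any single step, each of which is routine, but rather in the \emph{quantitative} dependence of the inversion constant on the dimension. The relevant contraction is governed by the smallest nontrivial singular value of $G$, which decays like $(1-2p)^r$, so that $C=\|G^{-1}\|$ blows up like $(1-2p)^{-r}$ as $r\to\infty$. The argument is therefore genuinely finite-dimensional: it degenerates in the infinite-dimensional limit, where $G$ can no longer be boundedly inverted. This is precisely the obstruction that motivates the more delicate entropic treatment of translation-invariant systems in section \ref{sec:transinv}, and overcoming it is exactly what this elementary proposition does \emph{not} address.
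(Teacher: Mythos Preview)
Your proof is correct and follows essentially the same route as the paper's: the entropic bound $\sum_k I(X_0;Y_k\,|\,Y_{<k})\le H(X_0)<\infty$ is exactly the paper's chain-rule identity $H(Y_{\le n})-H(Y_{\le n}\,|\,X_0)\le H(X_0)$, and your inversion of $G=M^{\otimes r}$ on measures is the dual of the paper's inversion of the operators $T_1\cdots T_r$ on functions. The only cosmetic difference is that you pass through $L^2$ before $L^1$ and make the $r$-dependence of $\|G^{-1}\|$ explicit, which the paper leaves implicit.
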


\begin{proof}
We use standard ideas from information theory \cite{CT06}.
Let $H(X)$ denote entropy and $H(X|Y)$
denote conditional entropy of discrete random variables $X,Y$.  Then
\begin{align*}
	&\sum_{k=1}^n\{H(Y_k|Y_1,\ldots,Y_{k-1})-
	H(Y_k|X_0,Y_1,\ldots,Y_{k-1})\} \\ &=
	H(Y_1,\ldots,Y_n)-H(Y_1,\ldots,Y_n|X_0)
	\phantom{\sum_{k=1}^n} \\
	&= H(X_0)-H(X_0|Y_1,\ldots,Y_n) \le H(X_0),
\end{align*}
where we have used the chain rule for entropy and
symmetry of mutual information $H(X)-H(X|Y)=H(Y)-H(Y|X)=:I(X;Y)$.
As $H(X_0)\le r$ is independent of $n$,
$$
	H(Y_k|Y_1,\ldots,Y_{k-1})-
        H(Y_k|X_0,Y_1,\ldots,Y_{k-1})
	\xrightarrow{k\to\infty}0.
$$
But note that 
\begin{multline*}
	H(Y_k|Y_1,\ldots,Y_{k-1})-
        H(Y_k|X_0,Y_1,\ldots,Y_{k-1}) = \mbox{}\\ 
	\mathbf{E}\Bigg[\sum_{y\in\{-1,1\}^r} 
	\mathbf{P}[Y_k=y|X_0,Y_1,\ldots,Y_{k-1}]
	\log_2\frac{\mathbf{P}[Y_k=y|X_0,Y_1,\ldots,Y_{k-1}]}{
	\mathbf{P}[Y_k=y|Y_1,\ldots,Y_{k-1}]}\Bigg],
\end{multline*}
which is precisely the expected relative entropy between the 
conditional 
distributions
$\mathbf{P}[Y_k\in\,\cdot\,|X_0,Y_1,\ldots,Y_{k-1}]$ and
$\mathbf{P}[Y_k\in\,\cdot\,|Y_1,\ldots,Y_{k-1}]$. Thus by Pinsker's 
inequality
$$
	|\mathbf{E}[g(Y_k)|X_0,Y_1,\ldots,Y_{k-1}]-
	\mathbf{E}[g(Y_k)|Y_1,\ldots,Y_{k-1}]|
	\xrightarrow{k\to\infty}0\quad\mbox{in }L^1
$$
for every function $g$.  Now note that, by the conditional independence 
structure of the observations, we have 
$\mathbf{E}[g(Y_k)|X_0,Y_1,\ldots,Y_{k-1},X_k]=
\mathbf{E}[g(Y_k)|Y_1,\ldots,Y_{k-1},X_k]=
\mathbf{E}[g(Y_k)|X_k]$.  Thus we can write using the tower property
\begin{align*}
	\mathbf{E}[g(Y_k)|X_0,Y_1,\ldots,Y_{k-1}]&=
	\mathbf{E}[f(X_k)|X_0,Y_1,\ldots,Y_{k-1}],\\
	\mathbf{E}[g(Y_k)|Y_1,\ldots,Y_{k-1}]&=
	\mathbf{E}[f(X_k)|Y_1,\ldots,Y_{k-1}],
\end{align*}
where 
$$
	f(x)=\mathbf{E}[g(Y_k)|X_k=x] = 
	(T_1\cdots T_rg)(x),\qquad
	(T_ig)(x):=(1-p)g(x)+p g(x^{-i})
$$
and $x^{-i}:=(x^1,\ldots,x^{i-1},-x^i,x^{i+1},\ldots,x^r)$.  But it 
is readily seen that if $p\ne\frac{1}{2}$, then each operator $T_i$ is 
invertible. Thus \emph{every} function $f$ is of the form 
$f(x)=\mathbf{E}[g(Y_k)|X_k=x]$ 
for some function $g$, and the proof is evidently complete.
\end{proof}

Let us emphasize the two key ideas of the proof.  We first establish that 
the conditional distribution of the \emph{next observation} $Y_k$ given 
the observation history $Y_1,\ldots,Y_{k-1}$ `forgets' the initial 
condition $X_0$ as $k\to\infty$.  That is, unlike the 
prediction filter which aims to predict the \emph{next hidden state} 
$X_k$, stability of predicting the next observation holds regardless of 
any properties of the model: it is a simple consequence of the finite 
entropy property $H(X_0)<\infty$ of the initial condition (this fails in 
the continuous case, and for this reason the absolute continuity 
assumption $\mathbf{P}^\mu[(Y_k)_{k\ge 0}\in\mbox{}\cdot\mbox{}]\ll 
\mathbf{P}^\nu[(Y_k)_{k\ge 0}\in\mbox{}\cdot\mbox{}]$ is essential in the 
general setting of Theorem \ref{thm:obs}).  However, due to the absence of 
symmetry in the observations, predicting the next observation $Y_k$ is 
\emph{equivalent} to predicting the hidden state $X_k$.  It is here that 
observability enters the picture.

\subsection{Translation-invariant systems}
\label{sec:transinv}

The proof of Proposition \ref{prop:babyobs} relies crucially on the fact 
that $X_k$ and $Y_k$ take values in a finite state space $\{-1,1\}^r$ with 
$r<\infty$, so that the entropies $H(Y_1,\ldots,Y_n)$ and $H(X_0)$ are 
finite.  In the infinite-dimensional setting of Conjecture \ref{conj:obs} 
where $X_k$ and $Y_k$ take values in $\{-1,1\}^{\mathbb{Z}}$, this is no 
longer the case.  Nonetheless, entropy arguments are ubiquitous in the 
study of infinite-dimensional systems that are 
\emph{translation-invariant}: in such systems, the role of entropy is 
replaced by that of the entropy rate (or `specific entropy'), which 
plays an central role in the ergodic theory of measurable dynamical 
systems, cf.\ \cite[Part 2]{Gla03}, and in statistical mechanics where it 
gives rise to thermodynamic formalism, cf.\ \cite[Chapters 15--16]{Geo11}.  
The aim of this section is to develop an infinite-dimensional counterpart 
to Proposition \ref{prop:babyobs} using this formalism.

Let $(X_k,Y_k)_{k\in\mathbb{Z}}$ be a 
stationary infinite-dimensional hidden Markov model as in section 
\ref{sec:ihmm} with $X_k\in\{-1,1\}^{\mathbb{Z}}$ and with 
$Y_k\in\{-1,1\}^{\mathbb{Z}}$ 
of the form
$$
	Y_k^v = X_k^v\xi_k^v,\qquad
	(\xi_k^v)_{k,v\in\mathbb{Z}}\mbox{ are i.i.d.}\independent X
	\mbox{ with } \mathbf{P}[\xi_k^v=-1]=p.
$$
This model is the same as in Conjecture \ref{conj:obs}.  In addition, 
however, we will assume throughout this section that the model is 
translation-invariant: that is, that the local transition kernels
$P^v(x,dz)=P^{w}(x,dz)$ for all $v,w\in\mathbb{Z}$ (cf.\ section 
\ref{sec:ihmm}), and that 
$$
	(X_k^v,Y_k^v)_{k,v\in\mathbb{Z}}
	\,\mathop{\stackrel{\mathrm{law}}{=}}\,
	(X_{k+\ell}^{v+w},Y_{k+\ell}^{v+w})_{k,v\in\mathbb{Z}}
	\quad\mbox{for all }\ell,w\in\mathbb{Z}.
$$
We now state the main result of this section.

\begin{thm}
\label{thm:iobs}
For the model of this section, if $p\ne\frac{1}{2}$,
we have
$$
	|\mathbf{P}[X_k\in A|X_0,Y_1,\ldots,Y_{k-1},Y_k^{<v}]-
	\mathbf{P}[X_k\in A|Y_1,\ldots,Y_{k-1},Y_k^{<v}]|
	\xrightarrow{k\to\infty}0\quad\mbox{in }L^1
$$
for every measurable set $A$ and $v\in\mathbb{Z}$,
where $Y_k^{<v}:=\{Y_k^w:w<v\}$.
\end{thm}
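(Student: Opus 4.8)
The plan is to imitate the finite-dimensional argument of Proposition \ref{prop:babyobs}: first show that the conditional law of the \emph{next observation} forgets the initial condition, and then convert this into a statement about the hidden state using invertibility of the observation channel (which is where $p\ne\tfrac12$ enters). The only obstruction to copying that proof verbatim is that it rests on the bound $H(X_0)<\infty$, whereas here $H(X_0)=\infty$. The remedy is to replace entropy by \emph{entropy rate}: the role played by $H(X_0)$ will be played by the fact that $X_0$ is supported on a single time-slice of the space-time lattice $\mathbb{Z}^2$ and therefore carries \emph{zero specific information} once information is measured per unit of space-time volume. Concretely, I would fix the time-first lexicographic order on $\mathbb{Z}\times\mathbb{Z}$, for which the conditioning set in the theorem is exactly the past of the site $(k,v)$, and introduce the per-site quantity
$$
	J_k := I(Y_k^0;X_0 \mid Y_1,\dots,Y_{k-1},Y_k^{<0}),
$$
the conditional mutual information between the next observation and $X_0$ given everything preceding it. By spatial stationarity $J_k$ does not depend on the chosen site, so the whole problem reduces to proving $J_k\to 0$; note that no ergodicity of $(X_k)$ is needed, in keeping with the observability philosophy of this section.

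The heart of the matter is the bound $J_k\to0$, and this is where the codimension-one nature of $X_0$ is exploited. Writing $\Lambda=[1,n]\times[-m,m]$ (time $\times$ space), the chain rule telescopes $I(Y_\Lambda;X_0)$ into a sum over $\Lambda$ of conditional informations of each $Y_k^v$ given its box-past. On the other hand, locality of the dynamics (range one) confines the dependence of $Y_\Lambda$ on $X_0$ to the window $X_0^{[-m-n,\,m+n]}$, so that
$$
	I(Y_\Lambda;X_0)\le H\bigl(X_0^{[-m-n,m+n]}\bigr)\le (2m+2n+1)\log 2 .
$$
Dividing by the spatial width $2m+1$ and letting $m\to\infty$ with $n$ fixed kills the $n$-dependent excess, so the specific information $S_n:=\lim_m (2m+1)^{-1}I(Y_{[1,n]}^{[-m,m]};X_0)$ is bounded by $\log 2$ \emph{uniformly in $n$}. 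Since the same telescoping expresses $S_n=\sum_{k=1}^n\delta_k$ with nonnegative per-time-slice specific informations $\delta_k$, uniform boundedness forces $\delta_k\to0$.

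It then remains to identify $\delta_k$ with $J_k$, and this is the step I expect to be the main obstacle. It is a Kolmogorov--Sinai/F\o{}lner-type statement (cf.\ \cite{Geo11}): expanding $\delta_k$ by the spatial chain rule produces a Ces\`aro average over $v\in[-m,m]$ of the informations $I(Y_k^v;X_0\mid Y_{[1,k-1]}^{[-m,m]},Y_k^{[-m,v)})$, and one must show this average converges to the full-past quantity $J_k$. The interior terms converge to $J_k$ as the windowed conditioning increases to $(Y_1,\dots,Y_{k-1},Y_k^{<v})$, by martingale convergence of the conditional entropies of the binary variable $Y_k^v$; the terms near the left edge converge instead to analogues $J_k^{(t)}$ with only $t$ conditioned left-neighbours, and $J_k^{(t)}\to J_k$ as $t\to\infty$, so the Ces\`aro average again converges to $J_k$. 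Justifying the interchange of the volume limit with the conditioning, and controlling the edge contributions uniformly in $m$, is the delicate part; everything else is bookkeeping.

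Finally, to pass from a single observation to the full configuration I would run the observation-forgetting bound on finite blocks, where the spatial chain rule gives $I(Y_k^{[v,v+r)};X_0\mid Y_1,\dots,Y_{k-1},Y_k^{<v})=rJ_k\to0$, and then apply Pinsker's inequality to obtain $L^1$ convergence of the conditional law of $Y_k^{[v,v+r)}$ with and without $X_0$. Invertibility of the single-site channel $T=(1-p)\,\mathrm{id}+p\,\mathrm{flip}$ for $p\ne\tfrac12$ makes $T^{\otimes r}$ invertible, which converts this into convergence of the conditional law of $X_k$ on finite blocks; the coordinates already present in $Y_k^{<v}$ are absorbed as a Bayesian update with a bounded (block-dependent) likelihood ratio, which preserves the $L^1$ forgetting. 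The extension from cylinder events to an arbitrary measurable $A$ is then exactly the martingale approximation carried out in Lemma \ref{lem:finapprox}.
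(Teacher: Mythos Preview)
Your overall architecture is the paper's: establish that the one-step observation law forgets $X_0$ (the paper's Proposition~\ref{prop:pinsker}), then push this through Pinsker, the invertibility of the binary channel, a Bayes step for sites already in $Y_k^{<v}$, and Lemma~\ref{lem:finapprox}. These final steps match the paper almost verbatim.

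The genuine difference is in how you prove the entropy convergence $J_k\to 0$. The paper does not run a finite-volume light-cone argument; instead it shifts by stationarity to negative times and invokes Conze's multidimensional entropy identity \cite[eq.\ (20)]{Con72} (see Remark~\ref{rem:conze}), which for the translation-invariant field $Z^{(k,v)}=(X_k^v,Y_k^v)$ gives directly
\[
\lim_{n\to\infty} H\bigl(Y_0^v\,\big|\,Y_0^{<v},Y_{-1},Y_{-2},\ldots;\,X_{-n}^{<v},X_{-n-1},\ldots\bigr)=H\bigl(Y_0^v\,\big|\,Y_0^{<v},Y_{-1},Y_{-2},\ldots\bigr),
\]
after which a sandwich and the time-Markov property finish the job in a few lines. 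Your route---bounding $I(Y_\Lambda;X_0)$ by $H(X_0^{[-m-n,m+n]})$ via the range-one light cone, normalizing by $2m+1$, and then identifying the per-slice limit $\delta_k$ with $J_k$ through a Ces\`aro/F\o lner argument---is essentially a hands-on proof of the relevant case of Conze's identity. It is more self-contained, and your diagnosis that the identification $\delta_k=J_k$ is the delicate step is accurate (it works: the summands $b(m+v,m-v)$ are uniformly bounded by $\log 2$ and converge to $J_k$ whenever both distances to the boundary diverge, so the edge contribution is $O(1/m)$). One thing to note is that your argument \emph{uses} the spatial locality of the dynamics to get the $(2m+2n+1)\log 2$ bound, whereas the paper's appeal to Conze needs only translation invariance of the joint field and the Markov property in time; since locality is part of the standing assumptions of section~\ref{sec:ihmm} this is not a gap, but the paper's proof is in this sense slightly more robust.
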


Theorem \ref{thm:iobs} establishes stability of a quantity
$$
	\mathbf{P}[X_k\in A|Y_1,\ldots,Y_{k-1},Y_k^{<v}]
$$
that is intermediate between the prediction filter (section 
\ref{sec:genobs})
$$
	\mathbf{P}[X_k\in A|Y_1,\ldots,Y_{k-1}]	
$$
and the filter (section \ref{sec:filt})
$$
	\mathbf{P}[X_k\in A|Y_1,\ldots,Y_{k}]. 
$$
It will become evident in the proof that the intermediate filter is the 
natural infinite-dimensional object that arises in extending the 
observability theory of section \ref{sec:genobs} to translation-invariant 
systems. On the other hand, from the practical point of view, the 
intermediate filter is a somewhat strange object.  Unlike the prediction 
filter, which arises when predicting the next state given the observations 
to date, and the filter, which arises when tracking the current state 
given the observation history, the intermediate filter uses an unnatural 
subset of the observations in the current time step. In particular, 
Theorem \ref{thm:iobs} falls short of establishing either Conjecture 
\ref{conj:obs} or its natural prediction filter counterpart. Nonetheless, 
Theorem \ref{thm:iobs} could provide a step towards establishing 
Conjecture \ref{conj:obs} for translation-invariant models (cf.\ Remark 
\ref{rem:weirdexchg} below).  Moreover, in section \ref{sec:conttime} 
below, we will see that this shortcoming of Theorem \ref{thm:iobs} can be 
resolved for the natural continuous-time analogue of Conjecture 
\ref{conj:obs}.

\begin{rem}
Note that Theorem \ref{thm:iobs} does not impose any ergodicity assumption 
on the underlying hidden Markov model: only observability was used to 
establish the result.  In this sense, this result goes beyond the spirit 
of Conjecture \ref{conj:obs}, which states that ergodicity of the 
underlying model is inherited by the filter in models with informative 
observations.  
Indeed, the result of Theorem \ref{thm:iobs} cannot be interpreted as 
establishing the inheritance of ergodicity, as ergodicity plays no role in 
the argument; rather, the intermediate filter is rendered stable here 
entirely due to the the absence of observation symmetries, even when the 
underlying model is not ergodic.

One might therefore expect that ergodicity should play no role in 
Conjecture \ref{conj:obs} either: after all, the mechanism by which we are 
exploiting the absence of observation symmetries appears to be independent 
of the ergodic properties of the model.  However, it is possible that 
ergodicity must nonetheless enter the picture in order to extend the 
conclusion of Theorem \ref{thm:iobs} from the intermediate filter to the 
filter, so that neither ergodicity nor observability suffices by itself to 
ensure stability of the filter in infinite dimensional models. Some 
evidence for this possibility will be discussed in section 
\ref{sec:graphs}.  On the other hand, the continuous-time results of 
section \ref{sec:conttime} below could be viewed as evidence to the 
contrary.  New ideas appear to be needed to resolve these questions.
\end{rem}

We now turn to the proof of Theorem \ref{thm:iobs}.  To this end we begin 
by proving the following result, which replaces the key step in the proof 
of Proposition \ref{prop:babyobs}.

\begin{prop}
\label{prop:pinsker}
For the model of this section, we have
$$
	H(Y_k^v|Y_1,\ldots,Y_{k-1},Y_k^{<v})-
	H(Y_k^v|X_0,Y_1,\ldots,Y_{k-1},Y_k^{<v})
	\xrightarrow{k\to\infty}0
$$
for every $v\in\mathbb{Z}$.
\end{prop}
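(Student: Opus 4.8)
The plan is to recast the statement as the vanishing of a conditional mutual information and then to bound its running sum by a finite \emph{specific} (per-site) entropy, thereby replacing the role played by the finite entropy $H(X_0)$ in the proof of Proposition \ref{prop:babyobs}. Writing $\mathcal{F}_{k,v}:=\sigma(Y_1,\ldots,Y_{k-1},Y_k^{<v})$, the quantity to be controlled is
\[
a_k := H(Y_k^v|Y_1,\ldots,Y_{k-1},Y_k^{<v})-H(Y_k^v|X_0,Y_1,\ldots,Y_{k-1},Y_k^{<v}) = I(Y_k^v;X_0\mid\mathcal{F}_{k,v}) \ge 0 .
\]
By spatial translation invariance $a_k$ does not depend on $v$, so I may fix $v=0$. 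The goal is to show $\sum_{k\ge1}a_k<\infty$, which forces $a_k\to0$. The obstruction to copying the finite-dimensional argument is that here $H(X_0)=\infty$; the remedy is that the relevant budget is not $H(X_0)$ but the per-site entropy $H(X_0^0\mid X_0^{<0})\le\log 2$, which is finite because $X_0^0$ is a single binary variable.

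The argument rests on two identities. First, I would expand $a_k=I(Y_k^0;X_0\mid\mathcal{F}_{k,0})$ by the chain rule for mutual information over the spatial sites of the initial field, writing $X_0=(X_0^u)_{u\in\mathbb{Z}}$ and $X_0^{<u}:=\{X_0^w:w<u\}$:
\[
a_k = \sum_{u\in\mathbb{Z}} I\big(Y_k^0;X_0^u \mid \mathcal{F}_{k,0},X_0^{<u}\big).
\]
Applying the spatial shift by $-u$ and using translation invariance identifies each summand with $\Delta_{k,-u}$, where $\Delta_{k,w}:=I(X_0^0;Y_k^w\mid X_0^{<0},\mathcal{F}_{k,w})$; hence $a_k=\sum_{w\in\mathbb{Z}}\Delta_{k,w}$. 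Second, I note that along the lexicographic scan of the space--time slab $[1,n]\times\mathbb{Z}$ (time first, then space), $\Delta_{k,w}$ is precisely the decrement of $H(X_0^0\mid X_0^{<0},\,\cdot\,)$ produced by revealing the single observation $Y_k^w$, since the observations lexicographically preceding $(k,w)$ inside the slab are exactly those generating $\mathcal{F}_{k,w}$. These decrements telescope:
\[
\sum_{k=1}^n\sum_{w\in\mathbb{Z}}\Delta_{k,w} = I\big(X_0^0;Y_1,\ldots,Y_n \mid X_0^{<0}\big) \le H(X_0^0\mid X_0^{<0}) \le \log 2 .
\]

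Combining the two identities gives $\sum_{k=1}^n a_k=I(X_0^0;Y_1,\ldots,Y_n\mid X_0^{<0})\le\log 2$ for every $n$; since $a_k\ge0$ the series converges and $a_k\to0$, which is the assertion. As in Proposition \ref{prop:babyobs}, observability ($p\ne\tfrac12$) plays no role in this step: the present proposition only expresses that predicting the next \emph{observation} forgets $X_0$, and observability will enter afterwards when this is converted into a statement about the next \emph{state}.

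The step I expect to require the most care is the passage through infinitely many variables in both identities: the spatial chain rule for $a_k$ and the lexicographic telescoping each involve an index set of order type $\mathbb{Z}$ (with no least element), so they must be read as limits of their finite-box truncations. The saving grace is that the anchored variable is a single binary spin ($X_0^0$ in the telescoping, $Y_k^0$ in the chain rule), so every information term is uniformly bounded by $\log 2$ and the increasing or decreasing families of $\sigma$-fields yield monotone limits of mutual information that converge. The genuine technical content is therefore to justify the interchange of the two infinite summations and to verify that, under the spatial shift, the conditioning $\sigma$-fields $\mathcal{F}_{k,0}$ and $X_0^{<u}$ map exactly onto $\mathcal{F}_{k,-u}$ and $X_0^{<0}$; once this bookkeeping is in place the estimate is immediate.
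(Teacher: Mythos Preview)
Your overall strategy---reducing $a_k$ to a per-site information budget $H(X_0^0\mid X_0^{<0})$ via translation invariance---is exactly the mechanism behind the paper's proof, which invokes Conze's multidimensional entropy identity (Remark~\ref{rem:conze} and \cite[eq.~(20)]{Con72}) for precisely this purpose. However, there is a genuine gap in the step you describe as ``bookkeeping.''

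The issue is the chain rule over an index set of order type $\mathbb{Z}$. Writing $s_k:=\sum_{u\in\mathbb{Z}}I(Y_k^0;X_0^u\mid\mathcal{F}_{k,0},X_0^{<u})$, the truncation
\[
a_k \;=\; I\big(Y_k^0;X_0^{\le -m}\mid\mathcal{F}_{k,0}\big) \;+\; \sum_{u>-m}I\big(Y_k^0;X_0^u\mid\mathcal{F}_{k,0},X_0^{<u}\big)
\]
lets $m\to\infty$: the sum increases to $s_k$, while the first term converges (reverse martingale along the decreasing family $\mathcal{F}_{k,0}\vee\sigma(X_0^{\le -m})$) to
$H(Y_k^0\mid\mathcal{F}_{k,0})-H\big(Y_k^0\mid\bigcap_m(\mathcal{F}_{k,0}\vee\sigma(X_0^{\le -m}))\big)\ge 0$.
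So monotone limits give only $s_k\le a_k$, the wrong direction for your bound. The same happens on the other side, yielding $s_k\le c_k$; from $\sum_k c_k\le\log 2$ you obtain $\sum_k s_k\le\log 2$ but not $\sum_k a_k\le\log 2$. The missing equality $a_k=s_k$ is equivalent to
$\bigcap_m\big(\mathcal{F}_{k,0}\vee\sigma(X_0^{\le -m})\big)=\mathcal{F}_{k,0}\ \mathrm{mod}\,\mathbf{P}$,
an exchange of intersection and supremum that does not follow from monotone convergence alone and is of exactly the delicate type the paper discusses in Section~\ref{sec:meas}. (By contrast, the interchange of $\sum_k$ and $\sum_w$ is trivial by Tonelli, since all terms are nonnegative; that is not where the difficulty lies.)

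Establishing the required equality is precisely the content of Conze's identity, whose proof exploits that the specific entropy of the joint space--time field admits two lexicographic decompositions that must agree. The paper cites this result directly; your argument is in effect a sketch of its proof, but the step you call bookkeeping is where the substantive work resides.
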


\begin{rem}
\label{rem:conze}
Consider a stationary stochastic process $(Z_k)_{k\in\mathbb{Z}}$
such that $Z_k$ takes values in a finite set.  The entropy rate $h(Z)$ of 
the process can be expressed as
$$
	h(Z) := \lim_{n\to\infty} \frac{H(Z_1,\ldots,Z_n)}{n} =
	\lim_{n\to\infty}\frac{1}{n}\sum_{k=1}^n
	H(Z_k|Z_1,\ldots,Z_{k-1}) =
	H(Z_1|Z_0,Z_{-1},\ldots),
$$
where we used the chain rule and stationarity, respectively.
This can be used to derive nontrivial identities for conditional 
entropies.  Of particular importance in the present context is that if 
$Z_k=(X_k,Y_k)$ (still taking values in a finite set), 
then \cite[Lemma 18.2]{Gla03}
$$
	\lim_{n\to\infty}
	H(Y_1|Y_0,Y_{-1},\ldots;X_{-n},X_{-n-1},\ldots)=
	H(Y_1|Y_0,Y_{-1},\ldots).
$$
This result, which could be employed directly in Proposition 
\ref{prop:babyobs} instead of the simpler argument given there,
arises in the proof of the Rokhlin-Sinai characterization of Kolmogorov 
automorphisms in ergodic theory, cf.\ \cite[Chapter 18]{Gla03}.

The key to the proof of Proposition \ref{prop:pinsker} is that
the above ideas admit a multidimensional extension. Let 
$(Z^q)_{q\in\mathbb{Z}^2}$ be a translation-invariant random field
such that $Z^q$ takes values in a finite set.  In this setting, the 
entropy rate (or `specific entropy') $h(Z)$ can be expressed in terms 
of the lexicographic order $\prec$ on $\mathbb{Z}^2$ \cite{Fol73,Con72}:
$$
	h(Z) := \lim_{n\to\infty} \frac{H(Z^{B_n})}{|B_n|} =
	H(Z^q|Z^{\prec q}),
$$
where $B_n$ is the centered box in $\mathbb{Z}^2$ with radius
$n$ and $Z^{B_n}=\{Z^q:q\in B_n\}$, $Z^{\prec q}=\{Z^u:u\prec q\}$.
The random field analogue to the above entropy identity was obtained by 
Conze \cite[eq.\ (20)]{Con72}: if $Z^q=(X_k^v,Y_k^v)$ for 
$q=(k,v)\in\mathbb{Z}^2$ take values in a finite set, then
$$
	\lim_{n\to\infty}
	H(Y_1^v|Y_1^{<v},Y_0,Y_{-1},\ldots;
	X_{-n}^{<v},X_{-n-1},X_{-n-2},\ldots)=
	H(Y_1^v|Y_1^{<v},Y_0,Y_{-1},\ldots)
$$
for any $v\in\mathbb{Z}$.  The proof of Proposition \ref{prop:pinsker} 
follows from this identity.
\end{rem}

\begin{rem}
By arguing as in the previous remark, the result of Proposition 
\ref{prop:pinsker} can be rewritten in terms
of \emph{conditional entropy rates}. If we define
\begin{align*}
	h(Y_k|Y_1,\ldots,Y_{k-1})&:=
	\lim_{n\to\infty}\frac{H(Y_k^1,\ldots,Y_k^n|Y_1,\ldots,Y_{k-1})}{n},
	\\
	h(Y_k|X_0,Y_1,\ldots,Y_{k-1})&:=
	\lim_{n\to\infty}\frac{H(Y_k^1,\ldots,Y_k^n|X_0,Y_1,\ldots,Y_{k-1})}{n},
\end{align*}
then the conclusion of
Proposition \ref{prop:pinsker} can be expressed as follows:
$$
	h(Y_k|Y_1,\ldots,Y_{k-1})-
	h(Y_k|X_0,Y_1,\ldots,Y_{k-1})\xrightarrow{k\to\infty}0.
$$
This could be interpreted as a direct extension of the key step in the 
proof of Proposition \ref{prop:babyobs} to translation-invariant systems 
in infinite dimension; the finite-dimensional notion of entropy is simply 
replaced by its infinite-dimensional counterpart, the entropy 
rate.  
\end{rem}

\begin{proof}[Proof of Proposition \ref{prop:pinsker}]
By stationarity, it suffices to show that
$$
	H(Y_0^v|Y_{-k+1},\ldots,Y_{-1},Y_0^{<v})-
	H(Y_0^v|X_{-k},Y_{-k+1},\ldots,Y_{-1},Y_0^{<v})
	\xrightarrow{k\to\infty}0.
$$
First, we note that
\begin{align*}
	\lim_{k\to\infty}H(Y_0^v|Y_{-k+1},\ldots,Y_{-1},Y_0^{<v})&=
	H(Y_0^v|Y_0^{<v},Y_{-1},Y_{-2},\ldots) \\ &=
	\lim_{k\to\infty}
	H(Y_0^v|Y_0^{<v},Y_{-1},\ldots;
	X_{-k}^{<v},X_{-k-1},\ldots),
\end{align*}
where we have used the identity of Conze \cite[eq.\ (20)]{Con72} (cf.\ 
Remark \ref{rem:conze} above).  As
\begin{align*}
	H(Y_0^v|Y_0^{<v},Y_{-1},\ldots;
	X_{-k+1}^{<v},X_{-k},\ldots)
	&\le
	H(Y_0^v|Y_0^{<v},Y_{-1},\ldots;
	X_{-k},X_{-k-1},\ldots) 
	\\ &\le
	H(Y_0^v|Y_0^{<v},Y_{-1},\ldots;
	X_{-k}^{<v},X_{-k-1},\ldots),
\end{align*}
we obtain
$$
	\lim_{k\to\infty}
	H(Y_0^v|Y_0^{<v},Y_{-1},\ldots;
	X_{-k}^{<v},X_{-k-1},\ldots) =
	\lim_{k\to\infty}
	H(Y_0^v|Y_0^{<v},Y_{-1},\ldots;
	X_{-k},X_{-k-1},\ldots).
$$
But by the 
hidden Markov model structure, $\{Y_{-k},Y_{-k-1},\ldots;
X_{-k-1},X_{-k-2},\ldots\}$ and
$\{Y_0,Y_{-1},\ldots,Y_{-k+1}\}$ are conditionally independent given 
$X_{-k}$.  Thus
$$
	H(Y_0^v|Y_0^{<v},Y_{-1},\ldots;
	X_{-k},X_{-k-1},\ldots) =
	H(Y_0^v|Y_0^{<v},Y_{-1},\ldots,Y_{-k+1},X_{-k}),
$$
and the proof is complete.
\end{proof}

Proposition \ref{prop:pinsker} concerns the stability of prediction of the 
next observation.  As in the proof of Proposition \ref{prop:babyobs}, we 
will transform such properties into stability properties of the filter by 
using the informative nature of the observations.

\begin{proof}[Proof of Theorem \ref{thm:iobs}]
By translation-invariance and Lemma \ref{lem:finapprox}, it suffices
to show that
$$
	\mathbf{P}[(X_k^1,\ldots,X_k^m)\in B|X_0,Y_1,\ldots,Y_{k-1},Y_k^{<v}]-
	\mathbf{P}[(X_k^1,\ldots,X_k^m)\in B|Y_1,\ldots,Y_{k-1},Y_k^{<v}]
	\to 0
$$
in $L^1$ as $k\to\infty$ for every set $B$, $m\ge 1$ and $v\in\mathbb{Z}$.

Suppose first that $v\le 1$.  By the chain rule for entropy and
Proposition \ref{prop:pinsker}, we have
\begin{multline*}
	H(Y_k^v,\ldots,Y_k^m|Y_1,\ldots,Y_{k-1},Y_k^{<v})-
	H(Y_k^v,\ldots,Y_k^m|X_0,Y_1,\ldots,Y_{k-1},Y_k^{<v}) = \\
	\sum_{\ell=v}^m\{
	H(Y_k^\ell|Y_1,\ldots,Y_{k-1},Y_k^{<\ell})-
	H(Y_k^\ell|X_0,Y_1,\ldots,Y_{k-1},Y_k^{<\ell})\} 
	\xrightarrow{k\to\infty}0.
\end{multline*}
Following verbatim the second part of the proof of Proposition
\ref{prop:babyobs} yields
$$
	\mathbf{P}[(X_k^v,\ldots,X_k^m)\in C|X_0,Y_1,\ldots,Y_{k-1},Y_k^{<v}]-
	\mathbf{P}[(X_k^v,\ldots,X_k^m)\in C|Y_1,\ldots,Y_{k-1},Y_k^{<v}]
	\to 0
$$
in $L^1$ as $k\to\infty$ for every set $C$, and thus the result follows.

Now suppose that $v>1$.  We can assume without loss of generality that
$v\le m$ (otherwise the conclusion follows from the result for $m=v$).
We may also assume that $0<p<1$ (otherwise the 
conclusion is trivial).  By the Bayes formula,
\begin{multline*}
	\mathbf{P}[(X_k^1,\ldots,X_k^m)=x|Y_1,\ldots,Y_{k-1},Y_k^{<v}] = 
\\	\frac{
	\mathbf{P}[(X_k^1,\ldots,X_k^m)=x|Y_1,\ldots,Y_{k-1},Y_k^{<1}]
	\prod_{i=1}^v g(x^i,Y_k^i)}{
	\sum_{z\in\{-1,1\}^m}
	\mathbf{P}[(X_k^1,\ldots,X_k^m)=z|Y_1,\ldots,Y_{k-1},Y_k^{<1}]
	\prod_{i=1}^v g(z^i,Y_k^i)}
\end{multline*}
where $g(u,y)=\mathbf{P}[Y_k^i=y|X_k^i=u]$,
and $\mathbf{P}[(X_k^1,\ldots,X_k^m)=x|X_0,Y_1,\ldots,Y_{k-1},Y_k^{<v}]$
satisfies the analogous expression.
As $0<\inf g\le \sup g<\infty$, it follows readily that
$$
	\mathbf{P}[(X_k^1,\ldots,X_k^m)=x|X_0,Y_1,\ldots,Y_{k-1},Y_k^{<1}]-
	\mathbf{P}[(X_k^1,\ldots,X_k^m)=x|Y_1,\ldots,Y_{k-1},Y_k^{<1}]\to 0
$$
for all $x$ implies
$$
	\mathbf{P}[(X_k^1,\ldots,X_k^m)=x|X_0,Y_1,\ldots,Y_{k-1},Y_k^{<v}]-
	\mathbf{P}[(X_k^1,\ldots,X_k^m)=x|Y_1,\ldots,Y_{k-1},Y_k^{<v}]\to 0
$$
for all $x$, and thus the proof is complete.
\end{proof}

\begin{rem}
\label{rem:weirdexchg}
It is evident that the intermediate filter reduces to the filter if we let 
$v\to\infty$; thus Conjecture \ref{conj:obs} would be established for 
translation-invariant models if the limits as $k\to\infty$ and 
$v\to\infty$ could be exchanged in Theorem \ref{thm:iobs}.  Similarly, 
we could aim to obtain the conclusion of Conjecture \ref{conj:obs} for the 
prediction filter by letting $v\to-\infty$.
However, we do not know how to establish the validity of Theorem 
\ref{thm:iobs} in either limit.

To obtain some insight into this idea, let us rewrite 
Theorem \ref{thm:iobs} in a measure-theoretic manner.
Using the Markov property and translation invariance, we obtain
\begin{multline*}
	\mathbf{E}|\mathbf{P}[X_k\in A|X_0,Y_1,\ldots,Y_{k-1},Y_k^{<v}]-
	\mathbf{P}[X_k\in A|Y_1,\ldots,Y_{k-1},Y_k^{<v}]| = \\
	\mathbf{E}|\mathbf{P}[X_0\in 
	A|Y_0^{<v},Y_{-1},Y_{-2},\ldots;X_{-k},X_{-k-1},\ldots] -
	\mathbf{P}[X_0\in A|Y_0^{<v},Y_{-1},\ldots,Y_{-k+1}]|
\end{multline*}
as in the proof of Proposition \ref{prop:pinsker}.  Letting $k\to\infty$
and using Theorem \ref{thm:iobs} yields
\begin{align*}
	0&=\lim_{k\to\infty}
	\mathbf{E}|\mathbf{P}[X_k\in A|X_0,Y_1,\ldots,Y_{k-1},Y_k^{<v}]-
	\mathbf{P}[X_k\in A|Y_1,\ldots,Y_{k-1},Y_k^{<v}]| \\ &= 
	\mathbf{E}|\mathbf{P}[X_0\in A|\textstyle{\bigcap_k}
	(\mathcal{Y}_-^v\vee\mathcal{X}_{-k})]-
	\mathbf{P}[X_0\in A|\mathcal{Y}_-^v]|,
\end{align*}
where we defined the $\sigma$-fields 
$\mathcal{X}_{k}=\sigma\{X_k,X_{k-1},\ldots\}$ and
$\mathcal{Y}_-^v=\sigma\{Y_0^{<v},Y_{-1},Y_{-2},\ldots\}$.

Now let us attempt to take the limit as $v\to-\infty$.  This yields
$$
	\mathbf{E}|\mathbf{P}[X_0\in A|\textstyle{\bigcap_{k,v}}
	(\mathcal{Y}_-^v\vee\mathcal{X}_{-k})]-
	\mathbf{P}[X_0\in A|
	\textstyle{\bigcap_{v}}\mathcal{Y}_-^v]| = 0.
$$
This does not suffice to establish Conjecture \ref{conj:obs} for the 
prediction filter.  In order to deduce the latter, we would need to 
establish the identity
$$
	\bigcap_v\mathcal{Y}_-^v = \mathcal{Y}_- := \sigma\{Y_{-1},Y_{-2},
	\ldots\}\quad\mathop{\mathrm{mod}}\mathbf{P}
$$
(the notation $\mathcal{F}=\mathcal{G}$ $\mathrm{mod}\,\mathbf{P}$
indicates that the $\mathbf{P}$-completions of the $\sigma$-fields 
$\mathcal{F}$ and $\mathcal{G}$ coincide).
Indeed, if this is the case, then we obtain by Jensen's inequality
\begin{align*}
	&\lim_{k\to\infty}
	\mathbf{E}|\mathbf{P}[X_k\in A|X_0,Y_1,\ldots,Y_{k-1}]-
	\mathbf{P}[X_k\in A|Y_1,\ldots,Y_{k-1}]| 
	\\ &= 
	\mathbf{E}|\mathbf{P}[X_0\in A|\textstyle{\bigcap_k}
	(\mathcal{Y}_-\vee\mathcal{X}_{-k})]-
	\mathbf{P}[X_0\in A|\mathcal{Y}_-]| 
	\\ &\le
	\mathbf{E}|\mathbf{P}[X_0\in A|\textstyle{\bigcap_{k,v}}
	(\mathcal{Y}_-^v\vee\mathcal{X}_{-k})]-
	\mathbf{P}[X_0\in A|\mathcal{Y}_-]| = 0,
\end{align*}
where we used $\bigcap_{k,v}(\mathcal{Y}_-^v\vee\mathcal{X}_{-k})\supseteq
\bigcap_{k}(\mathcal{Y}_-\vee\mathcal{X}_{-k})\supseteq
\mathcal{Y}_-$.

Similarly, let us take $v\to\infty$.  This yields
$$
	\mathbf{E}|\mathbf{P}[X_0\in A|\textstyle{\bigvee_{v}\bigcap_{k}}
	(\mathcal{Y}_-^v\vee\mathcal{X}_{-k})]-
	\mathbf{P}[X_0\in A|
	\textstyle{\bigvee_{v}}\mathcal{Y}_-^v]| = 0.
$$
In order to establish Conjecture \ref{conj:obs}, we would now need the
identity
$$
	\bigvee_{v}\bigcap_{k}  
        (\mathcal{Y}_-^v\vee\mathcal{X}_{-k})=
	\bigcap_{k}  
	\bigvee_{v}
        (\mathcal{Y}_-^v\vee\mathcal{X}_{-k})
	\quad\mathop{\mathrm{mod}}\mathbf{P},
$$
the remainder of the argument proceeding in the same manner
as for $v\to-\infty$.

Neither of the above measure-theoretic identities appears to be obvious;
indeed, the problem of establishing such identities is closely related to 
the filter stability problem itself (cf.\ section \ref{sec:meas}).  
Nonetheless, the conclusion of Theorem \ref{thm:iobs} appears to be 
tantalizingly close to establishing Conjecture \ref{conj:obs} for 
translation-invariant systems, and the fact that the latter does not 
appear to follow directly from the former provides one more indication of 
the delicacy of the filter stability problem in infinite dimension.  A 
very similar argument will be used in the following section to resolve the 
continuous-time analogue of the problem; the key distinction in this case 
is that an appropriate measure-theoretic identity can in fact be 
established (Lemma \ref{lem:blum} below).
\end{rem}

\subsection{Continuous time}
\label{sec:conttime}

In the previous section we have developed a partial result on filter 
stability in the case of translation-invariant models with direct 
observation structure.  Unfortunately, that result concerns a quantity 
intermediate between the filter and prediction filter, and therefore 
falls short of resolving Conjecture \ref{conj:obs} even in the 
translation-invariant setting.  Surprisingly, however, it turns out that 
this problem can be resolved if we consider the natural \emph{continuous 
time} analogue of Conjecture \ref{conj:obs}, providing a complete proof 
of filter stability in the translation-invariant setting for 
continuous-time models with direct observations.  This idea will be developed
in the remainder of this section.

To define the continuous-time counterpart of the filtering model of 
Conjecture \ref{conj:obs}, we begin by considering a stationary Markov 
process $X=(X_t)_{t\in\mathbb{R}}$ with c\`adl\`ag paths with values in 
$\{-1,1\}^\mathbb{Z}$. We will assume that $X$ satisfies the Feller 
property, that is, that $x\mapsto\mathbf{E}^x[f(X_t)]$ is a quasilocal 
function for every $t$ and bounded quasilocal function $f$ (a function is 
called quasilocal if it is the uniform limit of functions that depend on a 
finite number of coordinates).\footnote{
	We recall that a function on $\{-1,1\}^{\mathbb{Z}}$ is quasilocal 
	if and only if it is continuous \cite[Remark 2.21]{Geo11}.
	Thus the definition given here is equivalent to the usual 
	definition of the Feller property of a Markov process.
} This mild 
condition ensures that the dynamics are local in a very weak sense (as 
compared to the much stronger local structure of the model in section 
\ref{sec:ihmm}, which was however not used in the previous section).
Markov processes of this type arise broadly in the literature on interacting
particle systems, cf.\ \cite{Lig05}.  To define the local 
observations $(Y_t)_{t\in\mathbb{R}}$, we introduce a `white noise' 
model of the form
$$
	dY_t^v = X_t^v\,dt+\sigma\,dW_t^v,\qquad
	Y_0^v=0,\qquad
	v\in\mathbb{Z},
$$
where $(W_t^v)_{t\in\mathbb{R}}$ are i.i.d.\ two-sided Brownian 
motions independent of $X$,
and $\sigma>0$ denotes the noise strength.  The process
$(X_t,Y_t)_{t\in\mathbb{R}}$ is a natural continuous-time analogue
of the model of Conjecture \ref{conj:obs}, and will be used in the
remainder of this section.

\begin{rem}
The details of the present model are not essential for our results.
The proof is easily extended to random fields on $\mathbb{Z}^d$ with
values in a finite state space, and to observation models other than
the usual white noise model (cf.\ \cite{vH09}).  For concreteness and
to avoid additional notation, we will work here in the simplest 
setting defined above.
\end{rem}

As in section \ref{sec:transinv}, we will further assume 
that the model is translation-invariant
$$
	(X_t^v,Y_t^v)_{t\in\mathbb{R},v\in\mathbb{Z}}
	\,\mathop{\stackrel{\mathrm{law}}{=}}\,
	(X_{t+s}^{v+w},Y_{t+s}^{v+w}-Y_{s}^{v+w}
	)_{t\in\mathbb{R},v\in\mathbb{Z}}
	\quad\mbox{for all }s\in\mathbb{R},~w\in\mathbb{Z}
$$
(note that, due to the additive nature of the observations, it is the
increments of $Y$ that are translation invariant and not $Y$ itself).
In this setting, we obtain the following result.

\begin{thm}
\label{thm:conttime}
For the model of this section, we have
$$
	|\mathbf{P}[X_t\in A|X_0,\{Y_s\}_{0\le s\le t}] -
	\mathbf{P}[X_t\in A|\{Y_s\}_{0\le s\le t}]|
	\xrightarrow{t\to\infty}0\quad\mbox{in }L^1
$$
for every measurable set $A$.
\end{thm}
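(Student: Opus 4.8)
The plan is to follow the two-step strategy behind Theorem \ref{thm:iobs}—an information-theoretic argument that prediction of future observations forgets the initial condition, followed by an observability argument transferring this to the hidden state—while replacing the discrete entropy identities by their continuous-time counterparts and, crucially, supplying the measure-theoretic identity whose absence obstructed the passage from the intermediate filter to the genuine filter in Remark \ref{rem:weirdexchg}. First I would use stationarity to reformulate the claim: shifting time by $-t$ and invoking the Markov property of $X$, filter stability is equivalent to
\[
	\mathbf{E}\bigl|\mathbf{P}[X_0\in A\mid\mathcal{X}_{-t}\vee\mathcal{Y}_{[-t,0]}]-
	\mathbf{P}[X_0\in A\mid\mathcal{Y}_{[-t,0]}]\bigr|\xrightarrow{t\to\infty}0,
\]
where $\mathcal{X}_{-t}:=\sigma\{X_s:s\le -t\}$ and $\mathcal{Y}_{[-t,0]}$ is generated by the observation increments on $[-t,0]$; as in section \ref{sec:transinv}, observability rather than ergodicity is the structural input.

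The first substantive step is the continuous-time analogue of Proposition \ref{prop:pinsker}. Here the Shannon entropies of the discrete theory are replaced by relative entropies computed through Girsanov's theorem: for the white-noise channel $dY_t^v=X_t^v\,dt+\sigma\,dW_t^v$, the relative entropy between the observation law on $[0,t]$ conditioned on $X_0$ and conditioned on the observation history alone is an integral over $[0,t]$ of the mean-square discrepancy between the two corresponding conditional estimates of $X_s$. This quantity is infinite in the infinite-dimensional model, but translation invariance over $\mathbb{Z}$ lets me pass to \emph{per-site} information rates, for which the single-site Gaussian channel contributes at most $O(\sigma^{-2})$ per unit time. Integrating in time and using finiteness of the per-site rate forces the instantaneous per-site discrepancy to vanish as $t\to\infty$, exactly as the summability of $H(X_0)\le r$ did in Proposition \ref{prop:babyobs}; Pinsker's inequality then yields $L^1$ convergence of the predicted observation law. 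Observability enters precisely as before: since the Gaussian channel $x\mapsto\mathbf{P}[\,\cdot\mid X=x]$ is invertible for $\sigma<\infty$, stability of the predicted \emph{observations} is equivalent to stability of the predicted \emph{state}.

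The main obstacle—and the reason the continuous-time setting succeeds where Theorem \ref{thm:iobs} stalls—is the exchange of limits discussed in Remark \ref{rem:weirdexchg}. The information argument naturally controls a prediction-type object, and upgrading it to the genuine filter requires identifying the limiting $\sigma$-field $\bigcap_t(\mathcal{X}_{-t}\vee\mathcal{Y}_{[-t,0]})$ correctly, so that the conditional expectations converge to $\mathbf{P}[X_0\in A\mid\mathcal{Y}_-]$ with $\mathcal{Y}_-:=\sigma\{Y_s-Y_u:u\le s\le 0\}$. This does not follow from abstract $\sigma$-field manipulations, because intersection does not distribute over the join. In discrete time the obstruction stemmed from the information carried by the \emph{current} time slice $Y_k$, which couples all spatial sites and forced the introduction of the intermediate filter conditioned on $Y_k^{<v}$. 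In continuous time this obstruction evaporates: the observation accrued over an infinitesimal time interval is pure Brownian noise to leading order, so the germ $\sigma$-field $\bigcap_{\varepsilon\downarrow 0}\sigma\{Y_s-Y_t:t\le s\le t+\varepsilon\}$ is $\mathbf{P}$-trivial by Blumenthal's $0$–$1$ law.

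I would isolate precisely this germ-field triviality as Lemma \ref{lem:blum} and use it to justify the conditional-independence and limit-exchange statements needed to convert the prediction-filter stability of the second paragraph into stability of the genuine filter; observability is what further guarantees that no residual tail information of $X$ survives outside $\mathcal{Y}_-$. I expect this Blumenthal-type identity to be the only genuinely new ingredient: once it is in place, the remaining steps are faithful transcriptions of the arguments of section \ref{sec:transinv}, combined with a finite-dimensional reduction in the spirit of Lemma \ref{lem:finapprox} to pass from cylinder functions to arbitrary measurable $A$.
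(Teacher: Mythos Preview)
Your high-level plan matches the paper's---entropy argument, observability, and a germ-field identity as the genuinely new ingredient---but the implementation you sketch has a gap in the entropy step, and this mislocates the role of Lemma \ref{lem:blum}.

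For the entropy step you propose a direct Girsanov computation with a ``per-site information rate'' bound. This is not enough: the difficulty is not that the total relative entropy is infinite, but that the chain-rule telescoping of Proposition \ref{prop:babyobs} rests on $H(X_0)<\infty$, and replacing $H$ by specific entropy in infinite spatial dimension requires a genuine multidimensional entropy identity---Conze's identity (Lemma \ref{lem:conze})---not merely a channel-capacity bound. The paper does not attempt a direct continuous-time argument; it \emph{discretizes} (time on a $\delta$-grid, space into blocks of $m+1$ sites, the path-valued observation increments via a countable generating class) to obtain a finite-valued translation-invariant field on $\mathbb{Z}^2$ to which Conze's identity applies verbatim. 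This is Proposition \ref{prop:contpinsker}, and its output is not stability of a prediction filter but of the same \emph{intermediate} object as in Theorem \ref{thm:iobs}: the filter conditioned additionally on the half-space observations $\{Y_s^{<v}\}_{s\in[0,\delta]}$.

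This is what Lemma \ref{lem:blum} actually removes. In continuous time there is no prediction-filter/filter dichotomy to bridge; the lemma instead shows $\bigcap_{\delta>0}\sigma\{Y_{\le 0},\{Y_s^{<0}\}_{s\in[0,\delta]},X_{\le -t}\}=\sigma\{Y_{\le 0},X_{\le -t}\}$ $\mathrm{mod}\,\mathbf{P}$, so that letting $\delta\downarrow 0$ eliminates the half-space conditioning produced by Conze's lexicographic ordering. Your germ-field intuition is correct, but the proof is not Blumenthal's $0$--$1$ law for the Brownian part of $Y$: the identity involves $X_{\le -t}$ and the drift contribution to $Y$, and the argument uses stationarity together with quasi-left continuity of the Feller process $X$ and Hunt's lemma. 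The observability step (Lemma \ref{lem:wiener}) is likewise more delicate than ``invertibility of the Gaussian channel,'' since one must approximate $f(X_0^0,\ldots,X_0^m)$ by conditional expectations of functions of $Y$ on vanishing intervals $[0,1/n]$; this is done via a Hahn--Banach argument and right-continuity of $X$.
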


This result evidently resolves, in the translation-invariant case, the 
continuous-time analogue of Conjecture \ref{conj:obs}.  We remark once 
more that stability of $X$ is not assumed.

To prove Theorem \ref{thm:conttime} we require a sharper version of the 
entropy identity of Conze that was used in the proof of Theorem 
\ref{thm:iobs} (cf.\ Remark \ref{rem:conze}).  The proof of the 
requisite identity, which we state presently, can be found in \cite[p.\ 
17, \S 8]{Con72}.

\begin{lem}[\cite{Con72}]
\label{lem:conze}
Let $(Z^q)_{q\in\mathbb{Z}^2}$ be a translation-invariant random field
where $Z^q = (\tilde X^v_k,\tilde Y^v_k)$ for $q=(k,v)\in\mathbb{Z}^2$
takes values in a finite set.  Then for any $n\ge 0$
\begin{align*}
	& H(\tilde Y_0^v|\tilde Y_0^{<v},\tilde Y_{<0}) -
	H(\tilde Y_0^v|\tilde Y_0^{<v},\tilde Y_{<0};
	\tilde X_{-n}^{<v},\tilde X_{<-n}) = \\
	&\qquad\qquad H(\tilde X_0^v|\tilde X_0^{<v},\tilde X_{<0};
	\tilde Y_{n}^v,\tilde Y_{n}^{<v},\tilde Y_{<n})-
	H(\tilde X_0^v|\tilde X_0^{<v},\tilde X_{<0};\tilde Y),
\end{align*}
where $\tilde Y_n^{<v}:=(\tilde Y_n^w)_{w<v}$, $\tilde Y_{<k}:=
(\tilde Y_n)_{n<k}$, and 
$\tilde Y:=(\tilde Y_n)_{n\in\mathbb{Z}}$.
\end{lem}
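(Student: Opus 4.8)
The plan is to rewrite both sides as conditional mutual informations and then match them term-by-term using a point reflection together with translation invariance. Write $o:=(0,v)$, $p:=(-n,v)$, $q:=(n,v)$, and for $u\in\mathbb{Z}^2$ abbreviate $\tilde X^{\prec u}:=\{\tilde X^w:w\prec u\}$, with $\tilde Y^{\prec u},\tilde Y^{\preceq u},\tilde Y^{\succ u}$ defined analogously; thus $\tilde Y^{\prec o}=(\tilde Y_0^{<v},\tilde Y_{<0})$, $\tilde X^{\prec p}=(\tilde X_{-n}^{<v},\tilde X_{<-n})$, and $\tilde Y^{\preceq q}=(\tilde Y_n^v,\tilde Y_n^{<v},\tilde Y_{<n})$. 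Setting $I(U;V\mid\mathcal C):=H(U\mid\mathcal C)-H(U\mid V,\mathcal C)$ (all entropies are finite because $\tilde X^o$ and $\tilde Y^o$ take finitely many values), the two sides of the claimed identity read
$$\mathrm{LHS}=I\big(\tilde Y^o;\tilde X^{\prec p}\mid\tilde Y^{\prec o}\big),\qquad \mathrm{RHS}=I\big(\tilde X^o;\tilde Y^{\succ q}\mid\tilde X^{\prec o},\tilde Y^{\preceq q}\big),$$
so it suffices to prove $\mathrm{LHS}=\mathrm{RHS}$.

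I would then expand each side as a telescoping sum over single sites taken in lexicographic order (the chain rule for conditional mutual information), yielding
$$\mathrm{LHS}=\sum_{u\prec p}I\big(\tilde Y^o;\tilde X^u\mid\tilde Y^{\prec o},\tilde X^{\prec u}\big),\qquad \mathrm{RHS}=\sum_{r\succ q}I\big(\tilde X^o;\tilde Y^r\mid\tilde X^{\prec o},\tilde Y^{\prec r}\big),$$
where on the right the conditioning $\tilde Y^{\preceq q}$ together with the observations already absorbed between $q$ and $r$ reassembles into the full lexicographic past $\tilde Y^{\prec r}$. The heart of the argument is the point reflection $\rho:u\mapsto 2o-u$ through the center $o$. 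Since $o$ is the midpoint of $p=(-n,v)$ and $q=(n,v)$, a direct check shows that $\rho$ is an involution carrying $\{u\prec p\}$ bijectively onto $\{r\succ q\}$. Moreover, for each fixed $u\prec p$ the measure-preserving shift $\tau_{o-u}$ sends $u\mapsto o$ and $o\mapsto\rho(u)$, maps the $\tilde X$-past of $u$ onto the $\tilde X$-past of $o$, and maps the $\tilde Y$-past of $o$ onto the $\tilde Y$-past of $\rho(u)$; since shifts preserve entropy, the $u$-th term of the LHS equals $I(\tilde Y^{\rho(u)};\tilde X^o\mid\tilde Y^{\prec\rho(u)},\tilde X^{\prec o})$, which by the symmetry $I(U;V\mid\mathcal C)=I(V;U\mid\mathcal C)$ is exactly the $r$-th term of the RHS with $r=\rho(u)$. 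Summing over $\rho$ gives $\mathrm{LHS}=\mathrm{RHS}$. Note that a \emph{different} shift is used for each term: translation invariance is applied site-by-site and $\rho$ only records the index correspondence, so no (false) reflection invariance of the field is required.

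The genuine difficulty is technical rather than conceptual: each summand above conditions on an \emph{infinite} lexicographic past, and neither $\{u\prec p\}$ nor $\{r\succ q\}$ is well-ordered by $\prec$ (every time-row below a given time has no least element), so the chain rule cannot be applied by adding one site at a time. I would instead exhaust the conditioned families by increasing finite sets $F_M\uparrow\{u\prec p\}$ and $G_M\uparrow\{r\succ q\}$, apply the ordinary finite chain rule to $I(\tilde Y^o;\tilde X^{F_M}\mid\tilde Y^{\prec o})$ and to $I(\tilde X^o;\tilde Y^{G_M}\mid\tilde X^{\prec o},\tilde Y^{\preceq q})$, and pass to the limit using continuity of conditional mutual information under monotone limits of the conditioning $\sigma$-fields (L\'evy/martingale convergence). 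Here the finite-entropy bounds $H(\tilde Y^o),H(\tilde X^o)<\infty$ guarantee that all sums are nonnegative with bounded total, which legitimizes interchanging the sum over sites with the limit $M\to\infty$ that upgrades the finite partial pasts to the full pasts $\tilde X^{\prec u}$ and $\tilde Y^{\prec r}$. Arranging the exhaustions $F_M$ and $G_M$ to correspond under $\rho$, the finite identities produced by the term matching transport to the limit and yield the lemma. This last step --- a careful double limit controlled by the entropy bound --- is where I expect all the real work to lie; the lexicographic order is exactly what makes it go through, since it is the order for which both the $\tilde X$-past and the $\tilde Y$-past are down-sets and for which reflection through $o$ realizes the required past/future exchange.
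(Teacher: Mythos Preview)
Your term-by-term matching via the shift $\tau_{o-u}$ is a nice observation and is indeed correct: each individual identity
$I(\tilde Y^o;\tilde X^u\mid\tilde Y^{\prec o},\tilde X^{\prec u})=I(\tilde X^o;\tilde Y^{\rho(u)}\mid\tilde X^{\prec o},\tilde Y^{\prec\rho(u)})$
holds. The gap is in the chain-rule expansion itself. For an index set with no least element, the identity
\[
I\bigl(A;(B_u)_{u\prec p}\bigm|\mathcal{C}\bigr)=\sum_{u\prec p}I\bigl(A;B_u\bigm|\mathcal{C},(B_w)_{w\prec u}\bigr)
\]
is \emph{false} in general: take $A$ a fair coin, $B_j=A$ for all $j<0$, $\mathcal{C}$ trivial; then the left side is $H(A)=1$ while every term on the right is $I(A;A\mid A)=0$. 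Your proposed interchange of sum and limit runs into exactly this obstruction---with $F_M=\{-M,\dots,-1\}$ the finite partial sums all equal $1$, yet each summand tends to $0$ as the conditioning fills in---so boundedness of the total does not control the double limit. What your argument actually establishes is that the two infinite sums (over $u\prec p$ and over $r\succ q$) agree; but neither sum has been shown to equal the corresponding side of the lemma. The discrepancy is a tail contribution, and nothing in the setup forces it to vanish or to cancel between the two sides.

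The paper does not give its own proof (it cites Conze), but the argument in \cite{Con72} avoids any site-by-site expansion. One route: set $W^u:=(\tilde X^{u-(n,0)},\tilde Y^u)$; this is again a translation-invariant field with $h(W)=h(Z)$, and the lexicographic entropy formula gives
\[
h(Z)=H(W^o\mid W^{\prec o})=H(\tilde X^p,\tilde Y^o\mid\tilde X^{\prec p},\tilde Y^{\prec o}).
\]
Splitting off $\tilde Y^o$ first and shifting the remaining term by $(n,0)$ yields
$h(Z)=H(\tilde Y^o\mid\tilde Y^{\prec o},\tilde X^{\prec p})+H(\tilde X^o\mid\tilde X^{\prec o},\tilde Y^{\preceq q})$.
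Combined with the conditional entropy-rate identity $h(Z)=H(\tilde Y^o\mid\tilde Y^{\prec o})+H(\tilde X^o\mid\tilde X^{\prec o},\tilde Y)$ (also in \cite{Con72,Fol73}), this rearranges to the statement of the lemma. The point is that the lexicographic formula $h(Z)=H(Z^o\mid Z^{\prec o})$ already packages the double limit you are trying to perform; invoking it directly for the shifted field $W$ sidesteps the chain-rule difficulty entirely.
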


Let us note that the identity in Remark \ref{rem:conze} follows immediately
as $n\to\infty$.

The problem in continuous time is that we no longer have a discrete 
random field as in Lemma \ref{lem:conze}.  We address this by an 
appropriate discretization method, which yields the following
continuous-time counterpart of Proposition \ref{prop:pinsker}.

\begin{prop}
\label{prop:contpinsker}
Let $\delta>0$ and $v\in\mathbb{Z}$, $m\in\mathbb{N}$ be arbitrary, and define
\begin{align*}
	\rho_s &:= 
	\mathbf{P}[\{Y_t^v,\ldots,Y_t^{v+m}\}_{t\in[0,\delta]}\in\,\cdot\,|
	Y_{\le 0},\{Y_t^{<v}\}_{t\in[0,\delta]},
	X_{\le s}],\\
	\rho &:=
	\mathbf{P}[\{Y_t^v,\ldots,Y_t^{v+m}\}_{t\in[0,\delta]}\in\,\cdot\,|
	Y_{\le 0},\{Y_t^{<v}\}_{t\in[0,\delta]}],
\end{align*}
where $X_{\le s}:=(X_t)_{t\le s}$ and $Y_{\le 0}:=(Y_t)_{t\le 0}$.
Then
$$
	\mathbf{E}[D(\rho_s||\rho)]
	\xrightarrow{s\to-\infty}0,
$$
where $D(\mu||\nu)$ denotes relative entropy.
\end{prop}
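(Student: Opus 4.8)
The plan is to recast the statement as a \emph{tail-triviality} property and to establish it by discretizing in time so that Conze's identity (Lemma~\ref{lem:conze}) becomes applicable. Write $\mathcal{O}:=\{Y_t^v,\ldots,Y_t^{v+m}\}_{t\in[0,\delta]}$ for the observation block, $\mathcal{H}:=\sigma(Y_{\le 0},\{Y_t^{<v}\}_{t\in[0,\delta]})$ for the common conditioning field, and $\mathcal{T}:=\bigcap_{s}\sigma(X_{\le s})$ for the remote past of the signal, so that $\rho_s=\mathbf{P}[\mathcal{O}\in\,\cdot\,|\mathcal{H}\vee\sigma(X_{\le s})]$ and $\rho=\mathbf{P}[\mathcal{O}\in\,\cdot\,|\mathcal{H}]$. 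Then $\mathbf{E}[D(\rho_s\|\rho)]$ is the conditional mutual information $I(\mathcal{O};X_{\le s}\,|\,\mathcal{H})$, which is nonincreasing as $s\to-\infty$ by the data-processing inequality and is bounded uniformly in $s$: since the observations form a Gaussian (white-noise) channel over the finite window $[0,\delta]$ with $|X_t^\ell|=1$, Girsanov's theorem (Duncan's formula) gives $D(\rho_s\|\rho)\le \sigma^{-2}2(m+1)\delta$ almost surely. This finiteness licenses continuity from above of conditional mutual information along the decreasing family $\sigma(X_{\le s})\downarrow\mathcal{T}$, so that $\mathbf{E}[D(\rho_s\|\rho)]\to I(\mathcal{O};\mathcal{T}\,|\,\mathcal{H})=\mathbf{E}[D(\rho_\infty\|\rho)]$, where $\rho_\infty:=\mathbf{P}[\mathcal{O}\in\,\cdot\,|\mathcal{H}\vee\mathcal{T}]$. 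It therefore suffices to prove that $\rho_\infty=\rho$ almost surely, i.e.\ that $\mathcal{O}$ is conditionally independent of the remote past $\mathcal{T}$ given $\mathcal{H}$.

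The second step produces the finite-state field needed for Conze's identity. Fixing a quantization level $N$, I would discretize time into cells $[k\delta,(k+1)\delta)$ and replace, at each cell $k$ and site $w$, the signal path $\{X^w_t\}_{t\in[k\delta,(k+1)\delta)}$ and the observation increment $\{Y_t^w-Y_{k\delta}^w\}_{t\in[k\delta,(k+1)\delta)}$ by finite-valued, translation-equivariant quantizations $\tilde X_k^w$ and $\tilde Y_k^w$. By translation invariance of the model (in the increments of $Y$), the resulting field $\tilde Z^q=(\tilde X_k^w,\tilde Y_k^w)$, $q=(k,w)\in\mathbb{Z}^2$, is a finite-state translation-invariant random field, to which Lemma~\ref{lem:conze} applies. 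Summing that identity over the sites $\ell=v,\ldots,v+m$ of the block with lexicographically increasing conditioning (the chain rule for conditional entropy, exactly as in the reduction in the proof of Theorem~\ref{thm:iobs}), the left-hand side becomes the discretized conditional mutual information $I(\tilde{\mathcal{O}}^N;X_{\le -n\delta}\,|\,\mathcal{H}^N)$, while the right-hand side is a conditional entropy gap of the current signal cell given observations up to the lexicographic horizon $n$ versus given all observations. As $n\to\infty$ the conditioning observation field increases to $\sigma(\tilde Y)$, so the right-hand side tends to $0$ by martingale convergence. Hence, for every fixed $N$, the discretized conditional mutual information vanishes as $n\to\infty$; since at level $N$ everything is finite-state, continuity from above yields $\mathbf{E}[D(\rho_\infty^N\|\rho^N)]=0$, i.e.\ $\rho_\infty^N=\rho^N$ almost surely.

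Finally I would remove the discretization by letting $N\to\infty$. Choosing the quantizations to be refining, the discretized blocks $\tilde{\mathcal{O}}^N$ increase to $\mathcal{O}$ and the conditioning fields $\mathcal{H}^N\vee\mathcal{T}^N$ and $\mathcal{H}^N$ increase to $\mathcal{H}\vee\mathcal{T}$ and $\mathcal{H}$; Lévy's upward theorem then upgrades the relations $\rho_\infty^N=\rho^N$, valid for each $N$, to $\rho_\infty=\rho$ almost surely, which by the first paragraph completes the proof.

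The main obstacle is precisely the interchange of the two limits: the horizon limit $s\to-\infty$, where Conze's identity forces the information to vanish, and the discretization limit $N\to\infty$, where the finite-state field is recovered. These limits do not commute in general — a naive exchange of $\sup_N\lim_s$ with $\lim_s\sup_N$ is false — and the role of the tail reformulation in the first paragraph is exactly to fix the admissible order: take $s\to-\infty$ first at each fixed resolution $N$ (legitimate, since Conze's identity is exact and the level-$N$ field is finite-state), and only then refine $N\to\infty$. Carrying this out rigorously requires two supporting facts that I would verify with care: that the cell-wise quantizations can be chosen finite-valued, translation-equivariant, \emph{and} refining to the full path and remote-past $\sigma$-fields; and that conditional mutual information is continuous from above along $\sigma(X_{\le s})\downarrow\mathcal{T}$, which is where the Girsanov bound enters, guaranteeing the finiteness needed for this continuity. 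The entropic and observability content is entirely carried by Conze's identity; the remaining work is the measure-theoretic bookkeeping of these two limits.
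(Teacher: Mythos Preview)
Your tail reformulation in the first paragraph contains a genuine gap that undermines the entire strategy. You claim that finiteness of $D(\rho_s\|\rho)$ ``licenses continuity from above of conditional mutual information along the decreasing family $\sigma(X_{\le s})\downarrow\mathcal{T}$,'' i.e.\ that $\mathbf{E}[D(\rho_s\|\rho)]\to\mathbf{E}[D(\rho_\infty\|\rho)]$ with $\rho_\infty=\mathbf{P}[\mathcal{O}\in\cdot\,|\mathcal{H}\vee\mathcal{T}]$. This is false in general, and boundedness does not help. Reverse martingale convergence gives $\rho_s\to\mathbf{P}[\mathcal{O}\in\cdot\,|\bigcap_s(\mathcal{H}\vee\sigma(X_{\le s}))]$, and the identity $\bigcap_s(\mathcal{H}\vee\sigma(X_{\le s}))=\mathcal{H}\vee\mathcal{T}$ is precisely the exchange of intersection and supremum of $\sigma$-fields that the paper shows can fail (section~\ref{sec:meas}). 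Indeed, in Blackwell's Example~\ref{ex:bla} with $\mathcal{O}=X_0$, $\mathcal{H}=\sigma(Y_0,Y_{-1},\ldots)$, every quantity is finite-valued and bounded, yet $I(X_0;X_{\le -n}|\mathcal{H})=\log 2$ for all $n$ while $I(X_0;\mathcal{T}|\mathcal{H})=0$ since $\mathcal{T}$ is trivial. So showing $\rho_\infty=\rho$ would not imply the proposition. The same issue recurs in your final step: the claim that $\mathcal{H}^N\vee\mathcal{T}^N\uparrow\mathcal{H}\vee\mathcal{T}$ requires $\bigvee_N\bigcap_n\sigma(\tilde X_{\le -n}^{(N)})=\bigcap_n\bigvee_N\sigma(\tilde X_{\le -n}^{(N)})$, another instance of the same forbidden exchange.

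The paper's proof avoids tail $\sigma$-fields entirely. It exploits two features you did not use. First, since $X_t\in\{-1,1\}^{\mathbb{Z}}$, the \emph{sampled} signal $\tilde X_k^r:=(X_{k\delta}^{r(m+1)},\ldots,X_{k\delta}^{r(m+1)+m})$ is already finite-valued---only $\tilde Y$ needs quantization, via a generating sequence $\kappa_j$. Second, and crucially, the right-hand side of Conze's identity (Lemma~\ref{lem:conze}) is a conditional entropy of the finite-valued $\tilde X_0^0$; this allows the paper to let the $Y$-quantization level $j\to\infty$ \emph{at fixed horizon $n$} (the right-hand side converges by martingale convergence since $\tilde X_0^0$ has finite entropy), obtaining for each $n$ an upper bound on the full expected relative entropy $\mathbf{E}[D(\rho_{-(n+1)\delta}\|\rho)]$ by a quantity that vanishes as $n\to\infty$. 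The Markov property then identifies the conditional distribution given the discrete samples $\tilde X_{<-n}$ with $\rho_{-(n+1)\delta}$. No tail field, no exchange of limits in the problematic direction. Your acknowledgment of a limit-exchange obstacle is correct, but you located it in the wrong place: the obstacle is not between $s\to-\infty$ and $N\to\infty$, it is already inside your first paragraph.
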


\begin{proof}
Let us choose $v=0$ for simplicity.  The result for arbitrary $v$ follows
immediately by translation invariance.  In the following, we fix
$\delta>0$ and $m\in\mathbb{N}$.

Define the random field $Z=(Z^q)_{q\in\mathbb{Z}^2}$ with $Z^q=(\tilde 
X_k^r,\tilde Y_k^r)$ for $q=(k,r)\in\mathbb{Z}^2$ as
\begin{align*}
	\tilde X_k^r &:= 
	(X_{k\delta}^{r(m+1)},\cdots,X_{k\delta}^{r(m+1)+m}),\\	
	\tilde Y_k^r &:= 
	(Y_{k\delta+t}^{r(m+1)}-Y_{k\delta}^{r(m+1)},\cdots,
	Y_{k\delta+t}^{r(m+1)+m}-Y_{k\delta}^{r(m+1)+m}
	)_{t\in[0,\delta]}.
\end{align*}
Then evidently $Z$ is translation-invariant and $\tilde X_k^r$ is 
finite-valued, but $\tilde Y_k^r$ takes values in the space
$C_0([0,\delta];\mathbb{R}^{m+1})$ of continuous paths 
$\omega:[0,\delta]\to\mathbb{R}^{m+1}$ with $\omega(0)=0$ 
(which is Polish when endowed
with the topology of uniform convergence and the associated Borel
$\sigma$-field).  Thus we cannot directly apply Lemma \ref{lem:conze}.

To surmount this problem, we employ a straightforward discretization 
procedure.  Let $\{A_j\}_{j\ge 1}$ be a countable generating class for
the Borel $\sigma$-field $\mathcal{F}$ of 
$C_0([0,\delta];\mathbb{R}^{m+1})$, and define 
the functions $\kappa_j:C_0([0,\delta];\mathbb{R}^{m+1})\to
\{0,1\}^j$ as $\kappa_j:=(\mathbf{1}_{A_1},\ldots,\mathbf{1}_{A_j})$.
Then $\mathcal{F}_j:=\sigma\{\kappa_j\}$ is an increasing family of 
$\sigma$-fields such that $\bigvee_j\mathcal{F}_j=\mathcal{F}$.
Now define $\tilde Y_k^r(j) := \kappa_j(\tilde Y_k^r)$.  Then the random 
field $(\tilde X_k^r,\tilde Y_k^r(j))_{k,r\in\mathbb{Z}}$ is 
translation-invariant and finite-valued for every $j\ge 1$.  Thus we can 
apply Lemma \ref{lem:conze} to obtain
\begin{align*}
	& H(\tilde Y_0^0(j)|\tilde Y_0^{<0}(j),\tilde Y_{<0}(j)) -
	H(\tilde Y_0^0(j)|\tilde Y_0^{<0}(j),\tilde Y_{<0}(j);
	\tilde X_{-n}^{<0},\tilde X_{<-n}) = \\
	&\qquad\qquad H(\tilde X_0^0|\tilde X_0^{<0},\tilde X_{<0};
	\tilde Y_{n}^0(j),\tilde Y_{n}^{<0}(j),\tilde Y_{<n}(j))-
	H(\tilde X_0^0|\tilde X_0^{<0},\tilde X_{<0};\tilde Y(j)).
\end{align*}
In particular, as the left-hand side of this expression is an expected 
relative entropy (see the proof of Proposition \ref{prop:babyobs}), we 
can estimate
\begin{align*}
	& H(\tilde Y_0^0(i)|\tilde Y_0^{<0}(j),\tilde Y_{<0}(j)) -
	H(\tilde Y_0^0(i)|\tilde Y_0^{<0}(j),\tilde Y_{<0}(j);
	\tilde X_{-n}^{<0},\tilde X_{<-n}) \\
	&\qquad\qquad \mbox{}\le
	H(\tilde X_0^0|\tilde X_0^{<0},\tilde X_{<0};
	\tilde Y_{n}^0(j),\tilde Y_{n}^{<0}(j),\tilde Y_{<n}(j))-
	H(\tilde X_0^0|\tilde X_0^{<0},\tilde X_{<0};\tilde Y(j))
\end{align*}
for every $i\le j$.  Letting $j\to\infty$ and using that conditioning
reduces entropy gives
\begin{align*}
	& \sup_{i\ge 1}\,\{H(\tilde Y_0^0(i)|\tilde Y_0^{<0},\tilde Y_{<0}) 
	-
	H(\tilde Y_0^0(i)|\tilde Y_0^{<0},\tilde Y_{<0};
	\tilde X_{<-n})\} \\
	&\qquad\qquad \mbox{}\le
	H(\tilde X_0^0|\tilde X_0^{<0},\tilde X_{<0};
	\tilde Y_{n}^0,\tilde Y_{n}^{<0},\tilde Y_{<n})-
	H(\tilde X_0^0|\tilde X_0^{<0},\tilde X_{<0};\tilde Y).
\end{align*}
To proceed, we write the left hand side as an expected relative entropy 
as in the proof of Proposition \ref{prop:babyobs}. Using the continuity of 
the relative entropy in information (e.g., \cite[Lemma 4.4.15]{DS89}) and 
monotone convergence, this yields
\begin{align*}
	& 
	\mathbf{E}[\,D(\,
	\mathbf{P}[\tilde Y_0^0\in\cdot\,|
	\tilde Y_0^{<0},\tilde Y_{<0},\tilde X_{<-n}]
	\,||\,
	\mathbf{P}[\tilde Y_0^0\in\cdot\,|
	\tilde Y_0^{<0},\tilde Y_{<0}]
	\,)\,] \\
	&\qquad\qquad \mbox{}\le
	H(\tilde X_0^0|\tilde X_0^{<0},\tilde X_{<0};
	\tilde Y_{n}^0,\tilde Y_{n}^{<0},\tilde Y_{<n})-
	H(\tilde X_0^0|\tilde X_0^{<0},\tilde X_{<0};\tilde Y).
\end{align*}
It therefore follows immediately that
$$
	\mathbf{E}[\,D(\,
	\mathbf{P}[\tilde Y_0^0\in\cdot\,|
	\tilde Y_0^{<0},\tilde Y_{<0},\tilde X_{<-n}]
	\,||\,
	\mathbf{P}[\tilde Y_0^0\in\cdot\,|
	\tilde Y_0^{<0},\tilde Y_{<0}]
	\,)\,] \xrightarrow{n\to\infty}0.
$$
It remains to note that
$$
	\mathbf{P}[\tilde Y_0^0\in\cdot\,|
        \tilde Y_0^{<0},\tilde Y_{<0}] =
	\rho,\qquad
	\mathbf{P}[\tilde Y_0^0\in\cdot\,|
        \tilde Y_0^{<0},\tilde Y_{<0},\tilde X_{<-n}] =
	\rho_{-(n+1)\delta},
$$
where we have used the Markov property of $(X_t,Y_t)_{t\ge 0}$ to 
obtain the latter equality.  This establishes that
$\mathbf{E}[D(\rho_s||\rho)]\to 0$ along the subsequence
$s=-(n+1)\delta$, and therefore as $s\to-\infty$ as 
$\mathbf{E}[D(\rho_s||\rho)]$ is decreasing in $s$ (as is easily
verified by Jensen's inequality).
\end{proof}

We can now complete the proof of Theorem \ref{thm:conttime}.

\begin{proof}[Proof of Theorem \ref{thm:conttime}]
By translation-invariance and Lemma \ref{lem:finapprox}, it suffices
to show
$$
	|\mathbf{P}[(X_0^0,\ldots,X_0^m)\in A|X_{-t},\{Y_s\}_{-t\le s\le 0}] -
	 \mathbf{P}[(X_0^0,\ldots,X_0^m)\in A|\{Y_s\}_{-t\le s\le 0}]|
	\xrightarrow{t\to\infty}0\mbox{ in }L^1
$$
for every set $A$ and $m\ge 1$.  But by the martingale convergence 
theorem, and as $\{X_{s},Y_{s}\}_{s\le -t}$ is conditionally independent
of $\{X_s,Y_s\}_{s\ge -t}$ given $X_{-t},Y_{-t}$ by the Markov property,
it suffices to show for every set $A$ and $m\ge 1$ that
$$
	|\mathbf{P}[(X_0^0,\ldots,X_0^m)\in 
	A|Y_{\le 0},X_{\le -t}] -
	 \mathbf{P}[(X_0^0,\ldots,X_0^m)\in A|Y_{\le 0}]|
	\xrightarrow{t\to\infty}0\mbox{ in }L^1.
$$
By the martingale convergence theorem, this can be formulated equivalently
as
$$
	\mathbf{P}[(X_0^0,\ldots,X_0^m)\in 
	A|\textstyle{\bigcap}_t\sigma\{Y_{\le 0},X_{\le -t}\}] =
	\mathbf{P}[(X_0^0,\ldots,X_0^m)\in A|Y_{\le 0}].
$$
The key distinction between the continuous- and discrete-time settings
is the following.

\begin{lem}
\label{lem:blum}
We have
$$
	\bigcap_{\delta>0}
	\sigma\{Y_{\le 0},
	\{Y_s^{<0}\}_{s\in[0,\delta]},X_{\le -t}\}=
	\sigma\{Y_{\le 0},X_{\le -t}\}
	\quad\mathrm{mod}\,\mathbf{P}
$$
and
$$
	\bigcap_{\delta>0}\sigma\{Y_{\le 
	0},\{Y_s^{<0}\}_{s\in[0,\delta]}\}=
	\sigma\{Y_{\le 0}\}\quad\mathrm{mod}\,\mathbf{P}.
$$
\end{lem}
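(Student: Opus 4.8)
The plan is to reduce both identities to a single statement — right-continuity at the fixed time $0$ of the filtration generated by the observations (augmented, in the first identity, by $X_{\le -t}$) — and then to establish that statement by a Blumenthal-type zero-one law that exploits the Brownian structure of the observation noise. Both identities have the form $\bigcap_{\delta>0}(\mathcal N\vee\mathcal C_\delta)=\mathcal N$ $\mathrm{mod}\,\mathbf P$, where $\mathcal C_\delta:=\sigma\{Y_s^{<0}:s\in[0,\delta]\}$ and $\mathcal N$ is $\sigma\{Y_{\le 0}\}$ or $\sigma\{Y_{\le 0},X_{\le -t}\}$. Since $\mathcal N\subseteq\mathcal N\vee\mathcal C_\delta\subseteq\mathcal F_\delta$, where $\mathcal F_\delta:=\sigma\{Y_{\le\delta}\}$ (respectively $\sigma\{Y_{\le\delta},X_{\le -t}\}$) is the \emph{full}-site observation field up to time $\delta$, the inclusion $\mathcal N\subseteq\bigcap_\delta(\mathcal N\vee\mathcal C_\delta)\subseteq\bigcap_\delta\mathcal F_\delta$ shows that it suffices to prove the cleaner right-continuity statement $\bigcap_{\delta>0}\mathcal F_\delta=\mathcal N$ $\mathrm{mod}\,\mathbf P$; the somewhat artificial restriction to sites $v<0$ in the current time step then comes for free.

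Next I would set up the reverse-martingale machinery. As $\mathcal F_\delta\downarrow\bigcap_\delta\mathcal F_\delta$ and $\mathcal N\subseteq\mathcal F_\delta$, it suffices to show $\mathbf E[h\mid\mathcal F_\delta]\to\mathbf E[h\mid\mathcal N]$ in $L^1$ as $\delta\downarrow 0$ for every bounded $\mathcal F_{\delta_0}$-measurable $h$ and some fixed $\delta_0$; by a monotone-class argument one reduces to $h=g\,\phi$ with $g$ bounded $\mathcal N$-measurable (which factors out) and $\phi=\phi(\{Y_s^v:s\in[0,\delta_0],\,v\in S\})$ a bounded cylinder function on a \emph{finite} site set $S$. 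The crucial simplification is then to remove the observation noise from the test function. Writing $\mathcal X:=\sigma\{X_s:s\in\mathbb R\}$ for the full signal field, the additive white-noise structure gives that, conditionally on $\mathcal X$, the increments of $Y$ over $(0,\delta_0]$ are independent of $\mathcal N$ (disjoint Brownian increments once the drift is fixed); hence $\mathbf E[\phi\mid\mathcal X\vee\mathcal N]=\mathbf E[\phi\mid\mathcal X]=:\psi$, and integrating out the Gaussian noise shows $\psi=\psi(X^S|_{[0,\delta_0]})$ is a finite-site function \emph{of the signal alone}. The tower property yields $\mathbf E[\phi\mid\mathcal N]=\mathbf E[\psi\mid\mathcal N]$, while $\mathbf E[\phi\mid\mathcal F_\delta\vee\mathcal X]=\mathbf E[\phi\mid\mathcal X\vee\sigma\{Y_{(0,\delta]}\}]\to\psi$ by the Blumenthal $0$–$1$ law applied to the conditionally-Brownian increments given $\mathcal X$; an $L^1$-contraction estimate then gives $\|\mathbf E[\phi\mid\mathcal F_\delta]-\mathbf E[\psi\mid\mathcal F_\delta]\|_{L^1}\to 0$. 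Thus everything reduces to proving, for bounded finite-site \emph{signal} functions $\psi$, that
$$
	\mathbf E[\psi\mid\mathcal F_\delta]\xrightarrow{\ \delta\downarrow 0\ }\mathbf E[\psi\mid\mathcal N]\quad\text{in }L^1 .
$$
Note that in this reduction the infinitely many sites were handled harmlessly, because one was allowed to condition on all of $\mathcal X$.

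The remaining signal statement is the genuine Blumenthal $0$–$1$ law: the germ of the observation at $0^+$ reveals nothing about the signal beyond the past $\mathcal N$. Here I would pass to a reference measure $\mathbf Q$, equivalent to $\mathbf P$ by a finite-site Girsanov transform that strips the drift from the $S$-observations on $[0,\delta_0]$, turning them into Brownian motions independent of everything else; for a driftless, independent-increment process the germ field is trivial and independent of the past, so the interchange $\bigcap_\delta(\mathcal A\vee\mathcal B_\delta)=\mathcal A$ holds on the nose, and equivalence of measures transfers the resulting $\sigma$-field identity back to $\mathbf P$ (both being statements $\mathrm{mod}$ the common null sets). \textbf{The hard part} is precisely that the conditioning field $\mathcal F_\delta$ contains the observation increments of \emph{all} (infinitely many) sites, and one cannot remove the drift globally: the infinite product of Girsanov densities diverges. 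This is not a mere technical nuisance — spatial averaging over infinitely many sites can extract signal information from the window even as $\delta\downarrow 0$ (in a fully spatially-correlated signal, $N^{-1}\sum_{v}Y^v_s$ recovers the drift $\int_0^s\!X\,du$ by a law of large numbers, hence the value of the signal at $0^+$). The point of the lemma is that, by \emph{path-continuity} of the observation process at the fixed time $0$, any such limiting information is almost surely already contained in the left history $\mathcal N$; making this rigorous is the heart of the argument and is exactly the ingredient unavailable in discrete time (where each step carries a fixed, non-vanishing amount of information), which is why the discrete-time Theorem \ref{thm:iobs} is confined to the intermediate filter while the continuous-time model admits the full conclusion of Theorem \ref{thm:conttime}.
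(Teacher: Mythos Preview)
Your reduction to the cleaner statement $\bigcap_{\delta>0}\mathcal F_\delta=\mathcal N$ $\mathrm{mod}\,\mathbf P$ is correct and matches the paper. However, your proposal has a genuine gap at exactly the point you flag as ``the hard part'': you correctly observe that a global Girsanov transform is unavailable because infinitely many sites are present in $\mathcal F_\delta$, and you then assert that path-continuity of $Y$ at $0$ should save the day, but you do not explain \emph{how}. The sentence ``making this rigorous is the heart of the argument'' is an admission that the proof is not complete. Your intermediate reduction to finite-site signal functions $\psi$ is fine, but you still need $\mathbf E[\psi\mid\mathcal F_\delta]\to\mathbf E[\psi\mid\mathcal N]$, and nothing you have written establishes this: the conditioning field $\mathcal F_\delta$ still contains infinitely many observation coordinates, and your law-of-large-numbers example shows precisely why a naive germ argument fails. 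Path-continuity of $Y$ alone is not enough, because $\mathcal F_\delta$ is generated by the full paths on $(-\infty,\delta]$, not just by the value $Y_\delta$.

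The paper resolves this by an entirely different and much shorter device that you do not mention: \emph{stationarity in time}. For test variables of the form $Z_s=f(X_{s+t_1},Y_{s+t_1},\ldots,X_{s+t_n},Y_{s+t_n})$ with $f$ bounded and local, stationarity gives
\[
\mathbf E\bigl[\mathbf E[Z_s\mid Y_{\le\delta},X_{\le -t}]^2\bigr]
=\mathbf E\bigl[\mathbf E[Z_{s-\delta}\mid Y_{\le 0},X_{\le -t-\delta}]^2\bigr],
\]
which converts the problem of the \emph{future} germ at $0^+$ into a \emph{left}-limit problem where the conditioning field is fixed at $Y_{\le 0}$ and only the test variable and the signal-history cutoff move. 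The Feller assumption on $X$ then yields quasi-left continuity, so $X_{u-\delta}\to X_u$ a.s., hence $Z_{s-\delta}\to Z_s$ a.s.; Hunt's lemma handles the simultaneous convergence of the integrand and the decreasing filtration $\sigma\{X_{\le -t-\delta}\}$, and quasi-left continuity again gives $\sigma\{X_{<-t}\}=\sigma\{X_{\le -t}\}$ $\mathrm{mod}\,\mathbf P$. This sidesteps the infinite-site obstruction completely: no Girsanov, no Blumenthal law, and no need to argue about what spatial averages over $(0,\delta]$ can or cannot reveal. The Feller hypothesis on $X$---which your proposal never invokes---is used essentially here.
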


\begin{proof}
Let us prove the first statement (the second statement follows readily in 
the same manner).  It evidently suffices to prove the stronger identity
$$
	\bigcap_{\delta>0}
	\sigma\{Y_{\le \delta},X_{\le -t}\}=
	\sigma\{Y_{\le 0},X_{\le -t}\}
	\quad\mathrm{mod}\,\mathbf{P}.
$$
To this end, it suffices to show that
$$
	\mathbf{E}[Z|Y_{\le \delta},X_{\le -t}]
	\xrightarrow{\delta\downarrow 0}
	\mathbf{E}[Z|Y_{\le 0},X_{\le -t}]\quad\mbox{in }L^2
$$
for every bounded random variable $Z$.  By a standard approximation 
argument, it suffices to consider $Z$ of the form
$Z_s=f(X_{s+t_1},Y_{s+t_1},\ldots,X_{s+t_n},Y_{s+t_n})$ for 
$n\in\mathbb{N}$, $t_1,\ldots,t_n\in\mathbb{R}$, and $f$ bounded and 
local.  Now note that by stationarity,
$$
	\mathbf{E}[
	\mathbf{E}[Z_s
	|Y_{\le \delta},X_{\le -t}]^2] =
	\mathbf{E}[
	\mathbf{E}[
	Z_{s-\delta}|
	Y_{\le 0},X_{\le -t-\delta}]^2].
$$
As $X$ is Feller, it is quasi-left continuous \cite[p.\ 101]{RY99},
and therefore $X_{t-\delta}\to X_t$ a.s.\ as $\delta\downarrow 0$.
In particular, as $f$ is local and $Y$ is continuous by construction, we 
have $Z_{s-\delta}\to Z_s$ a.s.\ as $\delta\downarrow 0$.
On the other hand, as $f$ is bounded, we obtain
$$
	\mathbf{E}[
	\mathbf{E}[Z_s|Y_{\le \delta},X_{\le -t}]^2]
	\xrightarrow{\delta\downarrow 0}
	\mathbf{E}[
	\mathbf{E}[Z_s|
	Y_{\le 0},X_{<-t}]^2] =
	\mathbf{E}[
	\mathbf{E}[Z_s|
	Y_{\le 0},X_{\le -t}]^2],
$$
using Hunt's lemma \cite[Corollary II.2.4]{RY99},
where the last equality follows again by quasi-left continuity.
Thus $\mathbf{E}[Z_s|Y_{\le \delta},X_{\le -t}]\to
\mathbf{E}[Z_s|Y_{\le 0},X_{\le -t}]$ in $L^2$, completing the proof.
\end{proof}

By virtue of Lemma \ref{lem:blum}, to complete the proof of Theorem 
\ref{thm:conttime}, it suffices to show
\begin{multline*}
	\mathbf{P}[(X_0^0,\ldots,X_0^m)\in 
	A|\textstyle{\bigcap}_t\sigma\{Y_{\le 0},\{Y_s^{<0}\}_{s\in[0,\delta]},
	X_{\le -t}\}] = \\
	\mathbf{P}[(X_0^0,\ldots,X_0^m)\in A|Y_{\le 0},
	\{Y_s^{<0}\}_{s\in[0,\delta]}]
\end{multline*}
for every set $A$, $m\ge 1$, and $\delta>0$: indeed, letting
$\delta\downarrow 0$ the yields the expression before the
statement of Lemma \ref{lem:blum}.  We will deduce this fact from
Proposition \ref{prop:contpinsker}.  To this end we require 
a lemma that replaces the analogous argument in 
Proposition \ref{prop:babyobs}.

\begin{lem}
\label{lem:wiener}
Let $f:\mathbb{R}^{m+1}\to\mathbb{R}$ be a bounded continuous function.
Then there exists a sequence of bounded continuous functions
$g_n:\mathbb{R}^{m+1}\to\mathbb{R}$ such that
$$
	|\mathbf{E}[g_n(nY_{1/n}^0,\ldots,nY_{1/n}^m)|X]
	-f(X_0^0,\ldots,X_0^m)|\xrightarrow{n\to\infty}0
	\quad\mbox{in }L^1.
$$
\end{lem}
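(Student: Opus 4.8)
The plan is to establish the continuous-time analogue of the invertibility step in the proof of Proposition \ref{prop:babyobs}. There the decisive fact was that \emph{every} function of the hidden state can be written as the conditional expectation, given the state, of a function of the observation, because the observation operators $T_i$ are invertible when $p\ne\tfrac12$. Here the corresponding statement is that the short-time white-noise channel is invertible in a limiting sense, and $g_n$ will be constructed to realize this inversion. First I would record the conditional law of the rescaled observation: writing $Y_{1/n}^v=\int_0^{1/n}X_s^v\,ds+\sigma W_{1/n}^v$ with $W$ independent of $X$, conditionally on $X$ the vector $(nY_{1/n}^0,\dots,nY_{1/n}^m)$ is Gaussian with independent coordinates, the $v$-th having mean $\mu_n^v:=n\int_0^{1/n}X_s^v\,ds$ and variance $n\sigma^2$. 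Two elementary facts drive everything: since $X$ is c\`adl\`ag, right-continuity at $0$ gives $\mu_n^v\to X_0^v$ a.s.; and since $X_s^v\in\{-1,1\}$ we have $|\mu_n^v|\le 1$ for every $n$. Conditional independence of the coordinates given $X$ comes from the independence of the driving Brownian motions.

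The engine of the proof is the Gaussian moment generating function. For any $c\in\mathbb{R}$,
$$
\mathbf{E}\big[e^{c\,nY_{1/n}^v-\tfrac12 c^2 n\sigma^2}\,\big|\,X\big]=e^{c\mu_n^v}\xrightarrow{n\to\infty}e^{cX_0^v}\quad\text{a.s.}
$$
so the normalized exponential $y\mapsto e^{cy-\frac12 c^2 n\sigma^2}$ recovers, in the limit, the function $X_0^v\mapsto e^{cX_0^v}$. Taking products over $v=0,\dots,m$ and using conditional independence, products of such exponentials recover $\prod_v e^{c_v X_0^v}$. Since $(X_0^0,\dots,X_0^m)$ takes values in the finite set $\{-1,1\}^{m+1}$, only the restriction of $f$ there matters, and this restriction can be matched by a finite linear combination of such products: expanding $f$ on $\{-1,1\}^{m+1}$ in the Fourier--Walsh basis and using the identity $s=\tfrac{1}{2\sinh 1}(e^{s}-e^{-s})$ valid for $s\in\{-1,1\}$ to rewrite each monomial $\prod_{v\in S}s_v$ as a sum of products of single-variable exponentials, one obtains coefficients $\lambda_j$ and exponents $c_{j,v}\in\{-1,0,1\}$ with $f(s)=\sum_j\lambda_j\prod_v e^{c_{j,v}s_v}$ on $\{-1,1\}^{m+1}$. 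I would then \emph{define} $g_n$ as the matching combination of products of the (truncated) normalized exponentials above.

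The one structural nuisance is that $g_n$ must be bounded, whereas $e^{cy}$ is unbounded and the variance $n\sigma^2$ diverges. I would cap each factor at a level $B_n\to\infty$ (for instance $B_n=e^{c^2 n\sigma^2}$), which preserves continuity; a completion-of-the-square computation shows that the capped-off contribution to the conditional expectation equals $e^{c\mu_n^v}\,\mathbf{P}[\widetilde Z>z_n]$ with $\widetilde Z\sim N(\mu_n^v+c n\sigma^2,\,n\sigma^2)$ and threshold $z_n\sim\tfrac32 c n\sigma^2$, so that $z_n-(\mu_n^v+cn\sigma^2)\sim\tfrac12 c n\sigma^2$ exceeds the standard deviation $\sqrt{n}\,\sigma$ by a factor tending to infinity and the tail probability vanishes (the case $c<0$ is symmetric, and $c=0$ is trivial). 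Hence each factor's conditional expectation still converges to $e^{c_{j,v}X_0^v}$ a.s. Finally, because $|\mu_n^v|\le 1$, each factor is bounded by $e^{|c_{j,v}|}$, so $\mathbf{E}[g_n(nY_{1/n}^0,\dots,nY_{1/n}^m)\mid X]$ is dominated by the deterministic constant $\sum_j|\lambda_j|\prod_v e^{|c_{j,v}|}$ uniformly in $n$; dominated convergence then upgrades the a.s.\ convergence to the $L^1$ convergence claimed.

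The conceptual content lies entirely in the exponential-inversion step, which is exactly the continuous-time shadow of the invertibility of the observation operator in Proposition \ref{prop:babyobs}. The only genuine work is the bookkeeping of the third paragraph, namely reconciling the requirement that $g_n$ be bounded with the blow-up of the Gaussian variance. I expect this to be the main obstacle, but it is disarmed by two observations: the deterministic normalization $e^{-\frac12 c^2 n\sigma^2}$ tames the variance, and the bound $|\mu_n^v|\le 1$ (a consequence of $X$ being $\{-1,1\}$-valued) keeps the conditional expectations uniformly bounded. Everything else is routine.
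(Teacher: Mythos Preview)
Your proof is correct, and it takes a genuinely different route from the paper's.

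The paper's argument is non-constructive and functional-analytic. It observes that conditioning on $X$ turns the expectation into a Gaussian convolution: $\mathbf{E}[g(nY_{1/n}^0,\ldots,nY_{1/n}^m)\mid X]=(g*\xi_n)(\mu_n)$ with $\mu_n=(n\int_0^{1/n}X_s^0\,ds,\ldots,n\int_0^{1/n}X_s^m\,ds)\in C:=[-1,1]^{m+1}$. A Hahn--Banach argument (if a signed measure on $C$ annihilates every $g*\xi_n$, its Fourier transform vanishes since the Gaussian characteristic function never does) shows that for \emph{each fixed} $n$ the restrictions $\{(g*\xi_n)|_C:g\in\mathcal C_b(\mathbb{R}^{m+1})\}$ are uniformly dense in $C(C)$. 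One then simply picks $g_n$ with $\sup_{x\in C}|(g_n*\xi_n)(x)-f(x)|\le 1/n$ and finishes using right-continuity of $X$ to get $\mu_n\to(X_0^0,\ldots,X_0^m)$. No truncation or MGF computation is needed, but the choice of $g_n$ is inexplicit.

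Your construction, by contrast, is explicit and exploits more structure: that $X_0$ takes values in the finite set $\{-1,1\}^{m+1}$, so $f$ admits a finite Walsh expansion which you rewrite via $s=(e^s-e^{-s})/(2\sinh 1)$ as a sum of exponential products, each of which is recovered exactly by the (normalized) Gaussian MGF. The cost is the truncation bookkeeping, which you handle correctly via the tilted-measure tail bound. Your approach buys concreteness and avoids the external density reference; the paper's approach buys brevity and would work verbatim if $X_0$ took values in any compact set rather than a finite one.
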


\begin{proof}
By the definition of the observations,
$$
	\mathbf{E}[g(nY_{1/n}^0,\ldots,nY_{1/n}^m)|X] =
	\textstyle{(g*\xi_{n})(n\int_0^{1/n}X_s^0\,ds,\ldots,
	n\int_0^{1/n}X_s^m\,ds)},
$$
where $\xi_{n}$ denotes the centered Gaussian measure on
$\mathbb{R}^{m+1}$ with covariance $n\,\mathrm{Id}$ and $*$ denotes
convolution.  Note the trivial estimate $|\int_0^tX_s^vds|\le t$, so the 
argument of the function $g*\xi_{n}$ above takes values in the compact
set $C=[-1,1]^{m+1}$.

We now recall that as $C$ is compact, every continuous function on $C$ 
is contained in the closure of 
$\{(g*\xi_{n})|_{C} :
g\in \mathcal{C}_b(\mathbb{R}^{m+1})\}$ with respect to the uniform
convergence topology on $C$ (here
$\mathcal{C}_b(\mathbb{R}^{n+1})$ is the family of bounded continuous 
functions on $\mathbb{R}^{n+1}$).  This follows from an elementary 
Hahn-Banach argument, cf.\ \cite[Remark 5]{vH09}.  We can therefore
choose, for each $n$, a bounded continuous function $g_n$ such that
$$
	|(g_n*\xi_{n})(x)-f(x)|\le
	1/n\quad\mbox{for all }x\in C.
$$
Then we evidently have
\begin{multline*}
	|\mathbf{E}[g_n(nY_{1/n}^0,\ldots,nY_{1/n}^m)|X]
	-f(X_0^0,\ldots,X_0^m)| \\ \mbox{}\le 
	1/n + 
	|\textstyle{f(n\int_0^{1/n}X_s^0\,ds,\ldots,
        n\int_0^{1/n}X_s^m\,ds)}-f(X_0^0,\ldots,X_0^m)|.
\end{multline*}
As $f$ is bounded and continuous, and as the paths of $X$ are 
right-continuous, this expression
converges to zero as $n\to\infty$ a.s.\ and in $L^1$.
\end{proof}

Note that, by the definition of our model, $Y_{\le 
0},\{Y_s^{<0}\}_{s\in[0,\delta]}$ is conditionally independent
of $\{Y_s^{0},\ldots,Y_s^m\}_{s\in[0,\delta]}$ given $X$, so that
for every bounded continuous function $h$
$$
	\mathbf{E}[h(Y_{1/n}^0,\ldots,Y_{1/n}^m)|
	Y_{\le 0},\{Y_s^{<0}\}_{s\in[0,\delta]},X] =
	\mathbf{E}[h(Y_{1/n}^0,\ldots,Y_{1/n}^m)|X].
$$
In particular, in view of Lemma \ref{lem:wiener}, it now suffices
to show that
$$
	\mathbf{E}[h(Y_s^0,\ldots,Y_s^m)|
	\textstyle{\bigcap}_t\sigma\{Y_{\le 0},\{Y_s^{<0}\}_{s\in[0,\delta]},
	X_{\le -t}\}] = 
	\mathbf{E}[h(Y_s^0,\ldots,Y_s^m)|Y_{\le 0},
	\{Y_s^{<0}\}_{s\in[0,\delta]}]
$$
for every bounded continuous function $h$ and $s\in[0,\delta]$.
But this follows readily from Proposition \ref{prop:contpinsker} using
Pinsker's inequality and martingale convergence.
\end{proof}

\section{Conditional random fields}
\label{sec:condrf}

Thus far we have considered infinite-dimensional counterparts of classical 
stability problems in nonlinear filtering.  However, new questions arise 
in infinite dimension beyond stability that are of interest in their own 
right.  In particular, it is of 
significant interest (cf.\ \cite{RvH13}) to understand the spatial mixing 
and decay of correlations properties of conditional distributions in 
infinite dimension, which could be viewed as spatial counterparts to the 
filter stability property.  Such questions already arise in the absence of 
dynamics, and thus we proceed in this section to introduce such problems 
in the most basic setting of conditional random fields (that is, in models 
with only spatial degrees of freedom).  Our motivations 
for such questions are threefold:
\begin{enumerate}
\item Random fields provide the simplest possible setting to investigate 
the spatial mixing properties of conditional distributions.
\item Conditional random fields are of practical interest in their own 
right, for example, in Bayesian image analysis applications 
\cite{Win03,FMS97}.
\item Even in the more classical setting of the previous sections, the 
random field viewpoint proves to be fundamental to the understanding of 
filter stability in infinite dimension: indeed, the proofs 
in both sections \ref{sec:phtrans} and \ref{sec:obs} above and in
\cite{RvH13,RvH13b} exploit the idea that 
$(X_k^v,Y_k^v)_{k\in\mathbb{Z},v\in\mathbb{Z}^d}$ can be viewed as a 
space-time random field.
\end{enumerate}
The remainder of this section is organized as follows.  In section 
\ref{sec:mrf}, we recall some basic notions from the theory of Markov 
random fields.  In section \ref{sec:crf}, we develop basic properties of 
conditional random fields and introduce some of the relevant questions.
Finally, in section \ref{sec:mono}, we develop a general result that 
ensures the inheritance of ergodicity under conditioning in random fields 
that possess certain monotonicity properties.  The latter provides a 
mechanism for the resolution of the random field counterpart of Conjecture 
\ref{conj:obs} that is quite distinct from the observability theory of 
section \ref{sec:obs}.

\subsection{Markov random fields}
\label{sec:mrf}

A random field is a collection of random variables $X_v$ that are 
indexed by the spatial degree of freedom $v$. For simplicity, we will 
assume in the sequel that $v\in\mathbb{Z}^d$ (but see section 
\ref{sec:graphs} below) and that each $X_v$ takes values in a finite set 
$E$.

In the following, we define for any $V\subseteq\mathbb{Z}^d$
$$
	V^c:=\mathbb{Z}^d\backslash V,
	\qquad
	\partial V:=\{w\in V^c:\|v-w\|=1\mbox{ for some }v\in V\},
	\qquad
	X_V:=(X_v)_{v\in V}.
$$
If $V$ is a finite subset of $\mathbb{Z}^d$, we will write 
$V\subset\subset\mathbb{Z}^d$.  We now recall a basic definition.

\begin{defn}
\label{defn:mrf}
$X=(X_v)_{v\in\mathbb{Z}^d}$ is called a \emph{Markov random field} if it 
possesses the (local) Markov property, that is, 
$\mathbf{P}[X_V\in\cdot|X_{V^c}]$ depends only on $X_{\partial 
V}$ for every $V\subset\subset\mathbb{Z}^d$.
\end{defn}

Just as Markov chains are defined by transition probabilities, Markov 
random fields are defined by a family of local transition kernels called a 
\emph{specification} \cite[Chapter 1]{Geo11}.

\begin{defn}
\label{defn:spec}
A family $\gamma=(\gamma_V)_{V\subset\subset\mathbb{Z}^d}$ of
transition kernels on $E^{\mathbb{Z}^d}$ such that
\begin{enumerate}
\item $\gamma_V(x,A)$ is a function of $x_{\partial V}$ for every
$A\in\sigma\{X_{V}\}$ and $V\subset\subset\mathbb{Z}^d$,
\item $\gamma_V(x,A)=\mathbf{1}_A(x)$ for every
$A\in\sigma\{X_{V^c}\}$ and $V\subset\subset
\mathbb{Z}^d$,
\item $\gamma_V\gamma_W=\gamma_V$ for every $W\subset 
V\subset\subset\mathbb{Z}^d$,
\end{enumerate}
is called a \emph{specification}.  A Markov random field 
$X$ is said to be \emph{specified} by $\gamma$ if we have
$\mathbf{P}(X\in A|X_{V^c})=\gamma_V(X,A)$ for every measurable set $A$ 
and $V\subset\subset\mathbb{Z}^d$.  The family of all laws of Markov 
random fields specified by $\gamma$ is denoted $\mathscr{G}(\gamma)$.
\end{defn}

\begin{ex}
\label{ex:ising}
Standard constructions of Markov random fields arise in statistical 
mechanics in the following manner.  Let $\psi_v:E\to\mathbb{R}$
and $\varphi_{\{v,w\}}:E\times E\to\mathbb{R}$ for $v,w\in\mathbb{Z}^d$ 
with $\|v-w\|=1$ be given potential functions, and let
$$
	\gamma_V(x,A) = 
	\frac{1}{Z}\sum_{x_V\in E^V}\mathbf{1}_A(x)\, \exp\Bigg(
	\sum_{\{v,w\}\subset V\cup\partial V
	:\|v-w\|=1}\varphi_{\{v,w\}}(x_v,x_w) +
	\sum_{v\in V}\psi_v(x_v)
	\Bigg)
$$
where $Z$ is the appropriate normalization factor.  It is easily 
verified that $\gamma=(\gamma_V)_{V\subset\subset\mathbb{Z}^d}$ defines a 
specification.  The potentials $\psi_v$ and 
$\varphi_{\{v,w\}}$ describe the local 
external and interaction forces between different sites, and are defined 
directly in terms of the physical parameters of the problem.  For example, 
if $E=\{-1,1\}$, $\varphi_{\{v,w\}}(\sigma,\sigma')=\beta J\sigma\sigma'$, 
and $\psi_v(\sigma)=\beta\mu\sigma$ with $\beta,J>0$ and $\mu\in\mathbb{R}$, 
this is the well known ferromagnetic Ising model with inverse temperature 
$\beta$, interaction strength $J$ and magnetic field strength $\mu$.
The construction in terms of potentials will be inessential in the sequel, 
however.
\end{ex}

Given a specification $\gamma$, there always exists a random field in 
$\mathscr{G}(\gamma)$ under our assumptions.  However, just as a Markov 
chain with given transition probabilities may admit more than one 
stationary distribution, the random field associated to a given 
specification need not be unique.  In fact, the structure of the set 
$\mathscr{G}(\gamma)$ is closely related to the spatial mixing properties 
of the associated random fields, as is shown by the following result 
\cite[section 4.4, Proposition 7.11, Theorem 7.7]{Geo11}.  To interpret 
the notion of extremality that arises here, note that if $\mathbf{P}$ and 
$\mathbf{Q}$ are the laws of two random fields in $\mathscr{G}(\gamma)$, 
then $\lambda\mathbf{P}+(1-\lambda)\mathbf{Q}$ is also in 
$\mathscr{G}(\gamma)$ for $0\le\lambda\le 1$ \cite[Chapter 7]{Geo11}; thus 
$\mathscr{G}(\gamma)$ is a convex set, and a random field is called 
\emph{extremal} if it is an extreme point of this set.

\begin{thm}
\label{thm:gibbs}
For a given specification $\gamma$, the following hold.
\begin{enumerate}
\item Existence of a random field: $\mathscr{G}(\gamma)\ne\varnothing$.
\item Uniqueness $\Leftrightarrow$ uniform mixing: 
$|\mathscr{G}(\gamma)|=1$ iff a random field in $\mathscr{G}(\gamma)$
satisfies\footnote{
	Here we used the suggestive notation
	$\mathbf{P}[X\in C|X_{W^c}=x_{W^c}]:=\gamma_W(x,C)$ to emphasize
	the significance of the mixing property.  Note that
	$\mathbf{P}[X\in C|X_{W^c}]=\gamma_W(X,C)$ holds a.s.\ by the
	definition of $\mathscr{G}(\gamma)$, but the equivalence between
	uniqueness and uniform mixing is false if a null set is omitted in 
	the supremum over $x$.
}$^,$\footnote{
	The notation $\lim_{W}a_W$ denotes
	the limit of the net $\{a_W\}$,
	where $\{W\subset\subset\mathbb{Z}^d\}$ is directed by inclusion.
}
$$
	\lim_{W\subset\subset\mathbb{Z}^d}
	\sup_x
	|\mathbf{P}[X_V\in A|X_{W^c}=x_{W^c}]-\mathbf{P}[X_V\in A]|=0
$$
for every set $A$ and $V\subset\subset\mathbb{Z}^d$.
\item Extremality $\Leftrightarrow$ mixing: the random field $X$ is an 
extreme point of $\mathscr{G}(\gamma)$ iff
$$
	\lim_{W\subset\subset\mathbb{Z}^d}\mathbf{E}|
	\mathbf{P}[X_V\in A|X_{W^c}]-\mathbf{P}[X_V\in A]|=0
$$
for every set $A$ and $V\subset\subset\mathbb{Z}^d$.
\end{enumerate}
\end{thm}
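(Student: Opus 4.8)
The plan is to treat the three assertions in order, reducing each to a property of the finite-volume kernels $\gamma_V$ together with a soft convergence argument; throughout I use freely that $E$ is finite, so that $E^{\mathbb{Z}^d}$ is compact in the product topology and every local indicator $\mathbf{1}_{\{X_V\in A\}}$ is continuous. For existence, I would fix any probability measure $\nu$ and an increasing exhaustion $V_n\uparrow\mathbb{Z}^d$, and consider $\mu_n:=\nu\gamma_{V_n}$. By compactness $(\mu_n)$ has a weakly convergent subsequence with limit $\mu$. The consistency property $\gamma_{V_n}\gamma_W=\gamma_{V_n}$ for $W\subseteq V_n$ gives $\mu_n\gamma_W=\mu_n$ once $n$ is large; since $\gamma_W$ maps local (continuous) functions to local functions, passing to the weak limit yields $\mu\gamma_W=\mu$ for every $W\subset\subset\mathbb{Z}^d$, i.e.\ $\mu\in\mathscr{G}(\gamma)$.

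For part (ii) the key device is the oscillation $\delta_W(A):=\sup_{x,x'}|\gamma_W(x,A)-\gamma_W(x',A)|$ of a fixed local event $A\in\sigma\{X_V\}$. Writing the consistency relation as $\gamma_{W'}(x,A)=\int\gamma_{W'}(x,dz)\,\gamma_W(z,A)$ for $V\subseteq W\subseteq W'$ exhibits $\gamma_{W'}(x,A)$ as an average of the values $\gamma_W(\cdot,A)$, whence $\delta_{W'}(A)\le\delta_W(A)$: the oscillation is monotone in $W$. Since every $\mathbf{P}\in\mathscr{G}(\gamma)$ satisfies $\mathbf{P}[X_V\in A]=\int\gamma_W(x,A)\,\mathbf{P}(dx)$, the value $\mathbf{P}[X_V\in A]$ is pinned inside the shrinking range of $\gamma_W(\cdot,A)$, so the uniform mixing condition amounts to $\delta_W(A)\to0$. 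If uniform mixing holds, two Gibbs measures must agree on every local event and hence coincide. Conversely, if $\lim_W\delta_W(A)>0$ for some $A$, I would choose boundary configurations $y^{(n)},z^{(n)}$ nearly attaining the running supremum and infimum of $\gamma_{W_n}(\cdot,A)$ and extract weak limits of $\gamma_{W_n}(y^{(n)},\cdot)$ and $\gamma_{W_n}(z^{(n)},\cdot)$; exactly as in the existence argument these limits lie in $\mathscr{G}(\gamma)$, and continuity of $\mathbf{1}_A$ forces them to assign different probabilities to $A$, contradicting uniqueness.

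Part (iii) I would route through the tail $\sigma$-field $\mathcal{T}:=\bigcap_{W\subset\subset\mathbb{Z}^d}\sigma\{X_{W^c}\}$. Since $\{\sigma\{X_{W^c}\}\}$ is a decreasing net with intersection $\mathcal{T}$, the reverse martingale theorem gives $\mathbf{P}[X_V\in A|X_{W^c}]\to\mathbf{P}[X_V\in A|\mathcal{T}]$ in $L^1$; hence the stated mixing property holds for all local $A$ if and only if $\mathbf{P}[X_V\in A|\mathcal{T}]=\mathbf{P}[X_V\in A]$ a.s.\ for all such $A$, which (the local events generating the full $\sigma$-field) is equivalent to $\mathbf{P}$ being trivial on $\mathcal{T}$. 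It then remains to identify extremality with tail-triviality. If $\mathcal{T}$ is nontrivial, pick $T\in\mathcal{T}$ with $0<\mathbf{P}(T)<1$; since $T\in\sigma\{X_{V^c}\}$ for every finite $V$, a direct check of the defining equations shows $\mathbf{P}(\cdot|T)$ and $\mathbf{P}(\cdot|T^c)$ both lie in $\mathscr{G}(\gamma)$, and they are distinct as $T$ has full measure under the first and null measure under the second, exhibiting $\mathbf{P}$ as a nontrivial convex combination. For the reverse implication, the one I expect to be the main obstacle, suppose $\mathbf{P}=\lambda\mathbf{Q}_1+(1-\lambda)\mathbf{Q}_2$ with $\mathbf{Q}_i\in\mathscr{G}(\gamma)$ distinct; then $\mathbf{Q}_1\ll\mathbf{P}$, and the crux is to prove that the density $f=d\mathbf{Q}_1/d\mathbf{P}$ admits a $\mathcal{T}$-measurable version. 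This is where the shared specification is essential: writing $g_\Lambda:=\mathbf{E}_{\mathbf{P}}[f|\sigma\{X_{\Lambda^c}\}]$ and using that $\mathbf{P}$ and $\mathbf{Q}_1$ have the same conditional law $\gamma_\Lambda$ of $X_\Lambda$ given $X_{\Lambda^c}$, one computes $\mathbf{E}_{\mathbf{P}}[g_\Lambda\mathbf{1}_{A\cap B}]=\mathbf{Q}_1(A\cap B)$ for $A\in\sigma\{X_\Lambda\}$, $B\in\sigma\{X_{\Lambda^c}\}$, so that $f=g_\Lambda$ is $\sigma\{X_{\Lambda^c}\}$-measurable for every finite $\Lambda$ and therefore $\mathcal{T}$-measurable. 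Triviality of $\mathcal{T}$ then forces $f$ constant, i.e.\ $\mathbf{Q}_1=\mathbf{P}$, so $\mathbf{P}$ is extremal; combining this with the martingale step completes part (iii). All of this is classical and may alternatively be quoted directly from \cite[\S4.4, Thm.\ 7.7, Prop.\ 7.11]{Geo11}.
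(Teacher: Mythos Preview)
The paper does not give its own proof of this theorem: it is quoted as a standard background result with a citation to \cite[\S4.4, Proposition~7.11, Theorem~7.7]{Geo11}. Your sketch is correct and follows precisely the classical arguments from that reference (compactness for existence, oscillation monotonicity for the uniqueness equivalence, and tail-triviality via reverse martingales for the extremality equivalence), which you yourself cite at the end; there is nothing to compare beyond noting that you have supplied the details the paper chose to omit.
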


The mixing property in Theorem \ref{thm:gibbs} is a direct spatial 
analogue of the stability property of a Markov chain introduced in section 
\ref{sec:filt}.  Indeed, a Markov chain is stable if it forgets its 
initial condition after a long time: that is, the Markov chain has a 
`finite memory.' Similarly, a random field is mixing if the distribution 
of any finite set of sites $V$ is insensitive to knowledge of the 
configuration of the field outside a larger set $W$ when the distance 
between $V$ and $W^c$ is large.  This implies in particular that distant 
sites are nearly independent, that is, the field has `finite correlation 
length.' The \emph{uniform} mixing property is a strictly stronger notion, 
where the forgetting property holds uniformly in the boundary 
configuration $x_{\partial W}$ (recall that by the Markov property of the 
random field, $\mathbf{P}[X\in C|X_{W^c}=x_{W^c}]$ depends on $x_{\partial 
W}$ only).

\subsection{Conditional random fields and conditional mixing}
\label{sec:crf}

In the following, let us fix a specification $\gamma$ and a Markov random 
field $X=(X_v)_{v\in\mathbb{Z}^d}$ that is specified by $\gamma$.  In 
order to investigate the conditional distributions of random fields, we 
must introduce a suitable observation structure.  To this end, in analogy 
with section \ref{sec:ihmm}, let us fix for each $v\in\mathbb{Z}^d$ a 
transition kernel $\Phi_v$ from the state space $E$ of the random field to 
a measurable space $F$ in which the observations take their values.  We 
now construct the observations $Y=(Y_v)_{v\in\mathbb{Z}^d}$ such that
$$
	\mathbf{P}[Y\in dy|X] = \prod_{v\in\mathbb{Z}^d}
	\Phi_v(X_v,dy_v);
$$
that is, each site of the underlying field is observed independently with
$\mathbf{P}[Y_v\in A|X_v]=\Phi_v(X_v,A)$. 
The resulting model $(X_v,Y_v)_{v\in\mathbb{Z}^d}$ is called a 
\emph{hidden Markov random field}. 

\begin{rem}
\label{rem:edge}
For notational simplicity, we have formulated our model such that the 
observations are attached to individual sites $v\in\mathbb{Z}^d$.  One 
could also consider more general models, for example, where an 
observation $Y_{\{v,w\}}$ is attached to every edge 
$\{v,w\}\subset\mathbb{Z}^d$, $\|v-w\|=1$ with $\mathbf{P}[Y_{\{v,w\}}\in 
A|X]=\Phi_{\{v,w\}}(X_v,X_w,A)$ (cf.\ Example \ref{ex:rfbla}).  The
results of this section will continue to hold in this setting with minor 
modifications.
\end{rem}

We can now formulate the natural counterpart of the filter stability 
property in hidden Markov random fields: the model is said to be 
conditionally mixing if the conditional distribution of the underlying 
process in a finite set of sites given the observations is insensitive to 
knowledge of the configuration of the field at distant sites.

\begin{defn}
The hidden Markov random field $(X_v,Y_v)_{v\in\mathbb{Z}^d}$ is 
\emph{conditionally mixing} if
$$
	\lim_{W\subset\subset\mathbb{Z}^d}\mathbf{E}|
	\mathbf{P}[X_V\in A|X_{W^c},Y]-\mathbf{P}[X_V\in A|Y]|=0
$$
for every set $A$ and $V\subset\subset\mathbb{Z}^d$.
\end{defn}

The basic question to be addressed in this setting is therefore: 
\emph{when is the mixing property inherited by conditioning}, that is, 
when does the mixing property of the random field $X$ imply the 
conditional mixing property of $(X,Y)$? 

It will be insightful to reformulate the problem in different terms. For 
simplicity, we will assume in the sequel that the observations are locally 
nondegenerate, that is, that 
$\Phi_v(x_v,dy_v)=g_v(x_v,y_v)\,\varphi(dy_v)$ for some positive density 
$g_v(x_v,y_v)>0$ for all $x_v,y_v$ (the reference measure $\varphi(dy_v)$ 
on $F$ may be any $\sigma$-finite measure.)

\begin{prop}
\label{prop:cgibbs}
Define for every $y\in F^{\mathbb{Z}^d}$ and
$V\subset\subset\mathbb{Z}^d$ the transition kernel on $E^{\mathbb{Z}^d}$
$$
	\gamma_V^y(x,A) = \frac{
	\int \mathbf{1}_A(z)\prod_{v\in V}g_v(z_v,y_v)\,\gamma_V(x,dz)
	}{
	\int \prod_{v\in V}g_v(z_v,y_v)\,\gamma_V(x,dz)
	}.
$$
Then the following hold.
\begin{enumerate}
\item $\gamma^y=(\gamma^y_V)_{V\subset\subset\mathbb{Z}^d}$ is a 
specification for every $y\in\mathbb{Z}^d$.
\item $\mathbf{P}[X\in\,\cdot\,|Y]$ is in $\mathscr{G}(\gamma^Y)$ a.s.
\item $(X,Y)$ is conditionally mixing iff $\mathbf{P}[X\in\,\cdot\,|Y]$ is
extremal in $\mathscr{G}(\gamma^Y)$ a.s.
\end{enumerate}
\end{prop}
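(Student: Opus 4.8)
The plan is to verify the three claims in turn, treating (i) and (ii) as structural bookkeeping and reserving the real work for (iii). Throughout write $G_V(z):=\prod_{v\in V}g_v(z_v,y_v)$, so that $\gamma^y_V(x,\cdot)$ is just $\gamma_V(x,\cdot)$ reweighted by $G_V$ and renormalized; local nondegeneracy $g_v>0$ (with $E$ finite and $V$ finite) makes the normalizing denominator lie in $(0,\infty)$, so each $\gamma^y_V$ is well defined for every $y$. For claim (i) I would check the three axioms of Definition \ref{defn:spec}. Axioms (1) and (2) are immediate: since $G_V$ and any $A\in\sigma\{X_V\}$ depend only on the coordinates in $V$, both integrals defining $\gamma^y_V$ depend on $x$ only through the $V$-marginal of $\gamma_V(x,\cdot)$, hence through $x_{\partial V}$; and if $A\in\sigma\{X_{V^c}\}$ then $\mathbf{1}_A$ is $\gamma_V(x,\cdot)$-a.s.\ equal to $\mathbf{1}_A(x)$ because $\gamma_V$ fixes the coordinates outside $V$, so the $G_V$ factors cancel and $\gamma^y_V(x,A)=\mathbf{1}_A(x)$. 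The only substantive point is consistency $\gamma^y_V\gamma^y_W=\gamma^y_V$ for $W\subseteq V$. Here I would exploit the multiplicativity $G_V=G_W\,G_{V\setminus W}$ together with the fact that $G_{V\setminus W}$ is $\gamma_W(z,\cdot)$-a.s.\ constant (its sites lie in $W^c$, which $\gamma_W$ fixes); a short computation then collapses $\gamma^y_V\gamma^y_W f$ to $\gamma_V(fG_V)/\gamma_V(G_V)$ by invoking the consistency $\gamma_V\gamma_W=\gamma_V$ of the underlying specification twice, and the right-hand side is exactly $\gamma^y_V f$.

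For claim (ii) the key structural fact is that, conditionally on $X$, each $Y_v$ depends only on $X_v$; hence given $X_{V^c}$ the pair $(X_V,Y_V)$ is independent of $Y_{V^c}$, so that $\mathbf{P}[X_V\in\,\cdot\,|X_{V^c},Y]=\mathbf{P}[X_V\in\,\cdot\,|X_{V^c},Y_V]$. Combining the Markov random field identity $\mathbf{P}[X_V\in\,\cdot\,|X_{V^c}]=\gamma_V(X,\cdot)$ with Bayes' formula for $X_V$ given $(X_{V^c},Y_V)$ then yields $\mathbf{P}[X_V\in A|X_{V^c},Y]=\gamma^Y_V(X,A)$ a.s.\ for each fixed $V$ and $A$. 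Since there are countably many finite $V$ and a countable generating class of events, all these identities hold simultaneously off a single null set, which is precisely the assertion $\mathbf{P}[X\in\,\cdot\,|Y]\in\mathscr{G}(\gamma^Y)$ a.s.

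Claim (iii) is the heart of the matter. Disintegrating along $Y$, write $\mathbf{P}_y:=\mathbf{P}[X\in\,\cdot\,|Y=y]$; by (ii) we have $\mathbf{P}_y\in\mathscr{G}(\gamma^y)$ for a.e.\ $y$, so Theorem \ref{thm:gibbs}(3) applies to each $\mathbf{P}_y$ and characterizes its extremality through a mixing limit. Setting $M_W:=\mathbf{P}[X_V\in A|X_{W^c},Y]$ and $M:=\mathbf{P}[X_V\in A|Y]$, and $b_W:=\mathbf{E}[\,|M_W-M|\,|\,Y]$, one identifies $\mathbf{E}_{\mathbf{P}_y}|\mathbf{P}_y[X_V\in A|X_{W^c}]-\mathbf{P}_y[X_V\in A]|=b_W(y)$. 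Thus a.s.\ extremality of $\mathbf{P}[X\in\,\cdot\,|Y]$ is equivalent to $\lim_W b_W(y)=0$ for a.e.\ $y$, whereas conditional mixing is the averaged statement $\lim_W\mathbf{E}[b_W]=0$; the task is to pass between pointwise and averaged vanishing of the same net $b_W$.

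This passage is the main obstacle, since for a general net of functions $L^1$-smallness and a.e.\ smallness are unrelated. I expect the resolution to be a monotonicity observation. As $W\uparrow\mathbb{Z}^d$ the $\sigma$-fields $\mathcal{G}_W:=\sigma\{X_{W^c},Y\}$ decrease, so $M_W$ is a reverse martingale with $\mathbf{E}[M_W|\sigma(Y)]=M$ (using $\sigma(Y)\subseteq\mathcal{G}_W$). For $W\subseteq W'$, conditional Jensen gives $\mathbf{E}[\,|M_W-M|\,|\,\mathcal{G}_{W'}]\ge|M_{W'}-M|$, and applying $\mathbf{E}[\,\cdot\,|Y]$ yields $b_W\ge b_{W'}$ a.s.; hence $b_W$ is a decreasing net converging a.e.\ to some $b_\infty\ge 0$, and by monotone convergence $\mathbf{E}[b_W]\downarrow\mathbf{E}[b_\infty]$. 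Consequently both conditional mixing ($\mathbf{E}[b_\infty]=0$) and a.s.\ extremality ($b_\infty=0$ a.e.) are equivalent to $b_\infty=0$ a.e., which is the desired equivalence. The net limits cause no difficulty because this monotonicity lets one evaluate every limit along a fixed cofinal sequence of boxes $B_n\uparrow\mathbb{Z}^d$, to which the reverse martingale theorem applies directly.
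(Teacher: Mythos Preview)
Your argument is correct and follows essentially the same route as the paper: verify the specification axioms via the underlying consistency $\gamma_V\gamma_W=\gamma_V$, identify $\gamma_V^Y(X,\cdot)=\mathbf{P}[X_V\in\cdot\,|X_{V^c},Y]$ via Bayes and the conditional independence structure, and for part (iii) use Jensen to see that the net $b_W$ is monotone decreasing, so that $L^1$ and a.e.\ convergence to zero coincide along a cofinal sequence.

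One point you gloss over: in (iii), conditional mixing says that for every $A$ and $V$ one has $b_\infty^{A,V}=0$ a.e., while a.s.\ extremality demands that for a.e.\ $y$ one has $b_\infty^{A,V}(y)=0$ for \emph{all} $A$ and $V$; the quantifier swap is not automatic since there are uncountably many $A$. The paper handles this by passing to the martingale limit $\mathbf{E}^y\bigl|\mathbf{P}^y[X\in A\,|\,\bigcap_n\sigma\{X_{W_n^c}\}]-\mathbf{P}^y[X\in A]\bigr|$ and invoking a monotone class argument on a countable generating family (together with the countability of the finite sets $V$) to remove the dependence of the null set on $A,V$. Your approach can accommodate the same fix: the identification $b_W(y)=\mathbf{E}_{\mathbf{P}_y}|\mathbf{P}_y[X_V\in A|X_{W^c}]-\mathbf{P}_y[X_V\in A]|$ shows that $b_\infty^{A,V}(y)$ depends on $A$ only through the $\mathbf{P}_y$-tail conditional expectation, and triviality of the tail $\sigma$-field under $\mathbf{P}_y$ can be tested on a countable generating class.
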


\begin{proof}
We begin by verifying that $\gamma^y$ is a specification.  To this end,  
let $W\subset V\subset\subset\mathbb{Z}^d$.  As 
$\gamma_V\gamma_W=\gamma_V$ and $\gamma_W(fg)=g\,\gamma_Wf$ if $g(x)$ 
depends only on $x_{W^c}$, we can write
\begin{align*}
	&\int \mathbf{1}_A(z)\prod_{v\in V}g_v(z_v,y_v)\,\gamma_V(x,dz) \\
	&=
	\int \gamma_W^y(z',A)
	\int\prod_{w\in W}g_w(z_w,y_w)\,\gamma_W(z',dz)
	\prod_{v\in V\backslash W}g_v(z_v',y_v)\,\gamma_V(x,dz') \\
	&=
	\int \gamma_W^y(z,A)\prod_{v\in V}g_v(z_v,y_v)\,\gamma_V(x,dz).
\end{align*}
Thus $\gamma_V^y\gamma_W^y=\gamma_V^y$, and the remaining 
properties of a specification hold trivially.

Next, we show that $\mathbf{P}[X\in\,\cdot\,|Y]$ is in 
$\mathscr{G}(\gamma^Y)$ a.s.  To this end, let us fix any regular 
version $\mathbf{P}^Y$ of the conditional distribution 
$\mathbf{P}[\,\cdot\,|Y]$.  We must show that for a.e.\ observation record 
$y$, we have $\mathbf{P}^y[X\in A|X_{V^c}]=\gamma_V^y(X,A)$ for all $A$, 
that is, we must show that
$$
	\mathbf{E}^y[\gamma_V^y(X,A)\mathbf{1}_B]=\mathbf{P}^y[\{X\in A\}\cap B]
	\quad\mbox{for every measurable }A\mbox{ and }B\in\sigma\{X_{V^c}\}	
$$
holds for $\mathbf{P}$-a.e.\ $y$.  Is easily seen by the definition 
of a hidden Markov random field that
$$
	\gamma_V^Y(X,A)=\mathbf{P}[X\in A|X_{V^c},Y].
$$
We therefore have
$$
	\mathbf{E}[\gamma_V^Y(X,A)\mathbf{1}_B\mathbf{1}_C]
	=\mathbf{P}[\{X\in A\}\cap B\cap C]
$$
for every $A$ and $B\in\sigma\{X_{V^c}\}$, $C\in\sigma\{Y\}$.  It 
follows by disintegration that
$$
	\mathbf{E}^Y[\gamma_V^Y(X,A)\mathbf{1}_B]=\mathbf{P}^Y[\{X\in A\}\cap B]
$$
holds $\mathbf{P}$-a.s.\ for a fixed choice of $A$, 
$B\in\sigma\{X_{V^c}\}$, and thus simultaneously for a countable family of 
sets $A$ and $B\in\sigma\{X_{V^c}\}$.  By choosing the countable family to 
be a generating class (note that all our $\sigma$-fields are countably 
generated), the above identity holds simultaneously for every $A$ and 
$B\in\sigma\{X_{V^c}\}$ by a monotone class argument. As there are only 
countably many $V\subset\subset\mathbb{Z}^d$, we have proved that 
$\mathbf{P}[X\in\,\cdot\,|Y]$ is in $\mathscr{G}(\gamma^Y)$ a.s.

Finally, we consider the conditional mixing property.  As the limit in the 
definition of (conditional) mixing is over a decreasing net (by Jensen's 
inequality), it suffices to consider the limit along any fixed cofinal 
increasing sequence $W_n\subset\subset\mathbb{Z}^d$.  Thus by the 
martingale convergence 
theorem, the conditional mixing property holds if and only if
$$
	\lim_{n\to\infty}
	\mathbf{E}[~|
	\mathbf{P}[X\in A|X_{W_n^c},Y]-\mathbf{P}[X\in A|Y]|~|Y]=0
	\quad\mbox{a.s.}	
$$
for every $V\subset\subset\mathbb{Z}^d$ and $A\in\sigma\{X_V\}$.
As we have shown that $\mathbf{P}[X\in A|X_{W_n^c},Y]=
\gamma_{W_n}^Y(X,A)=\mathbf{P}^Y[X\in A|X_{W_n^c}]$, the conditional 
mixing property is equivalent to
$$
	\lim_{n\to\infty}
	\mathbf{E}^y|
	\mathbf{P}^y[X\in A|X_{W_n^c}]-\mathbf{P}^y[X\in A]|=0
	\quad\mbox{for }\mathbf{P}\mbox{-a.e.\ }y	
$$
for every $V\subset\subset\mathbb{Z}^d$ and $A\in\sigma\{X_V\}$.
But by the martingale convergence theorem
$$
	\lim_{n\to\infty}
	\mathbf{E}^y|
	\mathbf{P}^y[X\in A|X_{W_n^c}]-\mathbf{P}^y[X\in A]|=
	\mathbf{E}^y|
	\mathbf{P}^y[X\in A|\textstyle{\bigcap_n}
	\sigma\{X_{W_n^c}\}]-\mathbf{P}^y[X\in A]|.
$$
Thus we can again use a monotone class argument as above to remove
the dependence of the $\mathbf{P}$-null set on $V$ and $A$.  Thus
$(X_v,Y_v)_{v\in\mathbb{Z}^d}$ is conditionally mixing if and only if
$$
	\lim_{W\subset\subset\mathbb{Z}^d}
	\mathbf{E}^y|
	\mathbf{P}^y[X\in A|X_{W^c}]-\mathbf{P}^y[X\in A]|=0
	\quad\mbox{for every }V\subset\subset\mathbb{Z}^d,~
	A\in\sigma\{X_V\}
$$
holds for $\mathbf{P}$-a.e.\ $y$, which is precisely the mixing
property of $\mathbf{P}[X\in\,\cdot\,|Y]$.
\end{proof}

Proposition \ref{prop:cgibbs} shows that the conditional distribution
$\mathbf{P}[X\in\,\cdot\,|Y]$ defines again a (random) Markov random 
field, and gives an explicit expression for its specification $\gamma^Y$.  
The inheritance of ergodicity can now be formulated in terms of the 
ergodic properties of the conditional field.  In particular, we can pose 
two natural questions:
\begin{enumerate}
\item If $\mathbf{P}[X\in\,\cdot\,]$ is extremal in $\mathscr{G}(\gamma)$,
when is $\mathbf{P}[X\in\,\cdot\,|Y]$ extremal in $\mathscr{G}(\gamma^Y)$ 
a.s.?
\item If $|\mathscr{G}(\gamma)|=1$, when is $|\mathscr{G}(\gamma^Y)|=1$ 
a.s.?
\end{enumerate}
The first question is evidently the direct spatial analogue of the filter 
stability problem: when is the mixing property inherited by the 
conditional distribution?  The second question is analogous, but for the 
uniform mixing property.  It is evident from Theorem \ref{thm:gibbs} that
$|\mathscr{G}(\gamma^Y)|=1$ a.s.\ implies the conditional mixing property.
The stronger conclusion $|\mathscr{G}(\gamma^Y)|=1$ a.s.\ is perhaps less 
natural from the point of view of conditional distributions, but is of 
practical relevance in its own right as it is closely connected with 
the computational complexity of MCMC methods for Bayesian image analysis 
\cite{FMS97}.

As in the filter stability problem, local nondegeneracy of the 
observations does not suffice to obtain an affirmative answer to either of 
the above questions.  In fact, we have a direct analogue of the example 
given in section \ref{sec:phtrans}.

\begin{ex}
\label{ex:rfbla}
Let $E=F=\{-1,1\}$, and define the random field $(X_v)_{v\in\mathbb{Z}^2}$
such that $X_v$ are i.i.d.\ symmetric Bernoulli random variables.  It is 
evident that this model is uniformly mixing in the most trivial sense 
(thus uniqueness and extremality both hold).  

We now attach an observation $Y_{\{v,w\}}$ to each edge 
$\{v,w\}\subset\mathbb{Z}^d$, $\|v-w\|=1$ by setting 
$Y_{\{v,w\}}=X_vX_w\xi_{\{v,w\}}$ with $\xi_{\{v,w\}}$ i.i.d.\ and 
independent of $X$ with $\mathbf{P}[\xi_{\{v,w\}}=-1]=p$.  In this manner, 
we evidently obtain a direct counterpart of the model of section 
\ref{sec:phtrans}.  While the observations in this model are defined on 
the edges rather than on the vertices as we have done in this section, a 
result that is entirely analogous to Proposition \ref{prop:cgibbs} holds 
in this setting (see also Remark \ref{rem:edge} above and Remark 
\ref{rem:sillyedge} below).

We can now proceed identically as in the proof of Theorem 
\ref{thm:phtrans} to show that there exists $0<p_\star<1/2$ such that the 
hidden Markov random field $(X,Y)$ fails to be conditionally mixing for 
$p<p_\star$.  In fact, this is precisely the idea behind the proof of 
Theorem \ref{thm:phtrans} in the first place: the model 
$(X_k^v,Y_k^v)_{k,v\in\mathbb{Z}}$ is considered as a space-time random 
field, and the problem is addressed using classical methods from 
statistical mechanics.

The present example could be considered as a toy model in image analysis. 
The underlying field $X$ represents a grid of black or white pixels of an 
image, and the observations $Y$ correspond to noisy measurements of the 
gradient of the image at each point.  Thus we see that the ability to 
reconstruct the image based on the noisy gradient information undergoes a 
phase transition at a positive signal-to-noise ratio.
\end{ex}

\begin{rem}
\label{rem:sillyedge}
The use of edge observations in Example \ref{ex:rfbla} is merely cosmetic: 
the same example can be reformulated in terms of vertex observations. 
Indeed, let us define the random field $(\tilde X_v,\tilde 
Y_v)_{v\in\mathbb{Z}^d}$ with $\tilde 
X_v\in\{-1,1\}^3$ and $\tilde Y_v\in\{-1,1\}^2$ by setting $\tilde 
X_v=(X_v,X_{v+(0,1)},X_{v+(1,0)})$ and $\tilde 
Y_v=(X_vX_{v+(0,1)}\xi_{\{v,v+(0,1)\}},X_vX_{v+(1,0)}\xi_{\{v,v+(1,0)\}})$, 
where $X_v$ and $\xi_{\{v,w\}}$ are as in Example \ref{ex:rfbla}.  Then 
$\tilde X$ is still a uniformly mixing Markov random field, the 
observations $\tilde Y$ are locally nondegenerate, and 
$\mathbf{P}[\tilde X^1\in\,\cdot\,|\tilde Y] = \mathbf{P}[X\in\,\cdot\,|Y]$.
In particular, the above conditional phase transition 
arises identically in this formulation.
\end{rem}

In view of the above, the inheritance of mixing properties of random 
fields under conditioning cannot be taken for granted.  Just as in the 
filter stability problem, however, it is natural to expect that 
conditional mixing will hold in the absence of observation symmetries.  
Such a conjecture is often implicit in work on Bayesian image analysis 
(cf.\ \cite[p.\ 6]{FMS97}).  For example, we can formulate the natural 
analogue of Conjecture \ref{conj:obs}.

\begin{conj}
\label{conj:rfobs}
Let $(X_v,Y_v)_{v\in\mathbb{Z}^2}$ be a hidden Markov field
with $E=F=\{-1,1\}$ and
$$
	Y_v = X_v\xi_v,\qquad
	(\xi_v)_{v\in\mathbb{Z}^2}\mbox{ are i.i.d.}\independent X
	\mbox{ with } \mathbf{P}[\xi_v=-1]=p.
$$
If the underlying random field $X$ is mixing, then the model is
conditionally mixing.
\end{conj}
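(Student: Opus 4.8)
The plan is to reduce the conjecture to a statement about extremality and then to import the entropy-based observability mechanism developed in section \ref{sec:obs}. By Proposition \ref{prop:cgibbs}, conditional mixing is equivalent to extremality of the random specification $\gamma^Y$, which in turn is equivalent to triviality of the spatial tail $\sigma$-field $\mathcal{T}:=\bigcap_{W\subset\subset\mathbb{Z}^2}\sigma\{X_{W^c}\}$ under $\mathbf{P}[\,\cdot\,|Y]$. Since the hypothesis that $X$ is mixing asserts precisely that $\mathcal{T}$ is trivial under $\mathbf{P}$ itself, the whole content of the conjecture is that conditioning on the informative observations $Y$ \emph{preserves} tail-triviality. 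Thus the goal becomes to show $\mathbf{P}[X_v\in A\,|\,Y,\mathcal{T}]=\mathbf{P}[X_v\in A\,|\,Y]$ a.s.

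First I would establish a purely spatial analogue of the prediction step of Proposition \ref{prop:pinsker}. Assuming, as in section \ref{sec:transinv}, that the model is translation invariant, I would fix the lexicographic order $\prec$ on $\mathbb{Z}^2$ and use the Conze entropy-rate identity (Lemma \ref{lem:conze}) to show that the far boundary carries asymptotically no information about a local observation once the lexicographically earlier observations are given:
$$
	H(Y_v\mid Y^{\prec v})-H(Y_v\mid Y^{\prec v},X_{\mathrm{far}})
	\longrightarrow 0
$$
as the conditioning region $X_{\mathrm{far}}$ recedes. Here the mutual-information telescoping that made the finite bound $H(X_0)<\infty$ usable in Proposition \ref{prop:babyobs} is replaced by its entropy-rate counterpart, exactly as in the passage from Proposition \ref{prop:babyobs} to Theorem \ref{thm:iobs}. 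The second step is to transfer this stability from observations to states by invoking observability: because $p\ne\frac12$, the single-site channel $g(u,y)=\mathbf{P}[Y_v=y\,|\,X_v=u]$ is invertible (the operators $T_i$ of Proposition \ref{prop:babyobs} remain invertible here), so every local function of $X_V$ can be realized as a conditional expectation of a function of $Y_V$. Combining this with Pinsker's inequality applied to the entropy difference above would yield stability of $\mathbf{P}[X_V\in\,\cdot\,|\,Y,X_{\mathrm{far}}]$ against removal of the far boundary, the spatial analogue of the conclusion of Theorem \ref{thm:iobs}.

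The hard part, and the reason this remains a conjecture, is the same limit-exchange obstruction identified in Remark \ref{rem:weirdexchg}. The lexicographic splitting produces an \emph{intermediate} conditioning, namely the far boundary together with a lexicographic half of the current observations, rather than conditioning on all of $Y$ as the definition of conditional mixing requires. Passing from this intermediate object to the genuine conditional field would require exchanging the limit along the lexicographic slice with the recession of the boundary, and establishing the corresponding $\sigma$-field identity (of the type $\bigcap(\mathcal{Y}^{\prec}\vee\mathcal{X}_{\mathrm{far}})=\mathcal{Y}$) is itself essentially equivalent to the conditional mixing problem one is trying to solve. Unlike the continuous-time setting, where Lemma \ref{lem:blum} supplies the missing identity through path continuity, no such device is available in the discrete random-field case. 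A secondary difficulty is that the argument as sketched uses only observability and not the assumed mixing of $X$; as in the remark following Theorem \ref{thm:iobs}, it is conceivable that mixing of $X$ must genuinely be combined with observability to close the gap, and it is not clear how to effect this combination. Finally, the monotonicity mechanism of section \ref{sec:mono} offers an entirely different route that avoids entropy methods, but the direct observation model $Y_v=X_v\xi_v$ does not obviously carry the requisite monotone (FKG-type) structure, so it does not appear to apply to the conjecture as stated.
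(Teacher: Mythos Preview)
The statement is a \emph{conjecture}: the paper does not prove it and explicitly says ``We do not know how to prove such a conjecture in a general setting.'' Your proposal is not a proof either, but rather a discussion of approaches and their obstructions, and in that respect it is accurate and well aligned with the paper's own assessment. In particular, your identification of the central difficulty---the limit-exchange problem between the lexicographic slice and the receding boundary, mirroring Remark~\ref{rem:weirdexchg}---is exactly the obstruction the paper highlights.

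Two corrections are in order. First, your final remark about monotonicity is misdirected: the observation model $Y_v=X_v\xi_v$ is precisely the one treated in section~\ref{sec:mono}, and Lemma~\ref{lem:inhmono} shows that this channel \emph{does} preserve monotonicity of the specification. The reason Theorem~\ref{thm:mono} does not resolve Conjecture~\ref{conj:rfobs} is different: it requires the additional hypothesis that the underlying specification $\gamma$ is monotone (not assumed in the conjecture), and even granting that, it establishes inheritance of \emph{uniqueness} ($|\mathscr{G}(\gamma)|=1$) rather than of extremality/mixing; see Remark~\ref{rem:extmono}. Second, your entropy-rate argument via Lemma~\ref{lem:conze} presupposes translation invariance, which the conjecture does not assume; this should be flagged as a further restriction of the approach, not just an incidental choice.
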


We do not know how to prove such a conjecture in a general setting.  
However, we will presently establish the validity of such a result under 
monotonicity assumptions on the underlying field.  This provides an 
entirely different mechanism for the inheritance of ergodicity than
the observability theory that was developed in section \ref{sec:obs} 
above.

\subsection{Monotonicity}
\label{sec:mono}

The goal of this section is to prove a variant of Conjecture 
\ref{conj:rfobs} under monotonicity assumptions on the underlying field.  
In such models, the direct observation structure $Y_v = X_v\xi_v$ ensures 
that monotonicity properties are preserved under conditioning on the 
observations, which greatly facilitates the analysis of the conditional 
random field.  The arguments used here are directly inspired by the 
methods used in \cite{Gol80, Fol80, FP97} to investigate the \emph{global} 
Markov property of random fields.

In this section, we will assume that $E=F=\{-1,1\}$.  Fix a specification 
$\gamma$ and a Markov random field $X=(X_v)_{v\in\mathbb{Z}^d}$ specified 
by $\gamma$, and introduce the observations 
$$
	Y_v = X_v\xi_v,\qquad
	(\xi_v)_{v\in\mathbb{Z}^2}\mbox{ are independent}
	\independent X \mbox{ with } 
	\mathbf{P}[\xi_v=-1]=p_v.
$$
For simplicity, we will assume throughout this section that $0<p_v\le 1/2$ for all 
$v$.\footnote{
	The assumption $p_v>0$ ensures local nondegeneracy, which is 
	convenient in view of Proposition \ref{prop:cgibbs} but is not 
	essential for the results in this section.  The assumption 
	$p_v\le 1/2$ does not entail any loss of generality: if $p_v>1/2$, 
	we can reduce to $p_v<1/2$ by considering the inverted 
	observation $-Y_v$ instead of $Y_v$.}

The following monotonicity property is the key assumption of this section.
This property has various useful characterizations, cf.\ 
\cite[Theorem 2.27]{Gri06}.  Here a function 
$f:\{-1,1\}^{\mathbb{Z}^d}\to\mathbb{R}$ is called increasing if
$f(x)\ge f(z)$ whenever $x_v\ge z_v$ for all $v\in\mathbb{Z}^d$.

\begin{defn}
\label{defn:mono}
A specification $\gamma$ for a $\{-1,1\}$-valued random field is called
\emph{monotone} if for every bounded increasing function $f$ and
$V\subset\subset\mathbb{Z}^d$, the function $\gamma_Vf$ is increasing.
\end{defn}

We can now formulate the main result of this section.

\begin{thm}
\label{thm:mono}
For the model of this section, suppose that the specification $\gamma$ is 
monotone.  Then $|\mathscr{G}(\gamma)|=1$ implies that
$|\mathscr{G}(\gamma^Y)|=1$ a.s.  In particular, under the monotonicity 
assumption, uniqueness of the underlying random field implies conditional 
mixing.
\end{thm}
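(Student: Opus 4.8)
The plan is to lift the theory of maximal and minimal Gibbs measures for monotone specifications to the conditional specification $\gamma^Y$ produced in Proposition \ref{prop:cgibbs}, and then to pin the two extremal conditional measures together using uniqueness of $\gamma$.

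First I would record that $\gamma^y$ is itself monotone for every fixed $y$. Since $Y_v=X_v\xi_v$ gives $g_v(x_v,y_v)\propto e^{\beta_v x_v y_v}$ with $\beta_v:=\tfrac12\log\frac{1-p_v}{p_v}\ge 0$, passing from $\gamma$ to $\gamma^y$ merely tilts the single--site weights by the (site--dependent, arbitrary--sign) external field $h_v=\beta_v y_v$. At the single--site level the conditional probability of $+1$ under $\gamma^y_{\{v\}}$ is an increasing function of that under $\gamma_{\{v\}}$, so monotone dependence on the boundary spins is preserved regardless of the sign of $h_v$; the general case follows from the characterization in Definition \ref{defn:mono}. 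Consequently $\mathscr G(\gamma^y)$ has a maximal and a minimal element $\mu^{y,+},\mu^{y,-}$, obtained as the monotone limits $\mu^{y,\pm}=\lim_n \gamma^y_{\Lambda_n}(x^\pm,\cdot)$ over boxes $\Lambda_n\uparrow\mathbb Z^d$ with constant $\pm1$ boundary conditions $x^\pm$, and every element of $\mathscr G(\gamma^y)$ lies between them in stochastic order.

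Now $|\mathscr G(\gamma^Y)|=1$ holds iff $\mu^{Y,+}=\mu^{Y,-}$, and by Strassen's theorem two stochastically ordered $\{-1,1\}$--field laws with equal single--site magnetizations must coincide (an increasing coupling makes the a.s.\ nonnegative coordinate differences have zero mean). Since $\mu^{Y,-}\preceq\mu^{Y,+}$ gives $\mu^{Y,-}(X_v{=}1)\le\mu^{Y,+}(X_v{=}1)$ pointwise in $Y$, it suffices to prove the zero--mean identity $\mathbf E[\mu^{Y,+}(X_v{=}1)]=\mathbf E[\mu^{Y,-}(X_v{=}1)]$ for each of the countably many $v$. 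Two facts pin these averages. (i) \emph{True boundary condition:} as $\mathbf P[X\in\cdot\,|Y]\in\mathscr G(\gamma^Y)$ a.s.\ (Proposition \ref{prop:cgibbs}), we have $\mathbf P[X\in\cdot\,|Y,X_{\Lambda_n^c}]=\gamma^Y_{\Lambda_n}(X,\cdot)$, whence the tower property yields $\mathbf E[\gamma^Y_{\Lambda_n}(X,f)]=\mu_\gamma(f)$ for every $n$ and bounded $f$, where $\mu_\gamma$ denotes the law of $X$. Combined with monotonicity and $x^-\le X\le x^+$, this sandwiches $\mathbf E[\mu^{Y,-}(f)]\le\mu_\gamma(f)\le\mathbf E[\mu^{Y,+}(f)]$ for increasing local $f$. (ii) \emph{Uniqueness of $\gamma$:} the finite--volume fields $\mu^\pm_n:=\gamma_{\Lambda_n}(x^\pm,\cdot)$ converge to $\mu_\gamma$; introducing the auxiliary laws $\mathbf Q^\pm_n$ ($X\sim\mu^\pm_n$, then $Y\,|\,X$ the usual noise) and applying Bayes gives $\mathbf E_{\mathbf Q^\pm_n}[\gamma^Y_{\Lambda_n}(x^\pm,f)]=\mu^\pm_n(f)\to\mu_\gamma(f)$.

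The main obstacle is to close the gap between the one--sided bounds in (i), that is, to show $\mathbf E[\mu^{Y,+}(f)]=\mu_\gamma(f)$, equivalently that the $\mathbf P$--averaged boundary gap $\mathbf E[\gamma^Y_{\Lambda_n}(x^+,f)-\gamma^Y_{\Lambda_n}(x^-,f)]\to 0$. The difficulty is that the convergence in (ii) holds with the observations $Y$ generated from $\mu^\pm_n$ (under $\mathbf Q^\pm_n$), whereas we need the average under $\mathbf P$, whose $Y$ is generated from $\mu_\gamma$; and although $\mu_\gamma\preceq\mu^+_n$, the map $X\mapsto Y=X\xi$ fails to be monotone because of the random signs $\xi$, so the two observation laws cannot be compared by stochastic domination. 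This is exactly the non--ferromagnetic obstruction already flagged after Proposition \ref{prop:dob}. The plan is therefore to estimate this discrepancy quantitatively: using monotonicity, the interior gap can be written as a telescoping sum over the boundary spins $w\in\partial\Lambda_n$ of nonnegative influences on the interior observable $f$ (supported near $v$), and since $\mu_\gamma$ and $\mu^+_n$ agree locally while $f$ sits far from $\partial\Lambda_n$, each such influence should be small. Converting the uniform mixing of $\gamma$ guaranteed by $|\mathscr G(\gamma)|=1$ (Theorem \ref{thm:gibbs}) into decay of these \emph{conditional} boundary influences, uniformly over the random observations, is the crux of the argument and is where the monotonicity assumption and uniqueness of the underlying field are both indispensable; this is precisely the point at which the global Markov property techniques of \cite{Gol80, Fol80, FP97} would be adapted. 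Once the gap is shown to vanish, the zero--mean identity follows, hence $\mu^{Y,+}=\mu^{Y,-}$ a.s., i.e.\ $|\mathscr G(\gamma^Y)|=1$ a.s.; conditional mixing is then immediate from Theorem \ref{thm:gibbs}.
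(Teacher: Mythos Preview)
Your setup is correct up to a point: monotonicity of $\gamma^y$, the existence of maximal and minimal elements $\mu^{Y,\pm}$, the reduction of $|\mathscr{G}(\gamma^Y)|=1$ to $\mu^{Y,+}=\mu^{Y,-}$, and the sandwich $\mathbf{E}[\mu^{Y,-}(f)]\le\mu_\gamma(f)\le\mathbf{E}[\mu^{Y,+}(f)]$ for increasing local $f$ are all right. But the step you flag as ``the crux'' --- closing the gap via some conditional boundary--influence decay --- is left entirely open, and the obstacle you identify is based on a misconception. You write that ``the map $X\mapsto Y=X\xi$ fails to be monotone because of the random signs $\xi$''; pathwise this is true, but the relevant monotonicity is \emph{stochastic}: since $p_v\le\tfrac12$, we have $\mathbf P[Y_v{=}1\mid X_v{=}1]=1-p_v\ge p_v=\mathbf P[Y_v{=}1\mid X_v{=}-1]$, so $\mathbf E[h(Y)\mid X]$ is increasing in $X$ for every increasing $h$. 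This is precisely the fact that makes the argument work, and it renders your detour through $\mathbf Q^\pm_n$ and influence decay unnecessary.

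The paper's proof avoids your ``crux'' altogether by a stronger identification (Lemma \ref{lem:foll}, due to F\"ollmer): the maximal element $\mu^{Y,+}$ of $\mathscr G(\gamma^Y)$ is a.s.\ a version of $\mathbf P_+[X\in\cdot\mid Y]$, where $\mathbf P_+$ is the maximal element of $\mathscr G(\gamma)$, and likewise for the minimal side. The proof of this uses two monotonicity facts you did not isolate: (a) $y\mapsto(\gamma^y_Vf)(\boldsymbol{+})$ is increasing for increasing $f$ (part 2 of Lemma \ref{lem:inhmono}), and (b) the stochastic monotonicity of $Y\mid X$ just mentioned. Together these make $X\mapsto\mathbf E_+[(\gamma^Y_Vf)(\boldsymbol{+})\,g(Y)\mid X]$ increasing, so its $\mathbf P_+$--expectation can be written as a decreasing net $\inf_W\gamma_W(\boldsymbol{+},\cdot)$; taking the diagonal $V=W$ collapses the computation to $\mathbf E_+[f(X)g(Y)]$. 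Once $\mu^{Y,\pm}=\mathbf P_\pm[X\in\cdot\mid Y]$ is known, uniqueness $\mathbf P_+=\mathbf P_-$ gives $\mu^{Y,+}=\mu^{Y,-}$ a.s.\ in one line, with no decay--of--influence estimate needed.
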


Under the monotonicity assumption, Theorem \ref{thm:mono} essentially 
resolves the analogue of Conjecture \ref{conj:rfobs} for uniqueness rather 
than extremality (this is the special case $p_v=p\ne 1/2$ for all $v$).  A 
partial result on the inheritance of extremality when the field is not 
unique can be deduced from the proof as well, see Remark \ref{rem:extmono} 
below.

Nonetheless, the result of Theorem \ref{thm:mono} is arguably quite 
different in spirit from Conjecture \ref{conj:rfobs}, as it does not 
require the absence of observation symmetries.  For example, we could 
choose the noise parameters $p_v$ such that $p_v=\frac{1}{2}$ for 
alternating sites in the lattice $\mathbb{Z}^d$: then there is certainly a 
large class of observation symmetries, as only every other site in the 
lattice is observed.  Thus Theorem \ref{thm:mono} is not addressing a 
symmetry-breaking phenomenon of the type that motivated the observability 
conjecture in section \ref{sec:obs}.  On the other hand, as will 
become clear in the proof of Theorem \ref{thm:mono}, the present approach 
directly implements the idea that ergodicity is inherited from the 
underlying model to the conditional distribution, in contrast with the 
observability theory of section \ref{sec:obs} which does not exploit at 
all the ergodic properties of the model.

\begin{rem}
It would be interesting to obtain a counterpart to Theorem \ref{thm:mono} 
for the filter stability problem.  Unfortunately, this does not appear to 
be possible, at least in the setting of section \ref{sec:ihmm}.  The proof 
of Theorem \ref{thm:mono} relies on the fact that the conditional 
distributions of the random field given the observations are monotone, 
which is essentially equivalent to the validity of Definition 
\ref{defn:mono}.  However, in the filtering model of section 
\ref{sec:ihmm}, the associated space-time random field must generally fail 
to be monotone in the sense of Definition \ref{defn:mono}, cf.\ 
\cite{Lig06} for a discussion in the continuous time setting.  While the 
space-time distributions of interacting particle systems with monotone 
transition probabilities do satisfy some weaker monotonicity properties, 
cf.\ \cite[section 3.1]{LMS90} or \cite{Lig06}, such weaker properties do 
not suffice for the proof of Theorem \ref{thm:mono}. 
\end{rem}

We now turn to the proof of Theorem \ref{thm:mono}.  We begin by 
establishing the inheritance of monotonicity by the conditional 
specification $\gamma^y$.

\begin{lem}
\label{lem:inhmono}
Suppose that $\gamma$ is monotone.  Then the following hold:
\begin{enumerate}
\item $\gamma^y$ is monotone for every $y\in\{-1,1\}^{\mathbb{Z}^d}$.
\item $y\mapsto \gamma^y_Vf$ is increasing for every
$V\subset\subset\mathbb{Z}^d$ and bounded increasing $f$.
\end{enumerate}
\end{lem}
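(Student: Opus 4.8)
The plan is to reduce both statements to the single-site conditionals of $\gamma^y$, which the direct observation structure renders completely explicit. Recall from Proposition \ref{prop:cgibbs} that $\gamma_V^y$ is $\gamma_V$ tilted by the weight $\prod_{v\in V}g_v(z_v,y_v)$ with $g_v(z_v,y_v)=\mathbf{P}[Y_v=y_v|X_v=z_v]$; since $Y_v=X_v\xi_v$ with $\mathbf{P}[\xi_v=-1]=p_v\le 1/2$, we have $g_v(z_v,y_v)\propto e^{\beta_v z_v y_v}$ where $\beta_v:=\tfrac12\log\tfrac{1-p_v}{p_v}\ge 0$, so the tilt is by a product of single-site weights, i.e.\ an external field of strength $\beta_v$ in the direction $y_v$. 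The key computation is that the single-site conditional of $\gamma^y$ is a fixed increasing reweighting of that of $\gamma$:
$$
	\gamma_{\{v\}}^y(z,\{z_v=1\})=\phi_{y_v}\big(\gamma_{\{v\}}(z,\{z_v=1\})\big),
	\qquad
	\phi_{y_v}(a):=\frac{r_{y_v}\,a}{r_{y_v}\,a+(1-a)},
$$
where $r_{y_v}:=g_v(1,y_v)/g_v(-1,y_v)=e^{2\beta_v y_v}$. One checks at once that $\phi_{y_v}$ is strictly increasing in $a$ and that $\phi_{+1}(a)\ge\phi_{-1}(a)$ for all $a$ (because $p_v\le1/2$ forces $r_{+1}\ge r_{-1}$). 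Statement (i) is now immediate from the single-site characterization of monotonicity (one of the characterizations of \cite[Theorem 2.27]{Gri06}), by which a specification is monotone precisely when each $z\mapsto\gamma_{\{v\}}(z,\{z_v=1\})$ is increasing: monotonicity of $\gamma$ makes the base conditional increasing, and composing with the increasing map $\phi_{y_v}$ preserves this, so $\gamma^y$ is monotone.

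For statement (ii) I would fix $V\subset\subset\mathbb{Z}^d$, a boundary configuration $x$, and $y\le y'$, and compare the strictly positive measures $\mu=\gamma_V^y(x,\cdot)$ and $\nu=\gamma_V^{y'}(x,\cdot)$ on the finite lattice $\{-1,1\}^V$. Their single-site conditionals within $V$ are $\phi_{y_v}(a(\eta))$ and $\phi_{y'_v}(a(\eta'))$ respectively, where $a(\cdot)$ denotes the base single-site conditional of $\gamma$ with boundary $x_{V^c}$. For $\eta\le\eta'$ one has $a(\eta)\le a(\eta')$ by monotonicity of $\gamma$, and combining this with monotonicity of $\phi$ and with $\phi_{y_v}\le\phi_{y'_v}$ yields the cross-monotonicity $\mu(z_v=1\mid\eta)\le\nu(z_v=1\mid\eta')$ for every $v$ and every $\eta\le\eta'$. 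By Holley's inequality in its conditional form --- equivalently, by the standard monotone coupling of the single-site Glauber dynamics whose stationary laws are $\mu$ and $\nu$ --- this gives $\mu\preceq\nu$, i.e.\ $\gamma_V^y f\le\gamma_V^{y'}f$ for every increasing $f$, which is exactly the claim.

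I expect the only delicate point to be this passage from cross-monotonicity of single-site conditionals to stochastic domination. The naive route through the lattice form $\nu(s\vee t)\mu(s\wedge t)\ge\mu(s)\nu(t)$ of Holley's inequality is awkward, since it would require an FKG-type inequality for the base measure $\gamma_V(x,\cdot)$ that monotonicity of $\gamma$ does not provide (the latter gives only stochastic domination between boundary conditions, not positive association within a fixed measure). Working purely at the level of single-site conditionals circumvents this: because the product weight $\prod_{v}g_v(z_v,y_v)$ is log-modular, the tilt never couples distinct sites, so the entire effect of the observations is carried by the scalar maps $\phi_{y_v}$, and the conditional form of Holley's inequality applies directly. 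The remaining verifications --- the displayed formula for $\gamma_{\{v\}}^y$ and the two monotonicities of $\phi_{y_v}$ --- are elementary. The same conditional comparison, run with $y$ fixed and $x\le x'$, also reproves (i) directly, using only that $\phi_{y_v}$ is increasing and that $\eta\cup x_{V^c}\le\eta'\cup x'_{V^c}$.
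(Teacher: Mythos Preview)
Your argument is correct and takes a genuinely different route from the paper's.  The paper introduces interpolating kernels $\gamma_{V,W}^y$ obtained by tilting $\gamma_V$ only by the observation weights at sites $w\in W\cap V$, and then proves by induction on $|V|+|W|$ that $(\gamma_{V,W}^y f)(x)$ is increasing jointly in $x$ and $y$.  The induction step peels off one weight at a time: using the consistency relation $\gamma_{V,W}^y\gamma_{V\setminus\{v\},W}^y=\gamma_{V,W}^y$, the problem is reduced to showing that $\gamma_{V,W}^y(x,\{z_v=1\})$ is increasing, which follows from the odds-ratio identity
\[
\frac{\gamma_{V,W}^y(x,\{z_v=1\})}{1-\gamma_{V,W}^y(x,\{z_v=1\})}
= e^{2\beta_v y_v}\,
\frac{\gamma_{V,W\setminus\{v\}}^y(x,\{z_v=1\})}{1-\gamma_{V,W\setminus\{v\}}^y(x,\{z_v=1\})}
\]
and the induction hypothesis.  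Your approach instead computes the single-site conditionals of $\gamma^y$ once and for all as $\phi_{y_v}\circ\gamma_{\{v\}}$, and then invokes the conditional form of Holley's inequality (equivalently, monotone Glauber coupling) to obtain stochastic domination directly in both $x$ and $y$.

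Your route is shorter and more conceptual, and your diagnosis that the \emph{conditional} form of Holley is the right tool (rather than the lattice condition, which would require FKG for $\gamma_V(x,\cdot)$) is exactly right.  One caveat: the Holley/Glauber argument and the single-site characterization in \cite[Theorem~2.27]{Gri06} that you invoke both require strict positivity of $\gamma_V(x,\cdot)$, which is not assumed in the paper's setup (though it holds in all the examples of interest, e.g.\ finite-range Gibbs specifications).  The paper's induction proof, by contrast, uses only the definition of monotonicity of $\gamma$ and works without any positivity hypothesis.  So what your approach buys is transparency and brevity; what the paper's approach buys is that it avoids importing Holley's theorem and the accompanying positivity assumption.
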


\begin{proof}
It will be convenient to write 
$$
	g_v(x_v,y_v)=\mathbf{P}[Y_v=y_v|X_v=x_v]=\sqrt{p_v(1-p_v)}\,e^{\beta_v 
	y_vx_v}
$$
with $\beta_v=\log\sqrt{(1-p_v)/p_v}\ge 0$ in Proposition \ref{prop:cgibbs}.

Define for every $y\in F^{\mathbb{Z}^d}$ and
$W,V\subset\subset\mathbb{Z}^d$ the transition kernel on 
$E^{\mathbb{Z}^d}$
$$
	\gamma_{V,W}^y(x,A) = \frac{
	\int \mathbf{1}_A(z)\prod_{w\in W\cap V}e^{\beta_wy_wz_w}\,\gamma_V(x,dz)
	}{
	\int \prod_{w\in W\cap V}e^{\beta_wy_wz_w}\,\gamma_V(x,dz)
	}.
$$
Evidently $\gamma_V^y=\gamma_{V,V}^y$ and 
$\gamma_V=\gamma_{V,\varnothing}^y$, and for simplicity we will write
$\gamma_{\varnothing,W}^yf=f$.
We now prove that $(\gamma_{V,W}^yf)(x)$ is increasing in $x,y$ for 
every bounded increasing function $f$ by induction on $|V|+|W|$.  The 
claim is obviously true for $|V|+|W|=1$ by the monotonicity of $\gamma$.  
In the remainder of the proof, we suppose that the claim is true for
all $V,W$ such that $|V|+|W|=m$, and we proceed with the induction step.

Fix $V,W$ such that $|V|+|W|=m+1$.  The claim is trivially true if
$W\cap V=\varnothing$ by the monotonicity of $\gamma$.  Otherwise, fix 
$v\in W\cap V$ and a bounded increasing function $f$.  Then
$$
	\gamma_{V,W}^yf = 
	\gamma_{V,W}^y
	\gamma_{V\backslash\{v\},W}^yf
$$
holds by the same argument as in the proof of Proposition 
\ref{prop:cgibbs}.  Moreover, by the induction hypothesis, 
$(\gamma_{V\backslash\{v\},W}^yf)(x)$ is increasing in $x,y$,
and by construction $(\gamma_{V\backslash\{v\},W}^yf)(x)$ depends
only on $x_v$ and $x_{V^c},y$.  To show that
$(\gamma_{V,W}^yf)(x)$ is increasing in $x,y$, it thus suffices to 
show that $\gamma_{V,W}^y(x,A)$ is increasing in $x,y$ for 
$\mathbf{1}_A(x)=\mathbf{1}_{x_v=1}$.  But note that
$$
	\frac{\gamma^y_{V,W}(x,A)}{
	1-\gamma^y_{V,W}(x,A)} =
	e^{2\beta_v y_v}\,
	\frac{\gamma_{V,W\backslash\{v\}}^y(x,A)}{
	1-\gamma_{V,W\backslash\{v\}}^y(x,A)}.
$$
As $\gamma_{V,W\backslash\{v\}}^y(x,A)$ is increasing in $x,y$ by the 
induction hypothesis, $\gamma^y_{V,W}(x,A)$ is
also increasing in $x,y$.  Thus the induction step is established, and the 
proof is complete.
\end{proof}

A classical consequence of the monotonicity of $\gamma$ is that it implies 
that $\mathscr{G}(\gamma)$ has a maximal and a minimal element.  This 
substantially simplifies the characterization of uniqueness.  The 
following standard result (see, for example, \cite[section 4.3.3]{Bov06}) 
makes this idea precise.  In the sequel, we define the maximal and minimal 
configurations $\boldsymbol{+},\boldsymbol{-}\in \{-1,1\}^{\mathbb{Z}^d}$ 
as $\boldsymbol{+}_v=1$ and $\boldsymbol{-}_v=-1$ for all 
$v\in\mathbb{Z}^d$; moreover, a function 
$f:\{-1,1\}^{\mathbb{Z}^d}\to\mathbb{R}$ is said to be local if it 
depends only on a finite number of coordinates.

\begin{lem}
\label{lem:monogibbs}
Suppose that $\gamma$ is monotone.  There exist laws $\mathbf{P}_+,
\mathbf{P}_-$ in $\mathscr{G}(\gamma)$ such that
$$
	(\gamma_Vf)(\boldsymbol{-})\uparrow
	\mathbf{E}_-[f(X)]\quad
	\mbox{and}\quad
	(\gamma_Vf)(\boldsymbol{+})\downarrow
	\mathbf{E}_+[f(X)]\quad
	\mbox{as}\quad V\uparrow\mathbb{Z}^d
$$
for every local increasing function $f$.  Moreover,
$$
	\mathbf{E}_-[f(X)]\le\mathbf{E}[f(X)]\le\mathbf{E}_+[f(X)]
$$
for every $\mathbf{P}$ in $\mathscr{G}(\gamma)$ and local increasing 
function $f$.  In particular, $|\mathscr{G}(\gamma)|=1$ iff
$\mathbf{P}_+=\mathbf{P}_-$.
\end{lem}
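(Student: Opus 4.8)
The plan is to construct $\mathbf{P}_+$ and $\mathbf{P}_-$ as the weak limits of the boundary-condition measures $\gamma_V(\boldsymbol{+},\cdot\,)$ and $\gamma_V(\boldsymbol{-},\cdot\,)$ as $V\uparrow\mathbb{Z}^d$, and to extract every assertion of the lemma from the monotonicity of these nets. First I would fix a bounded increasing \emph{local} function $f$ and show that $V\mapsto(\gamma_Vf)(\boldsymbol{+})$ is nonincreasing along the directed family of finite sets, while $V\mapsto(\gamma_Vf)(\boldsymbol{-})$ is nondecreasing. Since both nets are bounded by $\|f\|_\infty$, monotone convergence then produces precisely the limits $\mathbf{E}_+[f]$ and $\mathbf{E}_-[f]$ appearing in the statement.

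The key step is this monotonicity in $V$, and it is where the two hypotheses on $\gamma$ combine. For $W\subseteq V$ the consistency property $\gamma_V\gamma_W=\gamma_V$ of Definition~\ref{defn:spec} gives $\gamma_Vf=\gamma_V(\gamma_Wf)$. By monotonicity of $\gamma$ (Definition~\ref{defn:mono}) the function $\gamma_Wf$ is increasing, hence $\gamma_Wf\le(\gamma_Wf)(\boldsymbol{+})$ pointwise; applying the probability kernel $\gamma_V$ and evaluating at $\boldsymbol{+}$ yields $(\gamma_Vf)(\boldsymbol{+})\le(\gamma_Wf)(\boldsymbol{+})$, which is exactly the desired monotonicity. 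The estimate at $\boldsymbol{-}$ is symmetric. I expect this interplay of consistency and monotonicity to be the only genuinely delicate point; the remaining arguments are routine.

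To pass from functionals to measures I would use that on a finite spin space every local function is a difference of two increasing local functions, so the convergence just established extends by linearity to all local functions, and then to all continuous functions by uniform density. As $\{-1,1\}^{\mathbb{Z}^d}$ is compact, this identifies $\mathbf{P}_\pm$ as the weak limits of $\gamma_{V_n}(\boldsymbol{\pm},\cdot\,)$ along any cofinal sequence $V_n\uparrow\mathbb{Z}^d$, with $\mathbf{E}_\pm[f]$ the advertised limits. To check $\mathbf{P}_\pm\in\mathscr{G}(\gamma)$ I would invoke the DLR characterization $\mathbf{P}\gamma_V=\mathbf{P}$: for $V\subseteq V_n$ consistency gives $\gamma_{V_n}(\boldsymbol{\pm},\cdot\,)\gamma_V=\gamma_{V_n}(\boldsymbol{\pm},\cdot\,)$, and since $\gamma_V$ sends continuous functions to continuous (indeed local) functions, letting $n\to\infty$ gives $\mathbf{P}_\pm\gamma_V=\mathbf{P}_\pm$.

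Finally, the domination and uniqueness criterion fall out directly. For any $\mathbf{P}\in\mathscr{G}(\gamma)$ and increasing local $f$, the DLR equation together with monotonicity gives $\mathbf{E}[f]=\mathbf{E}[\gamma_Vf]\le(\gamma_Vf)(\boldsymbol{+})$, and letting $V\uparrow\mathbb{Z}^d$ yields $\mathbf{E}[f]\le\mathbf{E}_+[f]$; the lower bound is symmetric. If $|\mathscr{G}(\gamma)|=1$ then trivially $\mathbf{P}_+=\mathbf{P}_-$, and conversely if $\mathbf{P}_+=\mathbf{P}_-$ the sandwich forces $\mathbf{E}[f]=\mathbf{E}_+[f]$ for every increasing local $f$, hence (using once more that such functions span the local functions and determine the measure) $\mathbf{P}=\mathbf{P}_+$ for every $\mathbf{P}\in\mathscr{G}(\gamma)$.
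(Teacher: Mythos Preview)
Your proposal is correct and follows essentially the same approach as the paper: both derive the monotonicity of $V\mapsto(\gamma_Vf)(\boldsymbol{\pm})$ from the consistency relation $\gamma_V\gamma_W=\gamma_V$ combined with monotonicity of $\gamma_Wf$, construct $\mathbf{P}_\pm$ as weak limits via compactness, verify membership in $\mathscr{G}(\gamma)$ by passing $\gamma_{V_n}\gamma_W=\gamma_{V_n}$ to the limit, and obtain the sandwich and uniqueness criterion from the fact that local increasing functions determine the measure. The only cosmetic difference is that you argue measure-determination via the observation that every local function is a difference of increasing local ones, whereas the paper invokes moment generating functions $e^{\sum_i\lambda_i x_{v_i}}$ with $\lambda_i\ge 0$; both arguments are valid and equally short.
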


\begin{proof}
Let $f$ be a bounded increasing function and let $W\subset 
V\subset\subset\mathbb{Z}^d$.  As $\gamma_Wf$ is increasing and
$\gamma_V=\gamma_V\gamma_W$, we readily obtain 
$\gamma_Wf(\boldsymbol{-})\le \gamma_Vf  \le\gamma_Wf(\boldsymbol{+})$.
Thus the net 
$(\gamma_Vf(\boldsymbol{-}))_{V\subset\subset\mathbb{Z}^d}$ is
increasing and the net
$(\gamma_Vf(\boldsymbol{+}))_{V\subset\subset\mathbb{Z}^d}$ is
decreasing.

Now note that $\{-1,1\}^{\mathbb{Z}^d}$ is compact and metrizable (for 
the product topology).  Thus 
$(\gamma_V(\boldsymbol{-},\,\cdot\,))_{V\subset\subset\mathbb{Z}^d}$ and
$(\gamma_V(\boldsymbol{+},\,\cdot\,))_{V\subset\subset\mathbb{Z}^d}$ are
precompact for the weak convergence topology.  In particular, we can 
choose a cofinal increasing sequence $V_n\subset\subset\mathbb{Z}^d$ such 
that $\gamma_{V_n}(\boldsymbol{-},\,\cdot\,)\to\mathbf{P}_-$ and
$\gamma_{V_n}(\boldsymbol{+},\,\cdot\,)\to\mathbf{P}_+$ weakly for some 
probability measures $\mathbf{P}_-$ and $\mathbf{P}_+$, respectively.
By the above monotonicity, it follows readily that 
$$
	\lim_{V\subset\subset\mathbb{Z}^d}(\gamma_Vf)(\boldsymbol{-})
	=\mathbf{E}_-[f(X)]\quad\mbox{and}\quad
	\lim_{V\subset\subset\mathbb{Z}^d}(\gamma_Vf)(\boldsymbol{+})
	=\mathbf{E}_+[f(X)]
$$
for every local increasing function $f$.  Moreover, for any measure 
$\mathbf{P}$ in $\mathscr{G}(\gamma)$, we have
$$
	\mathbf{E}_-[f(X)] =
	\lim_{V\subset\subset\mathbb{Z}^d}(\gamma_Vf)(\boldsymbol{-})
	\le
	\lim_{V\subset\subset\mathbb{Z}^d}\mathbf{E}[(\gamma_Vf)(X)]
	= \mathbf{E}[f(X)]
$$
for every local increasing function $f$, and similarly for the upper 
bound.

We now argue that $\mathbf{P}_-$ and $\mathbf{P}_+$ are in fact in 
$\mathscr{G}(\gamma)$.  To this end, fix any 
$W\subset\subset\mathbb{Z}^d$.  As $W\subset V_n$ for all $n$ sufficiently 
large (as $\{V_n\}$ is cofinal), and as $\gamma_Wf$ is local if $f$ is 
local (by the Markov property of the random field), we can write
$$
	\mathbf{E}_-[g(X_{W^c})\,(\gamma_Wf)(X)] =
	\lim_{n\to\infty}(\gamma_{V_n}\gamma_W fg)(\boldsymbol{-}) =
	\lim_{n\to\infty}(\gamma_{V_n}fg)(\boldsymbol{-}) =
	\mathbf{E}_-[g(X_{W^c})\,f(X)]
$$
for all local functions $f,g$.  Thus $\mathbf{E}_-[f(X)|X_{W^c}]=
(\gamma_Wf)(X)$ for all $W\subset\subset\mathbb{Z}^d$, which implies
that $\mathbf{P}_-$ is in $\mathscr{G}(\gamma)$.  The conclusion for
$\mathbf{P}_+$ follows identically.

Finally, we argue that $|\mathscr{G}(\gamma)|=1$ iff
$\mathbf{P}_+=\mathbf{P}_-$.  If $\mathbf{P}_+\ne\mathbf{P}_-$, then 
evidently $|\mathscr{G}(\gamma)|\ge 2$.  On the other hand, if 
$\mathbf{P}_+=\mathbf{P}_-$, then the expectation of every local 
increasing function must coincide for all $\gamma$-specified random 
fields.  But then all $\gamma$-specified random 
fields coincide, as the local increasing functions are 
measure-determining (the moment generating function 
$\mathbf{E}[e^{\lambda_1X_{v_1}+\cdots+\lambda_mX_{v_m}}]$
determines uniquely the joint law of $X_{v_1},\ldots,X_{v_m}$,
and $f(x_{v_1},\ldots,x_{v_m})=e^{\lambda_1x_{v_1}+\cdots+\lambda_mx_{v_m}}$
is local and increasing for every $\lambda_1,\ldots,\lambda_m\ge 0$).
\end{proof}

We call $\mathbf{P}_+$ and $\mathbf{P}_-$ the maximal and minimal 
elements of $\mathscr{G}(\gamma)$.  Now recall that if $\gamma$ is 
monotone, then so is $\gamma^y$.  Thus $\mathscr{G}(\gamma^y)$ also has a 
maximal and a minimal element. The key step in the proof of Theorem 
\ref{thm:mono} is the following observation due to F\"ollmer \cite{Fol80}.

\begin{lem}
\label{lem:foll}
Suppose that $\gamma$ is monotone.  Let $\mathbf{P}_+$ and 
$\mathbf{P}_-$ be the maximal and minimal element of 
$\mathscr{G}(\gamma)$, and let $\mathbf{P}^y_+$ and $\mathbf{P}^y_-$ be 
the maximal and minimal element of $\mathscr{G}(\gamma^y)$.
Then $\mathbf{P}_+^Y$ is a version of
$\mathbf{P}_+[X\in\,\cdot\,|Y]$ and $\mathbf{P}_-^Y$ is a version of
$\mathbf{P}_-[X\in\,\cdot\,|Y]$.
\end{lem}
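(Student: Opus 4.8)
The plan is to prove the statement for $\mathbf{P}_+$ (the case of $\mathbf{P}_-$ being entirely symmetric), by showing that $\mathbf{E}_+[f(X)\mid Y]=\mathbf{E}_+^Y[f]$ almost surely for every local increasing $f$. Since local increasing functions are measure-determining (as noted at the end of the proof of Lemma \ref{lem:monogibbs}) and a countable determining subfamily suffices, this identity for each such $f$ yields $\mathbf{P}_+^Y=\mathbf{P}_+[X\in\,\cdot\,\mid Y]$ a.s. Here $y\mapsto\mathbf{P}_+^y$ is a genuine measurable kernel, being the pointwise weak limit of $\gamma_{V_n}^y(\boldsymbol{+},\,\cdot\,)$ furnished by Lemma \ref{lem:monogibbs} applied to the monotone specification $\gamma^y$ (monotone by Lemma \ref{lem:inhmono}), so $\mathbf{P}_+^Y$ is a well-defined random field and the notion of ``version'' makes sense.

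One inequality is immediate. By Proposition \ref{prop:cgibbs}, $\mathbf{P}_+[X\in\,\cdot\,\mid Y]$ lies in $\mathscr{G}(\gamma^Y)$ a.s.; since $\mathbf{P}_+^Y$ is the maximal element of $\mathscr{G}(\gamma^Y)$, the domination in Lemma \ref{lem:monogibbs} gives $\mathbf{E}_+[f(X)\mid Y]\le\mathbf{E}_+^Y[f]$ a.s. for local increasing $f$. It therefore suffices to prove the reverse inequality after integration, namely $\mathbf{E}_+[\mathbf{E}_+^Y[f]]\le\mathbf{E}_+[f(X)]$; combined with the pointwise bound and the identity $\mathbf{E}_+[\mathbf{E}_+[f(X)\mid Y]]=\mathbf{E}_+[f(X)]$, this forces a.s.\ equality (equal integrals of comparable quantities).

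To obtain the integrated bound I would approximate by finite volumes. Writing $h_n(y):=(\gamma_{V_n}^y f)(\boldsymbol{+})$ along a cofinal sequence $V_n\uparrow\mathbb{Z}^d$, Lemma \ref{lem:monogibbs} applied to $\gamma^y$ gives $h_n(y)\downarrow\mathbf{E}_+^y[f]$, while Lemma \ref{lem:inhmono} shows each $h_n$ is an increasing function of $y$. Let $\mu_n$ denote the joint law of $(X,Y)$ with $X\sim\gamma_{V_n}(\boldsymbol{+},\,\cdot\,)$ and $Y$ generated by the observation kernel. A direct Bayes computation (the exterior observations $Y_{V_n^c}$ being uninformative, since $X_{V_n^c}=\boldsymbol{+}$ is frozen, so their densities cancel) identifies $h_n(y)$ with $\mathbf{E}_{\mu_n}[f(X)\mid Y=y]$, whence $\mathbf{E}_{\mu_n}[h_n(Y)]=\mathbf{E}_{\mu_n}[f(X)]=(\gamma_{V_n}f)(\boldsymbol{+})$. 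The key monotonicity input is twofold: Lemma \ref{lem:monogibbs} gives $\gamma_{V_n}(\boldsymbol{+},\,\cdot\,)\succeq\mathbf{P}_+$ in stochastic order, and the observation kernel $Y_v=X_v\xi_v$ with $p_v\le\tfrac12$ is stochastically monotone in $x$, as $\mathbf{P}[Y_v=1\mid X_v=1]=1-p_v\ge p_v=\mathbf{P}[Y_v=1\mid X_v=-1]$. Consequently the $Y$-marginal under $\mu_n$ dominates that under the $\mathbf{P}_+$-joint law, so for the increasing function $h_n$ one gets $\mathbf{E}_+[h_n(Y)]\le\mathbf{E}_{\mu_n}[h_n(Y)]=(\gamma_{V_n}f)(\boldsymbol{+})$. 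Letting $n\to\infty$ and using monotone convergence on the left and $(\gamma_{V_n}f)(\boldsymbol{+})\downarrow\mathbf{E}_+[f(X)]$ on the right yields precisely $\mathbf{E}_+[\mathbf{E}_+^Y[f]]\le\mathbf{E}_+[f(X)]$.

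The main obstacle is exactly this interchange of the infinite-volume limit with the conditioning: the functions $h_n$ depend on the growing coordinate set $Y_{V_n}$, so weak convergence of $\mu_n$ cannot be applied to them directly. The device that resolves it is the stochastic-domination sandwich above, which converts the non-local comparison into a monotone one and is the only place where both the monotonicity of $\gamma$ (through $\gamma_{V_n}(\boldsymbol{+},\,\cdot\,)\succeq\mathbf{P}_+$) and the restriction $p_v\le\tfrac12$ (through monotonicity of the observation kernel) are essential. The minimal-element statement follows by the same argument with all inequalities and extremal configurations reversed.
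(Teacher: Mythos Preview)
Your proof is correct and rests on the same ingredients as the paper's: the finite-volume approximation $(\gamma_V^y f)(\boldsymbol{+})\downarrow\mathbf{E}_+^y[f]$, the monotonicity of $y\mapsto(\gamma_V^y f)(\boldsymbol{+})$ from Lemma~\ref{lem:inhmono}, the monotonicity of the observation kernel (via $p_v\le\tfrac12$), and the domination $\gamma_V(\boldsymbol{+},\cdot)\succeq\mathbf{P}_+$ from Lemma~\ref{lem:monogibbs}. The paper packages these slightly differently by verifying the conditional-expectation identity $\mathbf{E}_+[\mathbf{E}^Y_+[f]\,g(Y)]=\mathbf{E}_+[f(X)g(Y)]$ directly for all local increasing $f,g$ via a double infimum and an explicit Bayes computation on the diagonal $W=V$; your device of obtaining one inequality for free from maximality of $\mathbf{P}_+^Y$ in $\mathscr{G}(\gamma^Y)$ and then proving only the integrated reverse inequality is a mild but pleasant streamlining of that same computation.
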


\begin{proof}
Let us prove the result for $\mathbf{P}_+$; the conclusion for 
$\mathbf{P}_-$ follows in the same manner.  Let $f,g$ be local increasing 
functions.  It suffices to show that
$$
	\mathbf{E}_+[\mathbf{E}^Y_+[f(X)]\,g(Y)]=
	\mathbf{E}_+[f(X)g(Y)]
$$
for all local increasing functions $f,g$.  By Lemma \ref{lem:monogibbs}
$$
	\mathbf{E}_+[\mathbf{E}^Y_+[f(X)]\,g(Y)] =
	\inf_{V\subset\subset\mathbb{Z}^d}
	\mathbf{E}_+[(\gamma_V^Yf)(\boldsymbol{+})\,g(Y)].
$$
Now note that $(\gamma_V^yf)(\boldsymbol{+})$ is a local increasing 
function of $y$ by Lemma \ref{lem:inhmono}.  It is easily verified (as
$p_v\le \frac{1}{2}$ for all $v$) that 
$\mathbf{E}_+[h(Y_V)|X]=\mathbf{E}_+[h(Y_V)|X_V]$ is an increasing 
function of $X_V$ for every increasing function $h$.  Thus we obtain 
using Lemma \ref{lem:monogibbs}
\begin{align*}
	\mathbf{E}_+[\mathbf{E}^Y_+[f(X)]\,g(Y)] &=
	\inf_{V\subset\subset\mathbb{Z}^d}
	\mathbf{E}_+[\mathbf{E}_+[(\gamma_V^{Y}f)(\boldsymbol{+})
	\,g(Y)|X]] 
	\\ &=
	\inf_{V\subset\subset\mathbb{Z}^d}
	\inf_{W\subset\subset\mathbb{Z}^d}
	\int \mathbf{E}_+[(\gamma_V^{Y}f)(\boldsymbol{+})
	\,g(Y)|X=x]\,\gamma_W(\boldsymbol{+},dx)
	\\ &=
	\inf_{V\subset\subset\mathbb{Z}^d}
	\int \mathbf{E}_+[(\gamma_V^{Y}f)(\boldsymbol{+})
	\,g(Y)|X=x]\,\gamma_V(\boldsymbol{+},dx),
\end{align*}
where the last equality follows as the quantity inside the infimum is 
decreasing in both $V$ and $W$.  But note that we have for every $V$ 
sufficiently large that $g(y)=g(y_V)$
\begin{align*}
        &\int \mathbf{E}_+[(\gamma_V^{Y}f)(\boldsymbol{+})
        \,g(Y)|X=x]\,\gamma_V(\boldsymbol{+},dx) \\ &=
	\sum_{y\in\{-1,+1\}^V} (\gamma_V^{y}f)(\boldsymbol{+})
	\,g(y)\int\prod_{v\in V}g(x_v,y_v)
	\,\gamma_V(\boldsymbol{+},dx) \\ &=
	\int \sum_{y\in\{-1,+1\}^V} f(x)\,g(y)\prod_{v\in V}g(x_v,y_v)
	\,\gamma_V(\boldsymbol{+},dx) \\ &=
	\int f(x)\,\mathbf{E}_+[g(Y)|X=x]\,\gamma_V(\boldsymbol{+},dx).
\end{align*}
Taking the infimum over $V$, it follows that
$$
	\mathbf{E}_+[\mathbf{E}^Y_+[f(X)]\,g(Y)] =
	\mathbf{E}_+[f(X)\,\mathbf{E}_+[g(Y)|X]] =
	\mathbf{E}_+[f(X)g(Y)],
$$
and the proof is complete.
\end{proof}

We can now easily complete the proof of Theorem \ref{thm:mono}.

\begin{proof}[Proof of Theorem \ref{thm:mono}]
By Lemma \ref{lem:monogibbs}, $|\mathscr{G}(\gamma)|=1$
implies $\mathbf{P}_+=\mathbf{P}_-$.  But then
$$
	\mathbf{P}^Y_+ = \mathbf{P}_+[X\in\,\cdot\,|Y] =
	\mathbf{P}_-[X\in\,\cdot\,|Y] = \mathbf{P}^Y_-
	\quad\mbox{a.s.}
$$
by Lemma \ref{lem:foll}.
Thus $|\mathscr{G}(\gamma^Y)|=1$ a.s.\ by Lemma \ref{lem:monogibbs}.
\end{proof}

\begin{rem}
\label{rem:extmono}
Even when the underlying field is not unique, that is, when 
$\mathbf{P}_+\ne\mathbf{P}_-$, Lemma \ref{lem:foll} immediately shows that 
the maximal and minimal models $\mathbf{P}_+$ and $\mathbf{P}_-$ are both 
conditionally mixing.  Indeed, Lemma \ref{lem:foll} implies that the 
conditional distributions $\mathbf{P}_+[X\in\,\cdot\,|Y]$ and 
$\mathbf{P}_-[X\in\,\cdot\,|Y]$ are maximal and minimal in 
$\mathscr{G}(\gamma^Y)$ a.s., and must therefore be extremal a.s.\ (so 
that conditional mixing follows by Proposition \ref{prop:cgibbs}).

Lemma \ref{lem:monogibbs} might lead one to expect that if $\gamma$ is 
monotone, then $\mathbf{P}_+$ and $\mathbf{P}_-$ are the only extremal 
random fields in $\mathscr{G}(\gamma)$.  If a monotone specification 
$\gamma$ satisfies this property, then we have shown that mixing implies 
conditional mixing, that is, we have resolved Conjecture \ref{conj:rfobs} 
for monotone random fields.  However, it turns out that there may exist 
extremal random fields defined by monotone specifications that differ from 
$\mathbf{P}_+,\mathbf{P}_-$.  For example, for the ferromagnetic Ising 
model, the situation is as follows.  On the two-dimensional lattice 
$\mathbb{Z}^2$, the maximal and minimal fields $\mathbf{P}_+,\mathbf{P}_-$ 
are the only extremal fields.  However, in higher dimensions 
$\mathbb{Z}^d$, $d\ge 3$ there exist extremal fields other than 
$\mathbf{P}_+,\mathbf{P}_-$.  On the other hand, in any dimension, 
$\mathbf{P}_+,\mathbf{P}_-$ are the only \emph{translation-invariant} 
extremal fields.  These are highly nontrivial results, and we refer to 
\cite{CV12} and the references therein for further details.  By combining 
these facts, we can resolve several variants of Conjecture 
\ref{conj:rfobs} for the ferromagnetic Ising model.  However, the 
situation for general monotone fields remains unclear.
\end{rem}

\section{Discussion}
\label{sec:disc}

Our main aim in this paper has been to draw attention to the existence of 
surprising probabilistic phenomena in nonlinear filtering that arise in 
infinite dimension, and that are fundamentally different in nature from 
the problems that have been investigated in filtering theory to date.  In 
particular, we have exhibited the existence of conditional phase 
transitions, and we made some first steps toward a general theory. It is 
only fair to emphasize, however, that none of our general results are 
entirely satisfactory, and it appears in particular that we are far from 
understanding the issues surrounding Conjectures \ref{conj:obs} and 
\ref{conj:rfobs} at any level of generality.  It is likely that new ideas 
are needed to develop a deeper understanding of these questions.

Throughout this paper, we have repeatedly exploited connections between 
filtering in infinite dimension and problems in statistical mechanics and 
in multidimensional ergodic theory.  In this final section, we will 
briefly discuss some further connections with other probabilistic problems 
that did not arise in the previous sections.

\subsection{Measure-theoretic identities and conditional ergodicity}
\label{sec:meas}

In this paper we have formulated the filter stability problem as one of 
inheritance of ergodicity.  It is insightful, however, to consider the 
problem from a different perspective: following Kunita \cite{Kun71}, the 
filter stability problem can be equivalently rephrased in terms of the 
validity of a simple \emph{measure-theoretic} identity (see \cite{vH12} 
for further discussion and references).  In view of this connection, the 
counterexamples given above provide some new insight into the failure of 
such measure-theoretic identities.  

We discuss this issue briefly here.  Let us begin by rephrasing the filter 
stability property.  Let $(X_k,Y_k)_{k\in\mathbb{Z}}$ be a stationary 
hidden Markov model as in section \ref{sec:filt}.  Then
\begin{align*}
	&\mathbf{E}|\mathbf{P}[X_k\in A|X_0,Y_1,\ldots,Y_k]-
	\mathbf{P}[X_k\in A|Y_1,\ldots,Y_k]| = \mbox{} 
	&\mbox{(stationarity)} \\
	&\mathbf{E}|\mathbf{P}[X_0\in A|X_{-k},Y_{-k+1},\ldots,Y_0]
	-\mathbf{P}[X_0\in A|Y_{-k+1},\ldots,Y_0]| = \mbox{}
	&\mbox{(Markov property)} \\
	&\mathbf{E}|\mathbf{P}[X_0\in A|
	\mathcal{X}_{-\infty}^{-k}\vee\mathcal{Y}_{-\infty}^{0}]-
	\mathbf{P}[X_0\in A|\mathcal{Y}_{-k+1}^{0}]|
	\xrightarrow{k\to\infty} \mbox{} 
	&\mbox{(martingale cvg.)} \\
	&\mathbf{E}|\mathbf{P}[X_0\in A|
	\textstyle{\bigcap_{k}}(\mathcal{X}_{-\infty}^{-k}\vee\mathcal{Y}_{-\infty}^{0})
	]-\mathbf{P}[X_0\in A|\mathcal{Y}_{-\infty}^{0}]|,
\end{align*}
where we define here and below the $\sigma$-fields
$$
	\mathcal{X}_{m}^{n}:=\sigma\{X_m,\ldots,X_n\},\qquad\quad
	\mathcal{Y}_{m}^{n}:=\sigma\{Y_m,\ldots,Y_n\}.
$$
Thus the filter is stable if and only if
$$
	\mathbf{P}[X_0\in\cdot\,|
	\textstyle{\bigcap_{k}}(\mathcal{X}_{-\infty}^{-k}\vee\mathcal{Y}_{-\infty}^{0})
	]=\mathbf{P}[X_0\in\cdot\,|\mathcal{Y}_{-\infty}^{0}].
$$
In particular, validity of the measure-theoretic
identity
$$
	\bigcap_{k}(\mathcal{X}_{-\infty}^{-k}\vee\mathcal{Y}_{-\infty}^{0}) =
	\mathcal{Y}_{-\infty}^{0}\quad\mathop{\mathrm{mod}}\mathbf{P}
$$
is sufficient for filter stability. In many cases, this identity can also 
be shown to be necessary \cite{vH12}.
In precisely the same manner, it is not difficult to show that
the unobserved Markov chain $(X_k)_{k\in\mathbb{Z}}$ is stable if and only 
if the tail $\sigma$-field
$$
	\bigcap_k\mathcal{X}_{-\infty}^{-k}\quad
	\mbox{is }\mathbf{P}\mbox{-trivial}.
$$
Thus we can formulate a measure-theoretic version of the filter 
stability problem:
\begin{quote}
\emph{When does $\mathbf{P}$-triviality of
$\bigcap_{k}\mathcal{X}_{-\infty}^{-k}$ imply
$\bigcap_{k}(\mathcal{X}_{-\infty}^{-k}\vee\mathcal{Y}_{-\infty}^{0}) = 
\mathcal{Y}_{-\infty}^{0}$ $\mathrm{mod}\,\mathbf{P}$?}
\end{quote}
It is tempting to conclude that this is always the case: as 
$\mathcal{Y}_{-\infty}^{0}$ does not depend on $k$, one would expect that
$\bigcap_{k}(\mathcal{X}_{-\infty}^{-k}\vee\mathcal{Y}_{-\infty}^{0})
\,\stackrel{?}{=}\,
(\bigcap_{k}\mathcal{X}_{-\infty}^{-k})\vee\mathcal{Y}_{-\infty}^{0}$ 
and thus the filter stability property would automatically follow from 
stability of the underlying model.  This (incorrect) reasoning was used
by Kunita \cite{Kun71} in the proof of his main result.  The conclusion 
is already
contradicted, however, by Example \ref{ex:bla}!  The fundamental issue
is that \emph{the exchange of intersection $\cap$ and 
supremum $\vee$ of $\sigma$-fields is not permitted} in general.

An entirely analogous measure-theoretic formulation appears in the random 
field setting of section \ref{sec:condrf}.  Indeed, let 
$(X_v,Y_v)_{v\in\mathbb{Z}^d}$ be a partially observed random field model 
as in section \ref{sec:crf}, and define 
$\mathcal{X}_V:=\sigma\{X_v:v\in V\}$ and
$\mathcal{Y}_V:=\sigma\{Y_v:v\in V\}$
for $V\subseteq\mathbb{Z}^d$.  Then the question whether mixing 
implies conditional mixing can be phrased as:
\begin{quote}
\emph{When does
$$
	\bigcap_{V\subset\subset\mathbb{Z}^d}
	\mathcal{X}_{\mathbb{Z}^d\backslash V}
	\mbox{ is }\mathbf{P}\mbox{-trivial}
$$
imply
$$
	\bigcap_{V\subset\subset\mathbb{Z}^d}
	(\mathcal{X}_{\mathbb{Z}^d\backslash V}
	\vee\mathcal{Y}_{\mathbb{Z}^d}) = 
	\mathcal{Y}_{\mathbb{Z}^d}
	~\mathrm{mod}\,\mathbf{P}~?
$$}
\end{quote}
Once again, Example \ref{ex:bla} already shows this is not always the
case (even for $d=1$).

Establishing the validity of the exchange of intersection and supremum of 
$\sigma$-fields is a measure-theoretic conundrum that poses a tantalizing 
problem in a diverse range of probabilistic questions \cite{vW83,vH12}.  
As this problem evidently lies at the heart of the ergodic theory of 
nonlinear filters, it is interesting to view results in this area from a 
measure-theoretic perspective.  For example, as the inheritance of 
ergodicity has been established under nondegeneracy of the observations 
(Theorem \ref{thm:vH09c} and generalizations developed in \cite{TvH13}), 
one might ask whether this result is a manifestation of a more general 
measure-theoretic principle that enables the exchange of intersection and 
supremum of $\sigma$-fields.  Such a hope could be justified as follows.  
If $\mathcal{G}_n$ is a decreasing filtration and $\mathcal{F}$ is a 
$\sigma$-field, it is easily established that 
$\bigcap_n(\mathcal{G}_n\vee\mathcal{F})=(\bigcap_n\mathcal{G}_n) 
\vee\mathcal{F}$ $\mathrm{mod}\,\mathbf{P}$ when $\mathcal{G}_0$ and 
$\mathcal{F}$ are independent.  Using the Bayes formula, one can readily 
deduce that the conclusion still follows if $\mathbf{P}\ll\mathbf{Q}$ such 
that $\mathcal{G}_0$ and $\mathcal{F}$ are independent under $\mathbf{Q}$.  
The nondegeneracy assumption, however, implies a weaker property: it 
implies that the laws of $(X_k)_{|k|\le K}$ and $(Y_k)_{|k|\le K}$ can be 
made independent by an equivalent change of measure for every 
\emph{finite} $K$ (but not for infinite $K$ as would be required to apply 
the above principle).  Thus Theorem \ref{thm:vH09c} might suggest the 
existence of a more general \emph{local} form of the above 
measure-theoretic principle in the setting of stationary processes.

Unfortunately, this hope proves to be unfounded: a counterexample in 
\cite{vH12} shows that the exchange of intersection and supremum may fail 
even when the observations are nondegenerate (note that this does not 
contradict Theorem \ref{thm:vH09c}, as its uniform ergodicity assumption 
is strictly stronger than the stability property: thus the stronger 
hypothesis is essential to utilize nondegeneracy). However, the 
counterexample of \cite{vH12} is decidedly esoteric.  In contrast, the 
results of the present paper show that the validity of the exchange of 
intersection and supremum of $\sigma$-fields can fail in a very robust 
manner in infinite dimension.  For example, Example \ref{ex:rfbla} defines 
an almost trivial model of a stationary and ergodic random field with 
values in a finite set for which all finite-dimensional marginals admit a 
density that is bounded away from zero, while the validity of exchange of 
intersection and supremum of $\sigma$-fields exhibits a phase transition.  
These new measure-theoretic examples illustrate the highly nontrivial 
nature of the exchange of intersection and supremum problem even in very 
regular models.

\subsection{Spin glasses and conditional Gibbs measures}

We have seen in section \ref{sec:crf} that when a Markov random field is 
conditioned on partial observations, the conditional distribution again 
defines a Markov random field, albeit with a \emph{random} specification 
(through dependence on the observations whose realization is random).  
This idea has played a central 
role throughout this paper, both in section \ref{sec:condrf} where the 
objects of interest were the conditional random fields themselves, and in 
sections \ref{sec:phtrans} and \ref{sec:obs} where the space-time random 
field was a basic tool in the investigation of the corresponding filtering 
problems in infinite dimension.

Random fields whose specification depends on external random variables 
have been widely studied in the statistical mechanics of disordered 
systems.  The randomness, which is typically assumed to be i.i.d.\ across 
sites, is used here to model the irregular spatial distribution of 
impurities in materials such as magnetic alloys.  Such disordered 
materials, known as \emph{spin glasses}, exhibit many remarkable 
properties and complex behavior; we refer to \cite{Bov06} for an 
introduction to the mathematical study of spin glasses.  One of the chief 
difficulties in spin glasses is that the random interactions in the system 
do not favor a particular alignment of the configurations at different 
sites, a phenomenon known as `frustration'.  This creates a very 
complicated energy landscape that gives rise to many unusual properties of 
spin glasses, with a large number of mathematical mysteries remaining 
to be understood (see, e.g., \cite[section 6.5]{Bov06}).

In a sense, our conditional random fields could be interpreted as a form 
of spin glass models.  For example, if the underlying field $X$ is an 
Ising model (cf.\ Example \ref{ex:ising}), then the conditional random 
field in Example \ref{ex:rfbla} can be viewed as a form of \emph{random 
bond Ising model} or \emph{Ising spin glass}, while the setting of 
Conjecture \ref{conj:rfobs} leads to a form of \emph{random field Ising 
model}, cf.\ \cite{Bov06,GHM01}.  There is an essential difference, 
however, with the usual definition of a spin glass: the random variables 
that enter the conditional specification are not defined by an exogenous 
i.i.d.\ disorder, but rather by observations whose law is generated by the 
underlying random field.  Thus the statistics of the randomness is 
perfectly matched to the underlying model.  In particular, the conditional 
distribution should favor the same configurations as the underlying model, 
so that one might expect that the effect of frustration is mitigated. It 
is not clear, however, how such ideas could be made precise.  More 
generally, this highlights the fundamental challenge in investigating 
conditional random field problems from the point of view of spin glasses: 
while conditional random fields are expected to exhibit much `nicer' 
behavior than general spin glasses due to the precise matching of the 
randomness to the model, it is entirely unclear how the latter can be 
taken into account in the analysis of such problems.

\begin{rem}
It should be noted that the above intuition for the special nature of 
the problems considered in this paper fails to hold if we were to consider 
model misspecification.  For example, if the conditional random field in 
Example \ref{ex:rfbla} is computed assuming an error probability $p$ that 
is misspecified with respect to the true error probability of the 
underlying observations, then there is no reason to expect improved 
behavior with respect to general spin glasses.  The misspecified setting 
is completely unnatural from the point of view of conditional 
distributions: if $\mathbf{P}_p$ denotes the law of the model with error 
probability $p$, then $\mathbf{P}_p[X\in\cdot\,|Y]$ is not even defined 
under $\mathbf{P}_q$ for $p\ne q$ as then $\mathbf{P}_p\perp\mathbf{P}_q$.  
Such problems are therefore of a fundamentally different nature than those 
considered in this paper.  On the other hand, one can never expect to have 
perfect knowledge of the underlying model in real-world applications, so 
that the present example highlights the need for caution in the practical 
interpretation of our results.
\end{rem}

\begin{rem}
\label{rem:nishi}
Unlike in most spin glass models, the observations that enter the 
specification of the conditional random field fail to be independent in 
almost any model of interest.  However, in models that possess certain 
symmetries, such as Example \ref{ex:rfbla}, it is possible to transform 
the conditional random field to a spin glass model with i.i.d.\ randomness 
by means of a gauge transformation.  This fact was exploited implicitly in 
the proofs in section \ref{sec:phtrans} to analyze the phase transition 
behavior in this model.  The converse observation has been noted in the 
physics literature: in spin glasses that possess gauge 
symmetries, it is possible to make a special choice of the randomness so 
that various quantities become explicitly computable.  This restricts 
consideration to a distinguished subset of the parameter space of the 
model called the \emph{Nishimori line}, which corresponds precisely to the 
situation where the model describes a conditional distribution (after 
gauge transformation).  We refer to \cite{Nis07} and the references 
therein.
\end{rem}

An entirely different manner in which conditional random fields arise in 
statistical mechanics is in the study of renormalization group 
transformations.  In section \ref{sec:condrf} we have investigated Markov 
random fields where $\mathbf{P}[X_V\in\cdot|X_{V^c}]$ depends on 
$X_{\partial V}$ only.  In renormalization group analyses, one aims to 
perform computations at the level of a transformation $Y$ of the random 
field $X$ obtained, for example, by working on a sublattice. In general, 
such a transformation is no longer Markov, that is, 
$\mathbf{P}[Y_W\in\cdot|Y_{W^c}]$ depends on all sites outside 
$W\subset\subset\mathbb{Z}^d$.  However, one still expects that the 
resulting model is \emph{local} in the sense that 
$\mathbf{P}[Y_W\in\cdot|Y_{W^c}]$ only depends significantly on $Y_w$ for 
$w$ in some finite neighborhood of $W$.  This idea is formalized by the 
requirement that the functions 
$y\mapsto\mathbf{P}[Y_W\in A|Y_{W^c}=y_{W^c}]$ are quasilocal, that is, 
uniform limits of local functions, for every 
$W\subset\subset\mathbb{Z}^d$.  Such random fields are called 
\emph{Gibbsian}.  In order for the renormalization procedure to be well 
defined, one therefore faces the question whether a transformation 
$Y$ of a Markov random field $X$ is guaranteed to be Gibbsian.

Surprisingly, this natural property proves to be false in general: it may 
be that $Y$ fails to be Gibbsian even when $X$ is Markov.  Such 
\emph{renormalization group pathologies} are discussed at great length in 
the paper \cite{EFS93}.  This issue proves to be closely connected, at 
least in monotone fields, with the uniqueness property of the conditional 
random field $\mathbf{P}[X\in\cdot|Y]$, and thus with the type of 
phenomena investigated in section \ref{sec:condrf}.  We refer for further 
details to the paper \cite{FP97}, which also develops the connection with 
global Markov property \cite{Gol80, Fol80} (already mentioned in the proof 
of Theorem \ref{thm:mono}).

\subsection{Trees and other graphs}
\label{sec:graphs}

Throughout this paper, we have considered spatial variables indexed by the 
lattice $\mathbb{Z}^d$.  However, both infinite-dimensional filtering 
models and partially observed Markov random fields can be meaningfully 
formulated on any infinite graph of finite degree $G=(V,\mathcal{E})$.  
While the lattice setting is a natural prototype for the investigation of 
infinite-dimensional systems, the consideration of other graphical 
structures could provide complementary insights on the phenomena 
considered in this paper.

Of particular interest is the case where the spatial graph $G$ is a tree. 
Such models are motivated by applications in information theory, 
statistical physics, and mathematical biology, cf.\ \cite{EKPS00} and the 
references therein.  More importantly, trees play a special role in the 
theory of Markov random fields, as many problems that are exceedingly 
difficult in general graphs are amenable to exact computations on a tree 
by means of recursive formulas (see, for example, \cite[Chapter 
12]{Geo11}).  It is therefore possible that significant insight could be 
obtained on conjectures that arise from this paper by investigating these 
on trees using the special tools that are available in this setting.  We 
have not systematically pursued this avenue of investigation.  
Nonetheless, the statistical mechanics literature on spin glass models on 
Bethe lattices (infinite regular trees in which every vertex has exactly 
$q\ge 3$ neighbors) already provides some suggestive evidence towards 
questions that arise from this paper, as we will now briefly discuss.

\begin{enumerate}
\item The main conjecture of this paper is that stability properties are 
preserved by conditioning in the absence of symmetries.  However, the 
theory of section \ref{sec:obs} raises the question whether stability 
of the underlying model even plays a role in this setting: it is possible 
that the stability of conditional distributions follows already from the 
symmetry-breaking assumption alone even when the underlying model is not 
stable.  On the other hand, the theory of section \ref{sec:mono} does 
require stability of the model, suggesting that the latter 
should play a role after all.

Some insight into this question can be obtained from results 
in \cite{BRZ98} on the random field Ising model on the Bethe lattice, 
which can be interpreted as the conditional random field for the variant 
of the model of section \ref{sec:mono} where the underlying field $X$ is a 
ferromagnetic Ising model on the Bethe lattice.  It is shown in 
\cite{BRZ98} that if the underlying field $X$ is not unique and the error 
probability $p$ is sufficiently close to $\frac{1}{2}$, then the 
conditional random field is also not unique for any realization of the 
observations.  Thus the conclusion of Theorem \ref{thm:mono} fails in this 
case, even though the symmetry-breaking condition holds here and the model 
is monotone and translation invariant (in the sense appropriate to the 
Bethe lattice).  Thus, at least in the setting of trees, the 
symmetry-breaking property alone is not sufficient to ensure the 
inheritance of ergodicity in the sense of uniform mixing. This provides 
some evidence towards the necessity of the stability assumption in our 
conjectures. 

\item Section \ref{sec:condrf} introduced two different notions of 
conditional ergodicity for Markov random fields: uniform mixing 
(uniqueness) and mixing (extremality) of the conditional random field.  
It is not clear whether inheritance of the mixing property is connected to 
inheritance of the uniform mixing property: for example, is it possible 
for the conditional distribution of a uniformly mixing random field to be 
mixing but not uniformly mixing?  While we do not have any concrete 
evidence in the present setting, related results on the ferromagnetic 
Ising model on the Bethe lattice \cite{BRZ95} show that it is indeed 
possible, albeit in a different setting, that the uniqueness and 
extremality properties appear at different phase transition points.

\item The conditional random field for the analogue of Example 
\ref{ex:rfbla} on the Bethe lattice can be viewed as a random bond Ising 
model on the Bethe lattice.  This model has been systematically 
investigated in the statistical mechanics literature 
\cite{CCST86,CCST90a,CCST90b}, which leads to a detailed description of 
the phase diagram and, in particular, the exact location of the phase 
transition point on the Nishimori line (cf.\ Remark \ref{rem:nishi}).
\end{enumerate}

Another widely studied class of models in statistical mechanics are 
mean-field models, where the spatial graph is chosen to be the complete 
graph on a finite number $n$ of spatial locations (that is, every pair of 
sites interacts with each other).  The large-scale properties of such 
models do not arise at the level of a fixed graph, but rather in the limit 
as $n\to\infty$ where the strength of the interactions are scaled down 
appropriately to obtain nontrivial limiting behavior.  Mean-field 
analogues of Example \ref{ex:rfbla} arise naturally in applications in 
statistics and computer science \cite{MNS12} and in information theory 
\cite{MM09,MA13}. However, it is arguably less surprising that nontrivial 
phase transition phenomena arise in this setting, as these problems are 
concerned with the limiting behavior of a sequence of conditional 
distributions where the underlying model is scaled in a nontrivial manner.  
In contrast, in the problems we have investigated in this paper, the 
emergent phase transition phenomena arise from conditioning alone.

\subsection{Stability in time vs.\ spatial mixing}

We have discussed in this paper two different questions of inheritance of 
ergodicity: the stability property in time of dynamical filtering models, 
and the mixing property in space in the setting of static random fields.  
While we have utilized the notion of a space-time random field even in the 
dynamical setting, it is not at all clear to what extent the problems of 
inheritance of spatial and temporal ergodicity are related to one another.  
In particular, we do not know whether it is possible that there exist 
filtering models for which the filter is stable, but does not also exhibit 
a spatial mixing property.  From the practical perspective, the design of 
filtering algorithms in high dimension relies crucially both on the 
temporal stability and spatial mixing of the filter \cite{RvH13}.

Connections between spatial and temporal mixing properties arise 
frequently in the ergodic theory of interacting Markov chains.  For 
example, \cite{GKLM89} shows a correspondence in the translation-invariant 
setting between invariant measures of an infinite-dimensional Markov chain 
and random fields corresponding to the specification of the space-time 
system (related ideas in the context of the global Markov property 
can be found in \cite{GKS90, Fol88}).  Similarly, connections between 
exponential mixing in space and time are well known, see for example 
\cite{Lou04} and the references therein.  We do not know, however, whether 
similar connections are likely to arise for conditional distributions.  
Let us emphasize in this context that the arguments that were used in this 
paper to obtain filter stability (section \ref{sec:obs}) and conditional 
mixing (section \ref{sec:condrf}) are of a very different nature, but this
is likely to be a limitation of our analysis rather than a genuine 
phenomenon.


\begin{thebibliography}{10}

\bibitem{Bla57}
David Blackwell, \emph{The entropy of functions of finite-state {M}arkov
  chains}, Transactions of the first {P}rague conference on information theory,
  {S}tatistical decision functions, random processes held at {L}iblice near
  {P}rague from {N}ovember 28 to 30, 1956, Publishing House of the Czechoslovak
  Academy of Sciences, Prague, 1957, pp.~13--20. \MR{0100297 (20 \#6730)}

\bibitem{BD62}
David Blackwell and Lester Dubins, \emph{Merging of opinions with increasing
  information}, Ann. Math. Statist. \textbf{33} (1962), 882--886. \MR{0149577
  (26 \#7062)}

\bibitem{BRZ95}
P.~M. Bleher, J.~Ruiz, and V.~A. Zagrebnov, \emph{On the purity of the limiting
  {G}ibbs state for the {I}sing model on the {B}ethe lattice}, J. Statist.
  Phys. \textbf{79} (1995), no.~1-2, 473--482. \MR{1325591 (96d:82009)}

\bibitem{BRZ98}
\bysame, \emph{On the phase diagram of the random field {I}sing model on the
  {B}ethe lattice}, J. Statist. Phys. \textbf{93} (1998), no.~1-2, 33--78.
  \MR{1656364 (99h:82043)}

\bibitem{Bov06}
Anton Bovier, \emph{Statistical mechanics of disordered systems}, Cambridge
  Series in Statistical and Probabilistic Mathematics, Cambridge University
  Press, Cambridge, 2006, A mathematical perspective. \MR{2252929
  (2007g:82028)}

\bibitem{CCST90a}
J.~M. Carlson, J.~T. Chayes, L.~Chayes, J.~P. Sethna, and D.~J. Thouless,
  \emph{Bethe lattice spin glass: the effects of a ferromagnetic bias and
  external fields. {I}. {B}ifurcation analysis}, J. Statist. Phys. \textbf{61}
  (1990), no.~5-6, 987--1067. \MR{1083895 (92d:82050)}

\bibitem{CCST90b}
J.~M. Carlson, J.~T. Chayes, J.~P. Sethna, and D.~J. Thouless, \emph{Bethe
  lattice spin glass: the effects of a ferromagnetic bias and external fields.
  {II}. {M}agnetized spin-glass phase and the de {A}lmeida-{T}houless line}, J.
  Statist. Phys. \textbf{61} (1990), no.~5-6, 1069--1084. \MR{1083896
  (92d:82051)}

\bibitem{CCST86}
J.~T. Chayes, L.~Chayes, James~P. Sethna, and D.~J. Thouless, \emph{A mean
  field spin glass with short-range interactions}, Comm. Math. Phys.
  \textbf{106} (1986), no.~1, 41--89. \MR{853978 (88i:82054)}

\bibitem{CvH10}
Pavel Chigansky and Ramon van Handel, \emph{A complete solution to
  {B}lackwell's unique ergodicity problem for hidden {M}arkov chains}, Ann.
  Appl. Probab. \textbf{20} (2010), no.~6, 2318--2345. \MR{2759736
  (2011j:93118)}

\bibitem{Con72}
J.~P. Conze, \emph{Entropie d'un groupe ab\'elien de transformations}, Z.
  Wahrscheinlichkeitstheorie und Verw. Gebiete \textbf{25} (1972/73), 11--30.
  \MR{0335754 (49 \#534)}

\bibitem{CV12}
Loren Coquille and Yvan Velenik, \emph{A finite-volume version of
  {A}izenman-{H}iguchi theorem for the 2d {I}sing model}, Probab. Theory
  Related Fields \textbf{153} (2012), no.~1-2, 25--44. \MR{2925569}

\bibitem{CT06}
Thomas~M. Cover and Joy~A. Thomas, \emph{Elements of information theory},
  second ed., Wiley-Interscience [John Wiley \& Sons], Hoboken, NJ, 2006.
  \MR{2239987 (2007h:00002)}

\bibitem{CR11}
Dan Crisan and Boris Rozovski{\u\i} (eds.), \emph{The {O}xford handbook of
  nonlinear filtering}, Oxford University Press, Oxford, 2011. \MR{2882749
  (2012j:60003)}

\bibitem{DS89}
Jean-Dominique Deuschel and Daniel~W. Stroock, \emph{Large deviations}, Pure
  and Applied Mathematics, vol. 137, Academic Press Inc., Boston, MA, 1989.
  \MR{997938 (90h:60026)}

\bibitem{DGLM11}
Randal Douc, Elisabeth Gassiat, Benoit Landelle, and Eric Moulines,
  \emph{Forgetting of the initial distribution for nonergodic hidden {M}arkov
  chains}, Ann. Appl. Probab. \textbf{20} (2010), no.~5, 1638--1662.
  \MR{2724416 (2011k:93140)}

\bibitem{EKPS00}
William Evans, Claire Kenyon, Yuval Peres, and Leonard~J. Schulman,
  \emph{Broadcasting on trees and the {I}sing model}, Ann. Appl. Probab.
  \textbf{10} (2000), no.~2, 410--433. \MR{1768240 (2001g:60243)}

\bibitem{FP97}
R.~Fern{\'a}ndez and C.-E. Pfister, \emph{Global specifications and
  nonquasilocality of projections of {G}ibbs measures}, Ann. Probab.
  \textbf{25} (1997), no.~3, 1284--1315. \MR{1457620 (98h:60066)}

\bibitem{Fol73}
Hans F{\"o}llmer, \emph{On entropy and information gain in random fields}, Z.
  Wahrscheinlichkeitstheorie und Verw. Gebiete \textbf{26} (1973), 207--217.
  \MR{0348829 (50 \#1324)}

\bibitem{Fol80}
\bysame, \emph{On the global {M}arkov property}, Quantum fields---algebras,
  processes ({P}roc. {S}ympos., {U}niv. {B}ielefeld, {B}ielefeld, 1978),
  Springer, Vienna, 1980, pp.~293--302. \MR{601817 (82d:60191)}

\bibitem{Fol88}
\bysame, \emph{Random fields and diffusion processes}, \'{E}cole d'\'{E}t\'e de
  {P}robabilit\'es de {S}aint-{F}lour {XV}--{XVII}, 1985--87, Lecture Notes in
  Math., vol. 1362, Springer, Berlin, 1988, pp.~101--203. \MR{983373
  (90f:60099)}

\bibitem{FMS97}
Arnoldo Frigessi, Fabio Martinelli, and Julian Stander, \emph{Computational
  complexity of {M}arkov chain {M}onte {C}arlo methods for finite {M}arkov
  random fields}, Biometrika \textbf{84} (1997), no.~1, 1--18. \MR{1450188
  (98d:60093)}

\bibitem{Geo11}
Hans-Otto Georgii, \emph{Gibbs measures and phase transitions}, second ed., de
  Gruyter Studies in Mathematics, vol.~9, Walter de Gruyter \& Co., Berlin,
  2011. \MR{2807681 (2012d:82015)}

\bibitem{GHM01}
Hans-Otto Georgii, Olle H{\"a}ggstr{\"o}m, and Christian Maes, \emph{The random
  geometry of equilibrium phases}, Phase transitions and critical phenomena,
  {V}ol. 18, Phase Transit. Crit. Phenom., vol.~18, Academic Press, San Diego,
  CA, 2001, pp.~1--142. \MR{2014387 (2004h:82022)}

\bibitem{Gla03}
Eli Glasner, \emph{Ergodic theory via joinings}, Mathematical Surveys and
  Monographs, vol. 101, American Mathematical Society, Providence, RI, 2003.
  \MR{1958753 (2004c:37011)}

\bibitem{GKS90}
S.~Goldstein, R.~Kuik, and A.~G. Schlijper, \emph{Entropy and global {M}arkov
  properties}, Comm. Math. Phys. \textbf{126} (1990), no.~3, 469--482.
  \MR{1032869 (90m:82002)}

\bibitem{Gol80}
Sheldon Goldstein, \emph{Remarks on the global {M}arkov property}, Comm. Math.
  Phys. \textbf{74} (1980), no.~3, 223--234. \MR{578041 (83c:82006)}

\bibitem{GKLM89}
Sheldon Goldstein, Roelof Kuik, Joel~L. Lebowitz, and Christian Maes,
  \emph{From {PCA}s to equilibrium systems and back}, Comm. Math. Phys.
  \textbf{125} (1989), no.~1, 71--79. \MR{1017739 (91b:82031)}

\bibitem{Gri06}
Geoffrey Grimmett, \emph{The random-cluster model}, Grundlehren der
  Mathematischen Wissenschaften [Fundamental Principles of Mathematical
  Sciences], vol. 333, Springer-Verlag, Berlin, 2006. \MR{2243761
  (2007m:60295)}

\bibitem{Kun71}
Hiroshi Kunita, \emph{Asymptotic behavior of the nonlinear filtering errors of
  {M}arkov processes}, J. Multivariate Anal. \textbf{1} (1971), 365--393.
  \MR{0301812 (46 \#967)}

\bibitem{LMS90}
Joel~L. Lebowitz, Christian Maes, and Eugene~R. Speer, \emph{Statistical
  mechanics of probabilistic cellular automata}, J. Statist. Phys. \textbf{59}
  (1990), no.~1-2, 117--170. \MR{1049965 (91i:82012)}

\bibitem{Lig05}
Thomas~M. Liggett, \emph{Interacting particle systems}, Classics in
  Mathematics, Springer-Verlag, Berlin, 2005, Reprint of the 1985 original.
  \MR{2108619 (2006b:60003)}

\bibitem{Lig06}
\bysame, \emph{Conditional association and spin systems}, ALEA Lat. Am. J.
  Probab. Math. Stat. \textbf{1} (2006), 1--19. \MR{2235171 (2007b:60241)}

\bibitem{Lou04}
Pierre-Yves Louis, \emph{Ergodicity of {PCA}: equivalence between spatial and
  temporal mixing conditions}, Electron. Comm. Probab. \textbf{9} (2004),
  119--131 (electronic). \MR{2108858 (2005k:60158)}

\bibitem{Mat07}
J.~C. Mattingly, \emph{Ergodicity of dissipative {SPDE}s}, Lecture notes,
  \'Ecole d'\'et\'e de Probabilit\'es de Saint-Flour, July 8--21, 2007
  (unpublished), 2007.

\bibitem{MT93}
S.~P. Meyn and R.~L. Tweedie, \emph{Markov chains and stochastic stability},
  Communications and Control Engineering Series, Springer-Verlag London Ltd.,
  London, 1993. \MR{1287609 (95j:60103)}

\bibitem{MM09}
Marc M{\'e}zard and Andrea Montanari, \emph{Information, physics, and
  computation}, Oxford Graduate Texts, Oxford University Press, Oxford, 2009.
  \MR{2518205 (2010k:94019)}

\bibitem{MA13}
Andrea Montanari and Emmanuel Abbe, \emph{Conditional random fields, planted
  constraint satisfaction and entropy concentration}, Approximation,
  Randomization, and Combinatorial Optimization (P.~Raghavendra,
  S.~Raskhodnikova, K.~Jansen, and J.~Rolim, eds.), Lecture Notes in Computer
  Science, vol. 8096, Springer, 2013, pp.~332--346.

\bibitem{MNS12}
Elchanan Mossel, Joe Neeman, and Allan Sly, \emph{Reconstruction and estimation
  in the planted partition model}, Probab. Th. Rel. Fields (2014), to appear.

\bibitem{Nis07}
Hidetoshi Nishimori, \emph{Duality in finite-dimensional spin glasses}, J.
  Stat. Phys. \textbf{126} (2007), no.~4-5, 977--986. \MR{2311893
  (2008f:82041)}

\bibitem{RvH13}
Patrick Rebeschini and Ramon {van Handel}, \emph{Can local particle filters
  beat the curse of dimensionality?}, Ann. Appl. Probab. (2014), to appear.

\bibitem{RvH13b}
Patrick Rebeschini and Ramon van Handel, \emph{Comparison theorems for {G}ibbs
  measures}, J. Stat. Phys. \textbf{157} (2014), 234--281.

\bibitem{RY99}
Daniel Revuz and Marc Yor, \emph{Continuous martingales and {B}rownian motion},
  third ed., Grundlehren der Mathematischen Wissenschaften [Fundamental
  Principles of Mathematical Sciences], vol. 293, Springer-Verlag, Berlin,
  1999. \MR{1725357 (2000h:60050)}

\bibitem{Ste89}
{\L}ukasz Stettner, \emph{On invariant measures of filtering processes},
  Stochastic differential systems ({B}ad {H}onnef, 1988), Lecture Notes in
  Control and Inform. Sci., vol. 126, Springer, Berlin, 1989, pp.~279--292.
  \MR{1236074}

\bibitem{Stu10}
A.~M. Stuart, \emph{Inverse problems: a {B}ayesian perspective}, Acta Numer.
  \textbf{19} (2010), 451--559. \MR{2652785 (2011i:65093)}

\bibitem{TvH12}
Xin~Thomson Tong and Ramon van Handel, \emph{Ergodicity and stability of the
  conditional distributions of nondegenerate {M}arkov chains}, Ann. Appl.
  Probab. \textbf{22} (2012), no.~4, 1495--1540. \MR{2985168}

\bibitem{TvH13}
\bysame, \emph{Conditional ergodicity in infinite dimension}, Ann. Probab.
  \textbf{42} (2014), 2243--2313.

\bibitem{EFS93}
Aernout C.~D. van Enter, Roberto Fern{\'a}ndez, and Alan~D. Sokal,
  \emph{Regularity properties and pathologies of position-space
  renormalization-group transformations: scope and limitations of {G}ibbsian
  theory}, J. Statist. Phys. \textbf{72} (1993), no.~5-6, 879--1167.
  \MR{1241537 (94m:82012)}

\bibitem{vH08}
Ramon van Handel, \emph{Discrete time nonlinear filters with informative
  observations are stable}, Electron. Commun. Probab. \textbf{13} (2008),
  562--575. \MR{2461532 (2010a:60237)}

\bibitem{vH09}
\bysame, \emph{Observability and nonlinear filtering}, Probab. Theory Related
  Fields \textbf{145} (2009), no.~1-2, 35--74. \MR{2520121 (2010h:93123)}

\bibitem{vH09c}
\bysame, \emph{The stability of conditional {M}arkov processes and {M}arkov
  chains in random environments}, Ann. Probab. \textbf{37} (2009), no.~5,
  1876--1925. \MR{2561436 (2011a:60256)}

\bibitem{vH09b}
\bysame, \emph{Uniform observability of hidden {M}arkov models and filter
  stability for unstable signals}, Ann. Appl. Probab. \textbf{19} (2009),
  no.~3, 1172--1199. \MR{2537203 (2010j:60185)}

\bibitem{vH12}
\bysame, \emph{On the exchange of intersection and supremum of
  {$\sigma$}-fields in filtering theory}, Israel J. Math. \textbf{192} (2012),
  no.~2, 763--784. \MR{3009741}

\bibitem{vW83}
Heinrich von Weizs{\"a}cker, \emph{Exchanging the order of taking suprema and
  countable intersections of {$\sigma $}-algebras}, Ann. Inst. H. Poincar\'e
  Sect. B (N.S.) \textbf{19} (1983), no.~1, 91--100. \MR{699981 (85c:28001)}

\bibitem{Win03}
Gerhard Winkler, \emph{Image analysis, random fields and {M}arkov chain {M}onte
  {C}arlo methods}, second ed., Applications of Mathematics (New York),
  vol.~27, Springer-Verlag, Berlin, 2003. \MR{1950762 (2004c:94028)}

\end{thebibliography}

\providecommand{\bysame}{\leavevmode\hbox to3em{\hrulefill}\thinspace}
\providecommand{\MR}{\relax\ifhmode\unskip\space\fi MR }
\providecommand{\MRhref}[2]{%
  \href{http://www.ams.org/mathscinet-getitem?mr=#1}{#2}
}
\providecommand{\href}[2]{#2}


\end{document}